\documentclass[reqno]{amsart}

\usepackage{a4wide}
\usepackage{color}
\usepackage{mathrsfs}
\usepackage{mathtools}
\usepackage{amsmath}
\usepackage{amssymb}
\usepackage{nicefrac}
\usepackage{bbm}
\numberwithin{equation}{section}
\usepackage[colorlinks,citecolor=green,linkcolor=red]{hyperref}
\usepackage{url}

\usepackage[latin1]{inputenc}

\newcommand{\F}{\mathcal{F}}
\newcommand{\G}{\mathcal{G}}
\newcommand{\M}{{\sf M}}
\newcommand{\N}{\mathbb{N}}
\newcommand{\R}{\mathbb{R}}
\newcommand{\X}{{\rm X}}
\newcommand{\Y}{{\rm Y}}
\newcommand{\sfd}{{\sf d}}
\renewcommand{\d}{{\mathrm d}}
\newcommand{\1}{\mathbbm 1}
\newcommand{\eps}{\varepsilon}
\newcommand{\limi}{\varliminf}
\newcommand{\lims}{\varlimsup}
\newcommand{\LIP}{{\rm LIP}}
\newcommand{\Lip}{{\rm Lip}}
\newcommand{\BV}{{\rm BV}}
\renewcommand{\H}{{\mathcal H}}
\newcommand{\fr}{\penalty-20\null\hfill\(\blacksquare\)}

\newtheorem{theorem}{Theorem}[section]
\newtheorem{corollary}[theorem]{Corollary}
\newtheorem{lemma}[theorem]{Lemma}
\newtheorem{proposition}[theorem]{Proposition}
\newtheorem{definition}[theorem]{Definition}

\newtheorem{remark}[theorem]{Remark}

\linespread{1.15}
\setcounter{tocdepth}{2}

\makeatletter
\@namedef{subjclassname@2020}{\textup{2020} Mathematics Subject Classification}
\makeatother

\title[Decomposition of integral metric currents]
{Decomposition of integral metric currents}
\author{Paolo Bonicatto}
\address[P.\ Bonicatto]{Mathematics Institute, University of Warwick,
Zeeman Building, CV4 7HP Coventry, UK}
\email{Paolo.Bonicatto@warwick.ac.uk}

\author{Giacomo Del Nin}
\address[G.\ Del Nin]{Mathematics Institute, University of Warwick,
Zeeman Building, CV4 7HP Coventry, UK}
\email{Giacomo.Del-Nin@warwick.ac.uk}

\author{Enrico Pasqualetto}
\address[E.\ Pasqualetto]{Scuola Normale Superiore, Piazza dei Cavalieri 7,
56126 Pisa, Italy}
\email{enrico.pasqualetto@sns.it}

\begin{document}
\date{\today} 
\keywords{Metric currents, integral currents, indecomposable currents, Lipschitz curves}
\subjclass[2020]{49Q15, 28A75}
\begin{abstract}
In the setting of complete metric spaces, we prove that integral
currents can be decomposed as a sum of indecomposable components.
In the special case of one-dimensional integral currents, we also show that
the indecomposable ones are exactly those associated with injective Lipschitz
curves or injective Lipschitz loops, therefore extending Federer's characterisation to metric spaces. Moreover, some applications of our
main results will be discussed.
\end{abstract}
\maketitle
\tableofcontents
\section{Introduction}
Currents have been widely used in geometric measure theory and calculus of variations as a weak setting to formulate a variety of geometric problems, especially within the theory of \textit{normal} and \textit{integral} currents developed by Federer and Fleming \cite{federer-fleming}. While the initial theory was set in Euclidean spaces, Ambrosio and Kirchheim \cite{AK} extended it to \textit{metric currents}, defined on complete metric spaces. Since then there has been a significant effort to understand which results, previously proven for Euclidean currents, could be extended to the metric setting. Among these we mention: Smirnov's result on the decomposition of Euclidean normal $1$-currents into solenoidal charges \cite{smirnov}, later extended by Paolini and Stepanov to the metric setting \cite{PS1,PS2}; isoperimetric inequalities, first proved by Federer and Fleming \cite{federer-fleming} and then by Almgren in a sharp form \cite{almgren}, extended by Wenger to metric spaces admitting a cone-type inequality \cite{Wenger1,Wenger2}.

Two useful results in the Euclidean case concern the structure of integral currents, which is particularly nice:
\begin{itemize}
    \item[\( (\rm i)\)] Every integral $k$-current $T$ admits a decomposition into countably many \textit{indecomposable} components $T_n$, with
    \[
    T=\sum_n T_n,\qquad\qquad  {\sf N}(T)=\sum_n{\sf N}(T_n),
    \]
    where ${\sf N}(T)={\sf M}(T)+{\sf M}(\partial T)$ is the normal mass;
    \item[\((\rm ii)\)] In the particular case of $1$-currents, the indecomposable ones are precisely given by (currents associated with) Lipschitz curves which are either injective or injective loops, and as a consequence every integral $1$-current is the countable sum of such curves. 
\end{itemize}
These results are stated in Federer's book \cite[4.2.25]{federer}, and a hint of a proof is provided. 
A detailed proof of these results was however missing for a long time, and it appears that only recently the proof of the structure of integral $1$-currents in the Euclidean case has been written down (see \cite[Proposition 1.3.16]{marchese} and \cite[Theorem 2.5]{CGM}), even though the characterisation of the indecomposable ones as injective Lipschitz curves or loops seems to be missing. Up to our knowledge, the proof of the decomposition in indecomposable components  was instead never explicitly written out, even in the Euclidean setting.

The aim of the present work is to rigorously prove both results in the full generality of the Ambrosio--Kirchheim setting: we will work in a complete metric space \((\X,\sfd)\), with the assumption that the cardinality of every set is an Ulam number. 
The possibility of proving both results in a complete metric space without any further geometric assumption on the space stems from the fact that an integral current is already given, and we are just looking for a suitable decomposition \textit{within} the current itself. More precisely, the strategy is as follows: we isometrically embed \(\X\) into a Banach space; here we have at our disposal the isoperimetric inequality, which is the main tool allowing us to prove the decomposition; then we just observe that the decomposition lives entirely within the current itself, and thus we can isometrically transport it back to the original metric space. We thus bypass the use of a polyhedral deformation theorem in metric spaces (whose statement would not even be clear), instead relying ultimately on the Euclidean one.

Concerning \( (\rm i)\) (see Theorem \ref{thm:decomp}) we present two proofs in Sections \ref{sec:decomp} and \ref{sec:alternative_decomp}. For the first proof we take inspiration from \cite{ACMM}, where the case of $d$-currents in $\R^d$ has been essentially proven within the theory of indecomposable finite perimeter sets. For the second proof we stay closer to the original suggestion by Federer, and prove that a greedy algorithm is successful: we inductively remove from the current an indecomposable component with maximal mass, and show that the process ends in countably many steps. Regarding this strategy we are thankful to Giada Franz, whose notes \cite{franz} were an inspiration to implement a similar strategy for currents.

Concerning \( (\rm ii)\) (see Theorem \ref{thm:opt_repr_1-curr}), an $\varepsilon$-approximation of integral $1$-currents with Lipschitz curves was already proven in length spaces by Wenger in the boundaryless case \cite{wenger_inv} and Ambrosio--Wenger in the general case \cite{AW11}. 
We take these works as a starting point and we first show that one can remove the $\varepsilon$ by means of an Arzel\`{a}--Ascoli compactness argument; then, with the aid of the Decomposition Theorem \ref{thm:decomp}, we show how it is possible to obtain injectivity, and this yields the desired description of indecomposable 1-currents.

In the final section we present some applications of the decomposition theorems. The first one, Proposition \ref{prop:ABC}, is an extension to metric spaces of a lemma by Alberti--Bianchini--Crippa \cite[Lemma 2.14]{ABC2} regarding boundaryless currents with support in a simple curve. The second one, Corollary \ref{cor:simple_planar}, is an alternative proof of the characterisation of planar simple sets (defined in \cite{ACMM}) as those bounded by Jordan curves. The proof of this result relies on Proposition \ref{prop:simple_iff}, namely the observation that a finite perimeter set of finite measure is simple if and only if the current associated with its boundary is indecomposable.

\subsubsection*{Structure of the paper} The paper is organised as follows: we use Section \ref{sec:reminder} to collect useful, previously known results on metric currents. Such section contains also a detailed explanation of our notation and should be kept as a reference, while reading the paper. Section \ref{sec:decomp} contains the first proof of the decomposition result for integral $k$-currents (Theorem \ref{thm:decomp}), based on a simple variational argument, while Section \ref{sec:alternative_decomp} is devoted to the presentation of an alternative, independent proof of the same result. Finally, in Section \ref{sec:one_indec} we present the characterisation of indecomposable 1-currents (Theorem \ref{thm:opt_repr_1-curr}) and we conclude the paper with Section \ref{sec:applications}, which contains some applications of our main results.

\subsection*{Acknowledgements} P.B.\ and G.D.N.\ have received funding from the European Research Council (ERC) under the European Union's Horizon 2020 research and innovation programme, grant agreement No 757254 (SINGULARITY). E.P.\ has been supported by the Academy of Finland (project number 314789) and by the Balzan project led by Prof.\ L.\ Ambrosio. P.B.\ wishes to thank N.A.\ Gusev for interesting discussions on currents in Euclidean spaces. G.D.N. wishes to thank G.\ Alberti for some discussions on the topic. The authors are grateful to Prof.\ S.\ Wenger for useful comments on a first draft of the paper and for suggesting the study of indecomposable $1$-currents in the metric setting.
\section{Reminder on metric currents}\label{sec:reminder}
Aim of this section is to briefly recall the theory of metric currents,
which was introduced by L.\ Ambrosio and B.\ Kirchheim in \cite{AK}.
We only discuss those definitions and results that are sufficient for
our purposes. In accordance with \cite{AK}, in this paper we will always
assume that the cardinality of any set is an Ulam number, which is consistent
with the standard ZFC set theory. This guarantees that any finite Borel measure
on a complete metric space has a separable support and is concentrated on a
\(\sigma\)-compact set.
\subsection{Main classes of metric currents}
Let \((\X,\sfd)\) be a complete metric space. Given any \(k\in\N\),
we denote by \(\mathcal D^k(\X)\) the family of all \textbf{metric
\(k\)-dimensional differential forms} on \(\X\), namely
\[
\mathcal D^k(\X)\coloneqq\LIP_b(\X)\times\LIP(\X)^k,
\]
where \(\LIP(\X)\) stands for the space of all Lipschitz, real-valued functions
on \(\X\), while \(\LIP_b(\X)\) is the set of bounded functions in \(\LIP(\X)\).
We denote by \(\Lip(f)\) the Lipschitz constant of \(f\in\LIP(\X)\).
\begin{definition}[Metric current \cite{AK}]\label{def:curr}
Let \((\X,\sfd)\) be a complete metric space. Let \(k\in\N\) be given.
Then a \textbf{metric \(k\)-current} in \(\X\) is a multilinear functional
\(T\colon\mathcal D^k(\X)\to\R\) such that:
\begin{itemize}
\item[\( (\rm i)\)] If \(f\in\LIP_b(\X)\) and \(\pi^n_i,\pi_i\in\LIP(\X)\)
satisfy \(\sup_{n\in\N}\Lip(\pi^n_i)<+\infty\) and \(\lim_n\pi^n_i(x)=\pi_i(x)\)
for every \(i=1,\ldots,k\) and \(x\in\X\), then it holds
\(T(f,\pi_1,\ldots,\pi_k)=\lim_n T(f,\pi^n_1,\ldots,\pi^n_k)\).
\item[\( (\rm ii)\)] Given any \((f,\pi_1,\ldots,\pi_k)\in\mathcal D^k(\X)\),
it holds \(T(f,\pi_1,\ldots,\pi_k)=0\) whenever there exists an index
\(i=1,\ldots,k\) such that the function \(\pi_i\) is constant on some
neighbourhood of \(\{f\neq 0\}\).
\item[\( (\rm iii)\)] There exists a finite Borel measure \(\mu\) on \(\X\)
such that
\begin{equation}\label{eq:finite_mass}
\big|T(f,\pi_1,\ldots,\pi_k)\big|\leq\prod_{i=1}^k\Lip(\pi_i)\int|f|\,\d\mu,
\quad\text{ for every }(f,\pi_1,\ldots,\pi_k)\in\mathcal D^k(\X).
\end{equation}
\end{itemize}
\end{definition}
The minimal measure \(\mu\) satisfying \eqref{eq:finite_mass} is called the
\textbf{mass measure} of \(T\) and denoted by \(\|T\|\). The \textbf{total mass}
of \(T\) is given by \({\sf M}(T)\coloneqq\|T\|(\X)\). The space of all
metric \(k\)-currents in \(\X\) is denoted by \(\mathscr M_k(\X)\).
It holds that \(\big(\mathscr M_k(\X),{\sf M}\big)\) is a Banach space.
The \textbf{support} \({\rm spt}(T)\) of a given current
\(T\in\mathscr M_k(\X)\) is defined as the support of its mass measure \(\|T\|\).
\medskip

The \textbf{boundary} of \(T\in\mathscr M_k(\X)\) is the functional
\(\partial T\colon\mathcal D^{k-1}(\X)\to\R\) defined by
\[
\partial T(f,\pi_1,\ldots,\pi_{k-1})\coloneqq T(1,f,\pi_1,\ldots,\pi_{k-1}),
\quad\text{ for every }(f,\pi_1,\ldots,\pi_{k-1})\in\mathcal D^{k-1}(\X).
\]
We say that \(T\) is \textbf{normal} provided \(\partial T\) is a metric
\((k-1)\)-current in \(\X\). We denote by \(\mathscr N_k(\X)\) the space
of all normal \(k\)-currents in \(\X\). Observe that one has
\(\partial(\partial T)=0\) for every \(T\in\mathscr N_k(\X)\). It holds that
\(\mathscr N_k(\X)\) is a Banach space if endowed with the \textbf{normal mass}
\(\sf N\), which is given by
\[
{\sf N}(T)\coloneqq{\sf M}(T)+{\sf M}(\partial T),
\quad\text{ for every }T\in\mathscr N_k(\X).
\]
Moreover, it follows from \eqref{eq:finite_mass} that any current
\(T\in\mathscr M_k(\X)\) can be uniquely extended to a multilinear
real-valued functional (still denoted by \(T\)) on the \((k+1)\)-tuples
\(L^1(\X,\|T\|)\times\LIP(\X)^k\) in such a way that the
inequality in \eqref{eq:finite_mass} is still valid.
Therefore, given any metric \(k\)-current
\(T\in\mathscr M_k(\X)\) and any Borel set \(E\subseteq\X\), we can define
the \textbf{restriction} \(T\llcorner E\in\mathscr M_k(\X)\) as
\[
T\llcorner E(f,\pi_1,\ldots,\pi_k)\coloneqq T(\1_E f,\pi_1,\ldots,\pi_k),
\quad\text{ for every }(f,\pi_1,\ldots,\pi_k)\in\mathcal D^k(\X).
\]
Another important operation is the \textbf{pushforward}:
given \((\X,\sfd_\X)\), \((\Y,\sfd_\Y)\) complete metric spaces
and \(\varphi\in\LIP(\X;\Y)\), we associate to any \(T\in\mathscr M_k(\X)\)
the current \(\varphi_\# T\in\mathscr M_k(\Y)\) that is given by
\[
\varphi_\# T(f,\pi_1,\ldots,\pi_k)\coloneqq
T(f\circ\varphi,\pi_1\circ\varphi,\ldots,\pi_k\circ\varphi),
\quad\text{ for every }(f,\pi_1,\ldots,\pi_k)\in\mathcal D^k(\Y).
\]
Note that pushforward and boundary commute, namely
\(\partial(\varphi_\# T)=\varphi_\#(\partial T)\) for all \(T\in\mathscr M_k(\X)\). The mass measure also behaves well under pushforward: \(\|\varphi_\# T\|\leq \Lip(\varphi)^k \varphi_\# \|T\|\) for all \(T\in\mathscr M_k(\X)\) and any Lipschitz map \(\varphi\in \LIP(\X;\Y)\), where $\varphi_\#\|T\|$ denotes the usual pushforward of measures.
\medskip

We say that a current \(T\in\mathscr M_k(\X)\) is \textbf{rectifiable} provided
\(\|T\|\) is concentrated on a countably \(\mathcal H^k\)-rectifiable set and
vanishes on \(\mathcal H^k\)-negligible Borel sets, where \(\mathcal H^k\)
stands for the \(k\)-dimensional Hausdorff measure on \((\X,\sfd)\). We denote
by \(\mathscr R_k(\X)\) the space of all rectifiable \(k\)-currents in \(\X\).
In addition, we say that a rectifiable current \(T\in\mathscr R_k(\X)\) is
\textbf{integer-rectifiable} provided the following property holds: given a
Lipschitz map \(\varphi\in\LIP(\X;\R^k)\) and an open set \(\Omega\subseteq\X\),
there exists a density function \(\theta\in L^1(\R^k,\mathbb Z)\) such that
\[
\varphi_\#(T\llcorner\Omega)(f,\pi_1,\ldots,\pi_k)=
\int\theta f\,{\rm det}\bigg(\frac{\partial\pi_i}{\partial x_j}\bigg)_{i,j}
\,\d\mathcal L^k,\quad\text{ for every }(f,\pi_1,\ldots,\pi_k)\in\mathcal D^k(\R^k).
\]
The existence of the partial derivatives \(\frac{\partial\pi_i}{\partial x_j}\)
is granted by the classical Rademacher's theorem. Finally, we say that
a metric \(k\)-current is \textbf{integral} provided it is both
integer-rectifiable and normal. We denote by \(\mathscr I_k(\X)\)
the space of all integral \(k\)-currents in \(\X\), which is a
\(\mathbb Z\)-module.
\medskip

We recall the following definition, which is given in
\cite[Definition 4.2.25]{federer} for Euclidean currents.
\begin{definition}[Indecomposable current]\label{def:indec_current}
Let \((\X,\sfd)\) be a complete metric space. Then a given integral current
\(T\in\mathscr I_k(\X)\) is said to be \textbf{decomposable} provided there
exists a couple of non-zero integral currents \(R,S\in\mathscr I_k(\X)\) such
that \(T=R+S\) and \({\sf N}(T)={\sf N}(R)+{\sf N}(S)\). An integral current
which is not decomposable is called \textbf{indecomposable}.
\end{definition}
\subsection{Convergence of metric currents}
There are two important notions of convergence for integral metric currents,
as we are going to recall in this section.
\medskip

Let \((\X,\sfd)\) be a complete metric space.
Let \(T\in\mathscr M_k(\X)\) and \((T_n)_{n\in\N}\subseteq\mathscr M_k(\X)\)
be given. Then we say that \(T_n\) \textbf{weakly converges} to \(T\) as
\(n\to\infty\) (briefly, \(T_n\rightharpoonup T\) as \(n\to\infty\)) provided
it holds
\[
T(f,\pi_1,\ldots,\pi_k)=\lim_{n\to\infty}T_n(f,\pi_1,\ldots,\pi_k),
\quad\text{ for every }(f,\pi_1,\ldots,\pi_k)\in\mathcal D^k(\X).
\]
It turns out that, given any open set \(\Omega\subseteq\X\), the functional
\(\mathscr M_k(\X)\ni T\mapsto\|T\|(\Omega)\in[0,\infty)\)
is lower semicontinuous when the domain is equipped with the topology
of the weak convergence. In particular, it holds that
\(\mathscr M_k(\X)\ni T\mapsto{\sf M}(T)\) is lower semicontinuous.
Moreover, the boundary operator is continuous with respect to the
weak convergence of normal currents, namely it holds
\begin{equation}\label{eq:cont_bdry}
T\in\mathscr N_k(\X),\;\;\;(T_n)_{n\in\N}\subseteq\mathscr N_k(\X),\;\;\;
T_n\rightharpoonup T\;\text{ as }n\to\infty\quad\Longrightarrow\quad
\partial T_n\rightharpoonup\partial T\;\text{ as }n\to\infty.
\end{equation}
Given a sequence \((T_n)_{n\in\N}\subseteq\mathscr M_k(\X)\), we will
denote by \(\sum_{n\in\N}T_n\in\mathscr M_k(\X)\) the weak limit of
the partial sums \(\sum_{n=1}^N T_n\) as \(N\to\infty\), whenever
such limit exists. No ambiguity should occur, as in this paper we will
just consider weakly converging series. Observe that
if \(T=\sum_{n\in\N}T_n\) exists, then for any \(\Omega\subseteq\X\) open
we have \(\|T\|(\Omega)\leq\limi_N\big\|\sum_{n=1}^N T_n\big\|(\Omega)
\leq\lim_N\sum_{n=1}^N\|T_n\|(\Omega)=\sum_{n\in\N}\|T_n\|(\Omega)\).
We thus conclude that
\begin{equation}\label{eq:mass_subadd}
\big\|{\textstyle\sum_{n\in\N}}T_n\big\|\leq{\textstyle\sum_{n\in\N}}\|T_n\|,
\quad\text{ whenever }{\textstyle\sum_{n\in\N}{\sf M}(T_n)<\infty}
\text{ (and thus }{\textstyle\sum_{n\in\N}}T_n\text{ exists),}
\end{equation}
thanks to the outer regularity of the measures \(\|T\|\)
and \(\sum_{n\in\N}\|T_n\|\).
Given that the boundary operator \(\partial\) is linear, it also holds that
\begin{equation}\label{eq:cont_bdry_series}
T\in\mathscr N_k(\X),\;\;\;(T_n)_{n\in\N}\subseteq\mathscr N_k(\X),\;\;\;
T=\sum_{n\in\N}T_n\quad\Longrightarrow\quad\partial T=\sum_{n\in\N}\partial T_n.
\end{equation}
\begin{remark}{\rm
In general, the element \(\sum_{n\in\N}T_n\) depends on the ordering
of the currents \(T_n\) in the sequence. However, this is not the case
under the additional assumption that \(C\coloneqq\sum_{n\in\N}{\sf M}(T_n)\)
is finite, since it grants that
\[
\sum_{n\in\N}\big|T_n(f,\pi_1,\ldots,\pi_k)\big|\overset{\eqref{eq:finite_mass}}
\leq C\sup_\X|f|\prod_{i=1}^k\Lip(\pi_i),\quad\text{ for every }
(f,\pi_1,\ldots,\pi_k)\in\mathcal D^k(\X).
\]
In light of this observation, we will occasionally consider series of
the form \(\sum_{n\in I}T_n\), where \(\{T_n\}_{n\in I}\) is a family
of currents that is indexed over a countable set \(I\) whose ordering
is not specified.
\fr}\end{remark}
\begin{proposition}\label{prop:alt_sum_N}
Let \((\X,\sfd)\) be a complete metric space. Let \(T\in\mathscr N_k(\X)\) and
\((T_n)_{n\in\N}\subseteq\mathscr N_k(\X)\) be such that \(T=\sum_{n\in\N}T_n\).
Then it holds that \({\sf N}(T)=\sum_{n\in\N}{\sf N}(T_n)\) if and only if
\begin{equation}\label{eq:alt_sum_N_claim}
\|T\|=\sum_{n\in\N}\|T_n\|,\qquad\|\partial T\|=\sum_{n\in\N}\|\partial T_n\|.
\end{equation}
\end{proposition}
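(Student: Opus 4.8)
The plan is to prove the two implications separately, with the forward direction being the substantive one. For the easy direction \((\Leftarrow)\), I would simply evaluate the two measure identities in \eqref{eq:alt_sum_N_claim} at the whole space \(\X\): this yields \({\sf M}(T)=\sum_{n}{\sf M}(T_n)\) and \({\sf M}(\partial T)=\sum_{n}{\sf M}(\partial T_n)\), and summing these two scalar identities gives \({\sf N}(T)=\sum_{n}{\sf N}(T_n)\) directly from the definition of the normal mass. No convergence issue arises here since all quantities are nonnegative.

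For the forward direction \((\Rightarrow)\), the starting observation is that \({\sf N}(T)<\infty\) (because \(T\) is normal, so both \(\|T\|\) and \(\|\partial T\|\) are finite measures), and hence the hypothesis \({\sf N}(T)=\sum_{n}{\sf N}(T_n)\) forces both \(\sum_{n}{\sf M}(T_n)<\infty\) and \(\sum_{n}{\sf M}(\partial T_n)<\infty\). These finiteness conditions are exactly what is needed to invoke the mass subadditivity \eqref{eq:mass_subadd}, both for the currents themselves and---after recalling from \eqref{eq:cont_bdry_series} that \(\partial T=\sum_{n}\partial T_n\)---for their boundaries. This produces the two measure inequalities \(\|T\|\leq\sum_{n}\|T_n\|\) and \(\|\partial T\|\leq\sum_{n}\|\partial T_n\|\), holding as an inequality between finite Borel measures; the whole content of the proposition is to upgrade these to equalities.

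To close the gap, I would introduce the two nonnegative finite Borel measures \(\mu\coloneqq\sum_{n}\|T_n\|-\|T\|\) and \(\nu\coloneqq\sum_{n}\|\partial T_n\|-\|\partial T\|\), which are well-defined differences precisely because the minuends are finite and dominate the subtrahends by the previous step. Evaluating at \(\X\) and adding gives \(\mu(\X)+\nu(\X)=\sum_{n}{\sf N}(T_n)-{\sf N}(T)\), which vanishes exactly by hypothesis. Since \(\mu\) and \(\nu\) are nonnegative measures whose total masses sum to zero, each has zero total mass, hence each is the null measure; this is precisely the pair of identities \eqref{eq:alt_sum_N_claim}.

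The main (and essentially only) subtlety lies in the fact that mass is merely subadditive---one cannot expect a reverse inequality for individual currents---so neither \(\|T\|=\sum_{n}\|T_n\|\) nor \(\|\partial T\|=\sum_{n}\|\partial T_n\|\) can be extracted in isolation. The key is to treat the two inequalities simultaneously: the normal-mass hypothesis controls only the combined deficit \(\mu(\X)+\nu(\X)\), and it is the addition of the two nonnegative budgets that forces each of them to collapse to zero.
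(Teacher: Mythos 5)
Your proof is correct and follows essentially the same route as the paper: both arguments derive the two measure inequalities from \eqref{eq:mass_subadd} and \eqref{eq:cont_bdry_series} and then observe that the hypothesis \({\sf N}(T)=\sum_n{\sf N}(T_n)\) forces the total deficit to vanish, hence each deficit to vanish. The paper phrases this by testing on an arbitrary Borel set \(E\) and its complement, while you package the same observation into the two nonnegative deficit measures \(\mu\) and \(\nu\); the substance is identical.
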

\begin{proof}
Sufficiency is obvious. To prove necessity, assume
\({\sf N}(T)=\sum_{n\in\N}{\sf N}(T_n)\). We know from \eqref{eq:mass_subadd}
and \eqref{eq:cont_bdry_series} that \(\|T\|\leq\sum_{n\in\N}\|T_n\|\) and
\(\|\partial T\|\leq\sum_{n\in\N}\|\partial T_n\|\). For any \(E\subseteq\X\)
Borel, one has
\[\begin{split}
{\sf N}(T)&=\|T\|(\X)+\|\partial T\|(\X)=\|T\|(E)+\|T\|(\X\setminus E)
+\|\partial T\|(E)+\|\partial T\|(\X\setminus E)\\
&\leq\sum_{n\in\N}\|T_n\|(E)+\|T_n\|(\X\setminus E)+\|\partial T_n\|(E)
+\|\partial T_n\|(\X\setminus E)\\
&=\sum_{n\in\N}\|T_n\|(\X)+\|\partial T_n\|(\X)=
\sum_{n\in\N}{\sf N}(T_n)={\sf N}(T),
\end{split}\]
which forces the identities \(\|T\|(E)=\sum_{n\in\N}\|T_n\|(E)\) and
\(\|\partial T\|(E)=\sum_{n\in\N}\|\partial T_n\|(E)\). Thanks to the
arbitrariness of \(E\), we deduce that \eqref{eq:alt_sum_N_claim} is
verified, yielding the sought conclusion.
\end{proof}
On integral currents, another (more geometric) notion of convergence is
given by the \textbf{flat norm}:
\[
{\sf F}(T)\coloneqq\inf\Big\{{\sf M}(R)+{\sf M}(S)\;\Big|\;
R\in\mathscr I_k(\X),\,S\in\mathscr I_{k+1}(\X),\,T=R+\partial S\Big\},
\quad\text{ for every }T\in\mathscr I_k(\X).
\]
Observe that \({\sf F}(T)\leq{\sf M}(T)\leq{\sf N}(T)\) holds for every
\(T\in\mathscr I_k(\X)\).
\medskip

In the classical theory of currents in Euclidean spaces, a fundamental
result states that the weak convergence of currents -- when restricted to
integral currents having uniformly bounded normal mass -- is metrised
by the flat norm. This theorem has been generalised by Wenger \cite{Wenger2}
to the framework of \emph{quasi-convex}, complete metric spaces admitting
\emph{local cone-type inequalities} (a class of spaces which contains,
for instance, all Banach spaces): 
\begin{theorem}[Weak convergence and flat norm \cite{Wenger2}]
\label{thm:weak_conv_and_flat_norm}
Let \(\big(\X,\|\cdot\|\big)\) be a Banach space. Consider a
sequence \((T_n)_{n\in\N}\subseteq\mathscr I_k(\X)\) satisfying
\(\sup_n{\sf N}(T_n)<\infty\). Then for any \(T\in\mathscr I_k(\X)\)
it holds that
\[
T_n\rightharpoonup T\;\text{ as }n\to\infty\quad\Longleftrightarrow\quad
\lim_{n\to\infty}{\sf F}(T_n-T)=0.
\]
\end{theorem}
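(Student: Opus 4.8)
The statement is a theorem of Wenger \cite{Wenger2}, and I sketch the way I would reconstruct its proof. It splits naturally into the elementary implication ``flat convergence implies weak convergence'', which I would prove directly and which needs no bound on the normal masses, and the deep converse, for which I would set up a compactness argument and then borrow Wenger's hard analytic input.

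For the elementary direction, assume \({\sf F}(T_n-T)\to 0\) and fix a form \((f,\pi_1,\ldots,\pi_k)\in\mathcal D^k(\X)\); write \(M\coloneqq\sup_\X|f|\), \(L\coloneqq\Lip(f)\) and \(P\coloneqq\prod_{i=1}^k\Lip(\pi_i)\). For each \(n\) I would select, by definition of \({\sf F}\), a nearly optimal decomposition \(T_n-T=R_n+\partial S_n\) with \(R_n\in\mathscr I_k(\X)\), \(S_n\in\mathscr I_{k+1}(\X)\) and \({\sf M}(R_n)+{\sf M}(S_n)\leq{\sf F}(T_n-T)+\tfrac1n\). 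Applying \eqref{eq:finite_mass} to \(R_n\), and for the boundary term using the identity \(\partial S_n(f,\pi_1,\ldots,\pi_k)=S_n(1,f,\pi_1,\ldots,\pi_k)\) together with \eqref{eq:finite_mass} for the \((k+1)\)-current \(S_n\) (note \(\int|1|\,\d\|S_n\|={\sf M}(S_n)\)), I obtain
\[
\big|(T_n-T)(f,\pi_1,\ldots,\pi_k)\big|\leq P\,M\,{\sf M}(R_n)+P\,L\,{\sf M}(S_n)\leq (M+L)\,P\,\big({\sf F}(T_n-T)+\tfrac1n\big),
\]
which tends to \(0\). Since the form was arbitrary, \(T_n\rightharpoonup T\).

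For the converse, set \(U_n\coloneqq T_n-T\), so that \(U_n\rightharpoonup 0\) by linearity of weak convergence and \({\sf N}(U_n)\leq\sup_m{\sf N}(T_m)+{\sf N}(T)<\infty\); the goal becomes \({\sf F}(U_n)\to 0\). I would argue by contradiction: if \({\sf F}(U_n)\not\to 0\), there are \(\eps>0\) and a subsequence with \({\sf F}(U_{n_j})\geq\eps\). Invoking Wenger's compactness theorem for integral currents of uniformly bounded normal mass in a Banach space, I would extract a further subsequence converging \emph{in the flat norm} to some \(U_\infty\in\mathscr I_k(\X)\). By the elementary implication just proved, flat convergence forces weak convergence, so this sub-subsequence converges weakly to \(U_\infty\); but it also converges weakly to \(0\), and weak limits are unique since a current is determined by its action on \(\mathcal D^k(\X)\). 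Hence \(U_\infty=0\) and \({\sf F}(U_{n_j})\to{\sf F}(0)=0\) along the sub-subsequence, contradicting \({\sf F}(U_{n_j})\geq\eps\).

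The main obstacle is exactly the compactness input in the previous paragraph, namely the precompactness with respect to \({\sf F}\) of families of integral currents with uniformly bounded normal mass: this is the genuinely non-elementary part, it is where the hypothesis \(\sup_n{\sf N}(T_n)<\infty\) and the Banach (more generally, cone-type-inequality) structure of \(\X\) enter through the isoperimetric inequality, and I would simply cite \cite{Wenger2} for it rather than reprove it. A subtlety I would have to address before applying the compactness theorem is the equi-tightness of the masses \(\|U_n\|\) and \(\|\partial U_n\|\), which is needed to run such a statement in a space that is not locally compact.
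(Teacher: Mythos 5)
First, note that the paper does not prove this statement at all: Theorem \ref{thm:weak_conv_and_flat_norm} is imported verbatim from \cite{Wenger2}, so there is no internal proof to compare yours against. Your first implication (flat convergence implies weak convergence) is correct and complete: the estimate \(|\partial S_n(f,\pi_1,\ldots,\pi_k)|=|S_n(1,f,\pi_1,\ldots,\pi_k)|\leq\Lip(f)\prod_i\Lip(\pi_i)\,{\sf M}(S_n)\) together with \eqref{eq:finite_mass} for \(R_n\) is exactly the standard argument, and indeed needs no bound on \({\sf N}(T_n)\).

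The converse direction, however, contains a genuine gap rather than a deferred technicality. Your contradiction argument hinges on ``precompactness with respect to \({\sf F}\) of families of integral currents with uniformly bounded normal mass'', but no such statement holds in an infinite-dimensional Banach space, and none is available in \cite{Wenger2} to cite. Concretely, in \(\ell^2\) take \(T_n\) to be unit circles lying in mutually orthogonal \(2\)-planes and centred at points escaping to infinity: then \(\sup_n{\sf N}(T_n)<\infty\), yet a projection-and-slicing argument shows \({\sf F}(T_n-T_m)\geq c>0\) for all \(n\neq m\), so the family has no \({\sf F}\)-Cauchy subsequence. The Ambrosio--Kirchheim compactness and closure theorems require equi-tightness of the mass measures precisely to rule this out, and equi-tightness does \emph{not} follow from \(\sup_n{\sf N}(U_n)<\infty\) together with \(U_n\rightharpoonup 0\) by any soft argument --- extracting it is essentially equivalent to the theorem you are trying to prove. (This is why, in the proof of Theorem \ref{thm:decomp}, the authors are careful to obtain tightness from the domination \(\|T^n_i\|\leq\|T\|\) before invoking compactness.) Wenger's actual proof of the hard implication is a direct quantitative argument: roughly, an integral current with \({\sf N}(T)\leq M\) and \({\sf F}(T)\geq\delta\) must, via the isoperimetric inequality and a decomposition into pieces of controlled diameter, concentrate a definite amount of mass near finitely many points, and this concentration is then shown to be incompatible with weak convergence to zero. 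So the structure of the difficult direction is not ``compactness plus uniqueness of weak limits''; if you want to keep your write-up honest, you should either reproduce that direct argument or cite the equivalence itself as a black box, as the paper does, rather than route it through a compactness statement that is false in this generality.
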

\subsection{Currentification of Lipschitz curves}
Let \((\X,\sfd)\) be a complete metric space. Then each Lipschitz curve
\(\gamma\colon[a,b]\to\X\) can be naturally associated with a metric
\(1\)-current \([\![\gamma]\!]\) as follows:
\[
[\![\gamma]\!](f,\pi)\coloneqq\int_a^b f(\gamma(t))(\pi\circ\gamma)'(t)\,\d t,
\quad\text{ for every }(f,\pi)\in\mathcal D^1(\X).
\]
We call \([\![\gamma]\!]\) the \textbf{currentification} of \(\gamma\).
It holds that \([\![\gamma]\!]\in\mathscr I_1(\X)\) and \(\big\|[\![\gamma]\!]\big\| \le \gamma_\#\big(|\dot\gamma|\,\mathcal L^1|_{[a,b]}\big)\),
where \(|\dot\gamma|\) is the metric speed of \(\gamma\). If $\gamma$ is injective, then  \(\|[\![\gamma]\!]\|= \H^{1}|_{\gamma([a,b])}\) and
\({\sf M}([\![\gamma]\!])\) coincides with the \textbf{length} of \(\gamma\), namely
\[
{\sf M}([\![\gamma]\!])=\H^1(\gamma([a,b]))=\ell(\gamma)\coloneqq\int_a^b|\dot\gamma|(t)\,\d t.
\]
Moreover, the boundary of \([\![\gamma]\!]\) is given by
\[
\partial[\![\gamma]\!]=\delta_{\gamma(b)}-\delta_{\gamma(a)},
\]
where \(\delta_x\) stands for the Dirac measure at \(x\in\X\).
In particular, it holds that \({\sf M}(\partial[\![\gamma]\!])=0\)
if and only if \(\gamma\) is a \textbf{loop}, meaning that
\(\gamma(a)=\gamma(b)\), otherwise \({\sf M}(\partial[\![\gamma]\!])=2\).
By an \textbf{injective loop} we will mean a loop \(\gamma\colon[a,b]\to\X\)
such that the restriction \(\gamma|_{[a,b)}\) is injective.
\medskip

As observed for instance in \cite{PS2}, it holds that the mapping
\begin{equation}\label{eq:continuity_currentif}
\LIP\big([0,1];\X\big)\ni\gamma\mapsto[\![\gamma]\!]\in\mathscr M_1(\X)
\quad\text{ is continuous,}
\end{equation}
where \(\LIP\big([0,1];\X\big)\) and \(\mathscr M_1(\X)\) are equipped
with the topology of the uniform convergence and the topology of the
weak convergence of currents, respectively.

In the following we will need the version of Arzel\`{a}--Ascoli's theorem stated below, that can be found in \cite{PS1}. We include for completeness a direct proof.
\begin{lemma}[Arzel\`{a}--Ascoli revisited {\cite[Proposition 2.1]{PS1}}]\label{lem:AA}
Let \((\X,\sfd)\) be a complete metric space. Consider a sequence
\((\gamma_n)_n\subseteq\LIP\big([0,1];\X\big)\) of Lipschitz
curves satisfying \(L\coloneqq\sup_n\Lip(\gamma_n)<\infty\). Suppose that for
every \(\varepsilon>0\) there exists a compact set \(K\subseteq\X\) such that
\[
\mathcal L^1\big(\big\{t\in[0,1]\,:\,\gamma_n(t)\notin K\big\}\big)
\leq\varepsilon,\quad\text{ for every }n\in\N.
\]
Then there exists a subsequence \((\gamma_{n_i})_i\) uniformly
converging to some limit curve \(\gamma\in\LIP\big([0,1];\X\big)\).
\end{lemma}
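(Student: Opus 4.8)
The plan is to deduce the conclusion from the classical Arzel\`a--Ascoli theorem, by checking its two hypotheses for the family \(\{\gamma_n\}\): namely \textbf{(a)} uniform equicontinuity and \textbf{(b)} pointwise relative compactness. Item \textbf{(a)} is immediate from the assumption, since \(\sfd(\gamma_n(s),\gamma_n(t))\le L|s-t|\) for all \(n\in\N\) and \(s,t\in[0,1]\), so that the \(\gamma_n\) are uniformly \(L\)-Lipschitz, hence equicontinuous. The whole point of the tightness hypothesis is to supply item \textbf{(b)}, which is the step where some care is needed.

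To establish \textbf{(b)}, I would fix \(t_0\in[0,1]\) and show that \(\{\gamma_n(t_0):n\in\N\}\) is totally bounded. Given \(\eps\in(0,1/2]\), let \(K\) be the compact set provided by the hypothesis and choose an interval \(J\subseteq[0,1]\) with \(t_0\in J\) and \(\eps<\mathcal L^1(J)\le 2\eps\) (always possible in this range of \(\eps\)). Since \(\mathcal L^1(\{t\in J:\gamma_n(t)\notin K\})\le\eps<\mathcal L^1(J)\), there must exist some \(t'\in J\) with \(\gamma_n(t')\in K\), and then the Lipschitz bound gives \(\sfd(\gamma_n(t_0),\gamma_n(t'))\le L\,\mathcal L^1(J)\le 2L\eps\). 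Thus every point \(\gamma_n(t_0)\) lies within distance \(2L\eps\) of the compact set \(K\). Letting \(\eps\to0\), the set \(\{\gamma_n(t_0):n\}\) is contained in arbitrarily small neighbourhoods of compact sets, hence is totally bounded; completeness of \(\X\) then makes its closure compact, giving \textbf{(b)}.

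With \textbf{(a)} and \textbf{(b)} in hand, I would run the usual diagonal argument. Let \(D\subseteq[0,1]\) be a countable dense set (e.g.\ the rationals); by pointwise relative compactness and a diagonal extraction I obtain a subsequence \((\gamma_{n_i})_i\) converging at every point of \(D\). The equi-Lipschitz bound upgrades this to uniform convergence via a standard three-\(\eps\) estimate: given \(\delta>0\), select finitely many points of \(D\) that are \(\delta/(3L)\)-dense in \([0,1]\); using Cauchyness of the subsequence at those finitely many points together with the Lipschitz estimate, one checks that \((\gamma_{n_i})\) is uniformly Cauchy. By completeness of \(\X\) it converges uniformly to a limit \(\gamma\), which inherits \(\Lip(\gamma)\le L\) and therefore belongs to \(\LIP([0,1];\X)\).

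The only genuinely non-routine point is the passage from the measure-theoretic tightness condition to pointwise relative compactness; once that observation is in place, the argument is the standard Arzel\`a--Ascoli machinery, with completeness of \(\X\) invoked twice — to turn total boundedness into compactness, and to turn uniform Cauchyness into uniform convergence.
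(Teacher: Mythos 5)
Your proof is correct. The decisive observation is the same one the paper uses: since the ``bad'' set \(\{t:\gamma_n(t)\notin K\}\) has measure at most \(\eps\), any interval of length exceeding \(\eps\) must contain a point sent into \(K\), and the uniform Lipschitz bound then transports compactness from \(K\) to the point of interest. Where you diverge is in the packaging. You prove genuine pointwise relative compactness at \emph{every} \(t_0\in[0,1]\) (each orbit \(\{\gamma_n(t_0)\}_n\) lies in the \(2L\eps\)-neighbourhood of a compact set for every \(\eps\), hence is totally bounded, hence precompact by completeness) and then invoke the classical Arzel\`a--Ascoli theorem off the shelf. The paper instead never establishes precompactness at a prescribed point: for each \(m\) it partitions \([0,1]\) into intervals of length \(1/m\), extracts for each interval \(I\) a sequence of points \(x^I_n\in I\) landing in \(K\), and uses compactness of \(K\times I\) to produce a \emph{constructed} dense set of parameters \(x^I_\infty\) at which a subsequence converges, before upgrading to uniform convergence exactly as you do. Your route is slightly cleaner in that it isolates the measure-theoretic input as a verification of the standard hypothesis (b) and then delegates everything else to the textbook theorem; the paper's route avoids the small technical point you gloss over (choosing \(J\ni t_0\) with \(\eps<\mathcal L^1(J)\le 2\eps\) near the endpoints of \([0,1]\), which is fine for \(\eps\le 1/2\) but deserves the one-line case distinction) at the cost of tracking the moving points \(x^I_n\). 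Both arguments are complete and correct.
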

\begin{proof}
Given any integer \(m\geq 1\), let us consider the family \(\mathcal{I}^m:=(I^m_j)_{j=1}^{m}\) of subintervals of \([0,1]\) given by
\begin{equation*}
I^m_j=\left[\frac{j-1}{m},\frac{j}{m}\right],\quad \text{ for every }j=1,\ldots, m.
\end{equation*}
We choose \(\varepsilon=\frac{1}{2m}\) and consider a compact set \(K\) given by the assumptions of the Lemma. Since every subinterval \(I\in \mathcal{I}^m\) has measure \(\tfrac{1}{m}\), for every such \(I\) and for every \(n\in\N\) there exists a point \(x^I_n\in I\) such that \(\gamma_n(x^I_n)\in K\). By compactness of both \(K\) and \(I\), for every \(I\in\mathcal{I}^m\) we can ensure that, up to a not relabelled subsequence in \(n\),
\begin{align}
\begin{aligned}\label{eq:AA_cpt}
x^I_n\to x^I_\infty,&\quad\text{ for some \(x^I_\infty\in I\),}\\
\gamma_n(x^I_n)\to y^I,&\quad\text{ for some \(y^I\in K\)}
\end{aligned}
\end{align}
as \(n\to\infty\). We now claim that \(\gamma_n(x^I_\infty)\to y^I\) as \(n\to\infty\). Indeed, by triangle inequality and the uniform Lipschitz property of \(\gamma_n\) we have
\[
\sfd\big(\gamma_n(x^I_\infty),y^I\big)\leq \sfd\big(\gamma_n(x^I_\infty),\gamma_n(x^I_n)\big)+\sfd\big(\gamma_n(x^I_n),y^I\big)
\leq L|x^I_\infty-x^I_n|+\sfd\big(\gamma_n(x^I_n),y^I\big),
\]
which goes to zero by \eqref{eq:AA_cpt}. We have thus ensured that a subsequence of \(\gamma_n\) converges on all points \(x^I_\infty\), \(I\in \mathcal{I}^m\). Repeating the same procedure for every \(m\in \N\), and finally extracting a diagonal subsequence, we find \((\gamma_{n_i})_{i\in \N}\) that converges on a dense subset of \([0,1]\) as \(i\to\infty\). By the uniform Lipschitz property the convergence is uniform and holds on the entirety of \([0,1]\), thus concluding the proof.
\end{proof}
\subsection{Isoperimetric inequalities of Euclidean type}
As proved by S.\ Wenger in \cite{Wenger1}, all Banach spaces admit
an isoperimetric inequality of Euclidean type, as described in the
following statement.
\begin{theorem}[Isoperimetric inequality of Euclidean type \cite{Wenger1}]
\label{thm:isoper}
Given any \(k\in\N\), there exists a constant \(\tilde D_k>0\) such that
the following property is verified. If \(\big(\X,\|\cdot\|\big)\) is a
Banach space and a current \(T\in\mathscr I_k(\X)\) satisfies
\(\partial T=0\), then there exists \(S\in\mathscr I_{k+1}(\X)\) such
that \(\partial S=T\) and
\begin{equation}\label{eq:isoper}
{\sf M}(S)\leq\tilde D_k\,{\sf M}(T)^{\nicefrac{k+1}{k}}.
\end{equation}
\end{theorem}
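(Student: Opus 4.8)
The plan is to derive \eqref{eq:isoper} from two ingredients: a \emph{cone construction}, which fills any cycle at a cost proportional to the diameter of its support, and a slicing scheme that trades this diameter factor for a power of the mass. Since \(\big(\X,\|\cdot\|\big)\) is Banach, for a basepoint \(x_0\) the affine homotopy \(h(t,y):=x_0+t(y-x_0)\) is Lipschitz, so one may set \(C_{x_0}T:=h_\#\big([\![0,1]\!]\times T\big)\), where \([\![0,1]\!]\) is the \(1\)-current carried by the unit interval. Using that pushforward commutes with \(\partial\) together with the boundary rule for the product current, one checks that \(C_{x_0}T\in\mathscr I_{k+1}(\X)\), that \(\partial C_{x_0}T=T-C_{x_0}(\partial T)\) -- hence \(\partial C_{x_0}T=T\) whenever \(\partial T=0\) -- and that \({\sf M}(C_{x_0}T)\le\tfrac{R}{k+1}\,{\sf M}(T)\) with \(R:=\sup_{y\in{\rm spt}(T)}\|y-x_0\|\). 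This already produces a filling; the whole difficulty is to remove the factor \(R\).

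The natural scale is \(\rho\sim{\sf M}(T)^{1/k}\), forced by balancing the cone cost \(R\,{\sf M}(T)\) against the target \({\sf M}(T)^{(k+1)/k}\), since \(\rho\,{\sf M}(T)\sim{\sf M}(T)^{(k+1)/k}\). I would argue by induction on \(k\), assuming \eqref{eq:isoper} with exponent \(\tfrac{k}{k-1}\) in dimension \(k-1\). Given a \(k\)-cycle \(T\) of mass \(m\), I take the \(1\)-Lipschitz radial function \(r(y):=\|y-x_0\|\) and invoke the Ambrosio--Kirchheim slicing theory: the slices \(\langle T,r,s\rangle\) are integral \((k-1)\)-currents, they satisfy the coarea bound \(\int_0^\infty{\sf M}(\langle T,r,s\rangle)\,\d s\le m\), and \(\partial\langle T,r,s\rangle=\langle\partial T,r,s\rangle=0\), so each slice is a cycle. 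Averaging over a window of radii of length \(\sim\rho\) yields a good radius \(s\sim\rho\) at which \(P:=\langle T,r,s\rangle\) has mass \(\lesssim m/\rho\sim m^{(k-1)/k}\); the inductive hypothesis then fills \(P\) by some \(Q\in\mathscr I_k(\X)\) with \({\sf M}(Q)\le D_{k-1}{\sf M}(P)^{k/(k-1)}\lesssim m\) (for the base case \(k=1\) the slice \(P\) is a \(0\)-cycle and \(Q\) is simply its cone \(C_{x_0}P\), of cost \(\le s\,{\sf M}(P)\lesssim m\)). Since \(\partial\big(T\llcorner\{r<s\}\big)=P\), the current \(T\llcorner\{r<s\}-Q\) is a cycle supported in \(\bar B(x_0,s)\); coning it from \(x_0\) gives \(S:=C_{x_0}\big(T\llcorner\{r<s\}-Q\big)\) with \({\sf M}(S)\lesssim s\big({\sf M}(T)+{\sf M}(Q)\big)\lesssim\rho\,m\sim m^{(k+1)/k}\). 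Then \(T-\partial S=T\llcorner\{r\ge s\}+Q=:T'\) is again a cycle, and the construction is iterated on \(T'\); the point is to arrange the centres and scales so that \({\sf M}(T')\le\theta\,{\sf M}(T)\) for a fixed \(\theta<1\), whence the residual cycles decay geometrically, their fillings sum in flat norm (Theorem \ref{thm:weak_conv_and_flat_norm}) to an exact integral filling of \(T\), and the masses \(\sum_j{\sf M}(T^{(j)})^{(k+1)/k}\lesssim m^{(k+1)/k}\sum_j\theta^{j(k+1)/k}\) converge to the claimed bound.

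The main obstacle is exactly the step glossed over above: guaranteeing that each iteration removes a fixed proportion of the mass at cost \(\lesssim m^{(k+1)/k}\). In finite dimensions this is what the Federer--Fleming deformation theorem delivers by pushing the current onto a grid, but in an infinite-dimensional Banach space there is no doubling and no bounded-overlap covering, so one cannot simply chop \(T\) into boundedly many pieces of diameter \(\sim\rho\) -- a thin, very long \(k\)-current shows that no single ball of radius \(\rho\) need capture a definite fraction of the mass. Overcoming this is the crux of Wenger's argument \cite{Wenger1}: one must run the slicing scheme at a whole hierarchy of scales and show, via the coarea bound and the lower-dimensional isoperimetric inequality, that the total filling cost is still controlled and that the residual mass genuinely contracts, all while keeping every intermediate current integral. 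Verifying the slicing prerequisites (integrality of slices, the boundary--slice commutation, and the coarea inequality) in the metric setting, and checking flat-norm convergence of the telescoping series to a bona fide filling, is where the real work concentrates.
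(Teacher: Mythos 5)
You should first note that the paper does not prove this statement at all: Theorem \ref{thm:isoper} is imported verbatim from Wenger \cite{Wenger1} and used as a black box, so there is no in-paper argument to match yours against. Judged on its own terms, your outline has the right skeleton of Wenger's proof (cone construction with the bound \({\sf M}(C_{x_0}T)\le\frac{R}{k+1}{\sf M}(T)\), slicing by the radial function with the coarea inequality and \(\partial\langle T,r,s\rangle=\pm\langle\partial T,r,s\rangle\), induction on \(k\), and a telescoping iteration), but it is not a proof, because the decisive step is exactly the one you flag and then leave open.

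Concretely: for the iteration to close you need that a definite fraction of \(\|T\|\) sits in a ball of radius \(s\sim{\sf M}(T)^{1/k}\) around a well-chosen centre, \emph{and} that the filling \(Q\) of the slice at radius \(s\) has mass small compared to \({\sf M}(T\llcorner\{r<s\})\); otherwise \({\sf M}(T')={\sf M}(T\llcorner\{r\ge s\})+{\sf M}(Q)\) need not satisfy \({\sf M}(T')\le\theta\,{\sf M}(T)\) for any fixed \(\theta<1\), the residuals do not decay geometrically, and the series of fillings need not converge. Your own example (a long thin cycle) shows this can fail for an arbitrary choice of \(x_0\). What Wenger actually proves --- and what is missing here --- is a lower density estimate obtained by a stopping-time argument: combining the coarea inequality with the \((k-1)\)-dimensional isoperimetric inequality, one shows that either \(\|T\|\big(B(x_0,r)\big)\ge c\,r^k\) up to scale \(\sim{\sf M}(T)^{1/k}\) for a suitable \(x_0\), or one can split off a piece of \(T\) by cutting along a slice whose filling is cheap; iterating yields a decomposition \(T=\sum_i T_i\) with \({\sf M}(T)=\sum_i{\sf M}(T_i)\) into ``round'' cycles satisfying \({\rm diam}\,{\rm spt}(T_i)\lesssim{\sf M}(T_i)^{1/k}\), each of which is then coned off. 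Without this density/decomposition lemma the argument does not go through; with it, the remaining points you mention (integrality of slices, flat-norm convergence of the partial fillings via the closure and compactness theorems of \cite{AK}, and summing \(\sum_i{\sf M}(T_i)^{(k+1)/k}\le{\sf M}(T)^{(k+1)/k}\) by concavity) are routine. In the context of this paper the correct move is simply to cite \cite{Wenger1}, as the authors do.
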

In the remaining part of this paper, we will need the following
consequence of Theorem \ref{thm:isoper}.
\begin{corollary}\label{cor:conseq_isoper}
Given any \(k\in\N\), there exists a constant \(D_k>0\) such that
the following property is verified. If \(\big(\X,\|\cdot\|\big)\) is a
Banach space, then it holds that
\begin{equation}\label{eq:conseq_isoper}
{\sf F}(T)\leq D_k\,{\sf N}(T)^{\nicefrac{k+1}{k}},
\quad\text{ for every }T\in\mathscr I_k(\X).
\end{equation}
\end{corollary}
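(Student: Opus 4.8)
The plan is to deduce Corollary~\ref{cor:conseq_isoper} directly from the isoperimetric inequality of Theorem~\ref{thm:isoper}, by splitting a general integral current into its boundary part and a correction term. The key observation is that for an arbitrary \(T\in\mathscr I_k(\X)\) the boundary \(\partial T\) is itself an integral \((k-1)\)-current which is cycle, i.e.\ \(\partial(\partial T)=0\), so Theorem~\ref{thm:isoper} applies to it \emph{at level \(k-1\)}. I would therefore first use the isoperimetric inequality to fill in \(\partial T\): there exists \(S\in\mathscr I_k(\X)\) with \(\partial S=\partial T\) and \({\sf M}(S)\leq\tilde D_{k-1}\,{\sf M}(\partial T)^{\nicefrac{k}{k-1}}\). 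The point of introducing \(S\) is that \(T-S\) is now a \emph{cycle}: \(\partial(T-S)=\partial T-\partial T=0\).

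With \(R\coloneqq T-S\in\mathscr I_k(\X)\) a cycle, the definition of the flat norm (taking the competitor \((R,S)\), since \(T=R+\partial S=(T-S)+\partial S\)) immediately gives
\[
{\sf F}(T)\leq{\sf M}(T-S)+{\sf M}(S)\leq{\sf M}(T)+2\,{\sf M}(S)
\leq{\sf M}(T)+2\tilde D_{k-1}\,{\sf M}(\partial T)^{\nicefrac{k}{k-1}},
\]
where I used subadditivity of mass, \({\sf M}(T-S)\le{\sf M}(T)+{\sf M}(S)\). It then remains to bound the right-hand side by \(D_k\,{\sf N}(T)^{\nicefrac{k+1}{k}}\). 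Since \({\sf M}(T)\le{\sf N}(T)\) and \({\sf M}(\partial T)\le{\sf N}(T)\), and since the exponents \(1\), \(\nicefrac{k}{k-1}\) are both at most \(\nicefrac{k+1}{k}\) only after rescaling, I would handle the homogeneity by a standard scaling/normalisation argument: the inequality is invariant under replacing \(T\) by a dilation, so one may reduce to the case \({\sf N}(T)=1\), where \({\sf M}(T),{\sf M}(\partial T)\le 1\) and hence each term on the right is bounded by a dimensional constant, giving \({\sf F}(T)\le D_k\) with \({\sf N}(T)^{\nicefrac{k+1}{k}}=1\).

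The one genuinely delicate point is the exponent bookkeeping: the filling estimate for \(\partial T\) produces the power \(\nicefrac{k}{k-1}\), which does not match the target power \(\nicefrac{k+1}{k}\). This is why a pure term-by-term comparison does not work, and the scaling argument (or equivalently, a two-case split according to whether \({\sf N}(T)\lessgtr 1\)) is essential: under dilation \(\lambda_\#\) the mass scales like \(\lambda^k\) and the flat norm like \(\lambda^{k+1}\), so choosing \(\lambda\) to normalise \({\sf N}(T)\) to \(1\) cleanly absorbs the mismatch of exponents into the constant \(D_k\). I would also note the degenerate case \(k=0\) (where \(\partial T=0\) and the statement reduces to \({\sf F}(T)\le{\sf M}(T)\le D_0\,{\sf M}(T)\), trivially true), so that the expression \(\nicefrac{k}{k-1}\) is only invoked for \(k\ge 1\). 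Apart from this homogeneity issue, every ingredient---the filling of the cycle \(\partial T\), the flat-norm competitor, and the elementary mass inequalities---is immediate from the definitions and from Theorem~\ref{thm:isoper}.
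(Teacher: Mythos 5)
There is a genuine gap, in fact two. First, your flat-norm competitor is dimensionally inconsistent: your \(S\) is a \(k\)-current (it fills the \((k-1)\)-cycle \(\partial T\)), so the pair \((T-S,S)\) is not admissible in the definition of \({\sf F}\), and the identity ``\(T=(T-S)+\partial S\)'' is false (\(\partial S=\partial T\) is a \((k-1)\)-current). You correctly observe that \(T-S\) is a cycle, but you never use this: the whole point is to apply Theorem~\ref{thm:isoper} a \emph{second} time, at level \(k\), to the cycle \(T-S\), producing \(S'\in\mathscr I_{k+1}(\X)\) with \(\partial S'=T-S\) and \({\sf M}(S')\leq\tilde D_k\,{\sf M}(T-S)^{\nicefrac{k+1}{k}}\); then \(T=S+\partial S'\) is a genuine competitor and \({\sf F}(T)\leq{\sf M}(S)+{\sf M}(S')\). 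This is exactly what the paper does. Skipping the second filling leaves you with the bound \({\sf F}(T)\leq{\sf M}(T)+2{\sf M}(S)\), whose first term carries exponent \(1\) on \({\sf M}(T)\); since \(1<\nicefrac{k+1}{k}\), no constant \(D_k\) can dominate \({\sf N}(T)\) by \(D_k{\sf N}(T)^{\nicefrac{k+1}{k}}\) as \({\sf N}(T)\to 0\), which is precisely the nontrivial regime. After the second filling every term carries an exponent \(\geq\nicefrac{k+1}{k}\), and the case \({\sf N}(T)<1\) closes by monotonicity of \(t\mapsto t^\alpha\) in \(\alpha\) on \((0,1)\), while \({\sf N}(T)\geq 1\) is trivial from \({\sf F}(T)\leq{\sf N}(T)\).

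Second, the scaling argument you invoke to repair the exponents does not exist: under a dilation \(x\mapsto\lambda x\) the masses of \(T\) and of \(\partial T\) scale like \(\lambda^k\) and \(\lambda^{k-1}\) respectively, and the two summands in the definition of \({\sf F}\) scale like \(\lambda^k\) and \(\lambda^{k+1}\), so neither \({\sf N}\) nor \({\sf F}\) is homogeneous and the inequality \eqref{eq:conseq_isoper} is not dilation-invariant; one cannot normalise \({\sf N}(T)=1\). Finally, the case \(k=1\) needs separate treatment: your filling of \(\partial T\) at level \(k-1=0\) would invoke the exponent \(\nicefrac{k}{k-1}=\nicefrac{1}{0}\), which is meaningless. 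The paper's way out is that for \(k=1\) and \({\sf N}(T)<1\) the boundary \(\partial T\) is an integral \(0\)-current of mass less than \(1\), hence of mass \(0\) by integrality, so \(T\) is already a cycle and a single application of Theorem~\ref{thm:isoper} suffices.
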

\begin{proof}
In the case where \({\sf N}(T)\geq 1\), we have that
\({\sf F}(T)\leq{\sf N}(T)\leq{\sf N}(T)^{\nicefrac{k+1}{k}}\).
Hence, let us suppose that \({\sf N}(T)<1\). If \(k=1\), then
necessarily \(\partial T=0\), whence it follows from Theorem
\ref{thm:isoper} that there exists \(S\in\mathscr I_2(\X)\) such
that \(\partial S=T\) and \({\sf M}(S)\leq\tilde D_1\,{\sf M}(T)^2\).
In particular, we obtain that
\[
{\sf F}(T)\leq{\sf M}(S)\leq\tilde D_1\,{\sf M}(T)^2\leq
\tilde D_1\,{\sf N}(T)^2.
\]
Let us now pass to the case where \(k\geq 2\). Since
\(\partial(\partial T)=0\), we know from Theorem \ref{thm:isoper}
that there exists \(R\in\mathscr I_k(\X)\) such that
\(\partial R=\partial T\) and
\({\sf M}(R)\leq\tilde D_{k-1}\,{\sf M}(\partial T)^{\nicefrac{k}{k-1}}\).
Since \(\partial(T-R)=0\), by using again Theorem \ref{thm:isoper} we
obtain \(S\in\mathscr I_{k+1}(\X)\) such that \(\partial S=T-R\)
and \({\sf M}(S)\leq\tilde D_k\,{\sf M}(T-R)^{\nicefrac{k+1}{k}}\).
Observe that \({\sf F}(T)\leq{\sf M}(R)+{\sf M}(S)\), as it follows from
\(T=R+\partial S\). Therefore, it holds that
\[\begin{split}
{\sf F}(T)&\leq{\sf M}(R)+{\sf M}(S)\leq
\tilde D_{k-1}\,{\sf M}(\partial T)^{\nicefrac{k}{k-1}}+
\tilde D_k\,{\sf M}(T-R)^{\nicefrac{k+1}{k}}\\
&\leq\tilde D_{k-1}\,{\sf M}(\partial T)^{\nicefrac{k}{k-1}}+
\tilde D_k\big({\sf M}(T)+{\sf M}(R)\big)^{\nicefrac{k+1}{k}}\\
&\leq\tilde D_{k-1}\,{\sf M}(\partial T)^{\nicefrac{k}{k-1}}
+2^{\nicefrac{1}{k}}\tilde D_k\,{\sf M}(T)^{\nicefrac{k+1}{k}}
+2^{\nicefrac{1}{k}}\tilde D_k\,{\sf M}(R)^{\nicefrac{k+1}{k}}\\
&\leq\tilde D_{k-1}\,{\sf M}(\partial T)^{\nicefrac{k}{k-1}}
+2^{\nicefrac{1}{k}}\tilde D_k\,{\sf M}(T)^{\nicefrac{k+1}{k}}
+2^{\nicefrac{1}{k}}\tilde D_k\tilde D_{k-1}^{\nicefrac{k+1}{k}}
\,{\sf M}(\partial T)^{\nicefrac{k+1}{k-1}}\\
&\leq\tilde D_{k-1}\,{\sf N}(T)^{\nicefrac{k}{k-1}}
+2^{\nicefrac{1}{k}}\tilde D_k\,{\sf N}(T)^{\nicefrac{k+1}{k}}
+2^{\nicefrac{1}{k}}\tilde D_k\tilde D_{k-1}^{\nicefrac{k+1}{k}}
\,{\sf N}(T)^{\nicefrac{k+1}{k-1}}\\
&\leq\big(\tilde D_{k-1}+2^{\nicefrac{1}{k}}\tilde D_k+
2^{\nicefrac{1}{k}}\tilde D_k\tilde D_{k-1}^{\nicefrac{k+1}{k}}\big)
{\sf N}(T)^{\nicefrac{k+1}{k}},
\end{split}\]
where in the last inequality we used the fact that \({\sf N}(T)<1\)
and \(\frac{k+1}{k}<\frac{k}{k-1}<\frac{k+1}{k-1}\). All in all,
we proved that the statement holds with
\(D_k\coloneqq 1+\tilde D_{k-1}+2^{\nicefrac{1}{k}}\tilde D_k
+2^{\nicefrac{1}{k}}\tilde D_k\tilde D_{k-1}^{\nicefrac{k+1}{k}}\),
where \(\tilde D_0\coloneqq 0\).
\end{proof}
\subsection{Behaviour of currents under isometric embeddings}
Let \((\X,\sfd_\X)\), \((\Y,\sfd_\Y)\) be complete metric spaces.
Let \(\iota\colon\X\hookrightarrow\Y\) be an isometry.
In particular, the map \(\iota\) is Lipschitz, thus to any
\(T\in\mathscr M_k(\X)\) we can associate its pushforward
\(\iota_\# T\in\mathscr M_k(\Y)\). We aim to show that currents in
\(\Y\) supported in \(\iota(\X)\) and currents in \(\X\) can be canonically
identified via the pushforward map \(\iota_\#\).
\medskip

Given any \(T\in\mathscr M_k(\X)\), it holds that
\begin{equation}\label{eq:spt_pushfrwd}
\|\iota_\# T\|=\iota_\#\|T\|,
\end{equation}
as observed for instance in the line below \cite[Eq.\ (2.4)]{AK}.
In particular, we have \({\sf M}(\iota_\# T)={\sf M}(T)\).
Moreover, since \(\iota(\X)\) is closed in \(\Y\), we deduce that
\({\rm spt}(\iota_\# T)\subseteq\iota(\X)\). Given that
\(\partial(\iota_\# T)=\iota_\#(\partial T)\), we also have that
\(\iota_\# T\in\mathscr N_k(\Y)\) whenever \(T\in\mathscr N_k(\X)\),
and in this case it holds \({\sf N}(\iota_\# T)={\sf N}(T)\).
\begin{lemma}\label{lem:iota_sharp_bij}
Let \((\X,\sfd_\X)\), \((\Y,\sfd_\Y)\) be complete metric spaces and
\(\iota\colon\X\hookrightarrow\Y\) an isometry. Fix any
\[
(\mathscr F_\X,\mathscr F_\Y)\in
\Big\{\big(\mathscr M_k(\X),\mathscr M_k(\Y)\big),
\big(\mathscr N_k(\X),\mathscr N_k(\Y)\big),
\big(\mathscr R_k(\X),\mathscr R_k(\Y)\big),
\big(\mathscr I_k(\X),\mathscr I_k(\Y)\big)\Big\}.
\]
Then the pushforward map \(\iota_\#\) is a bijection between \(\mathscr F_\X\)
and \(\big\{T'\in\mathscr F_\Y\,:\,{\rm spt}(T')\subseteq\iota(\X)\big\}\).
\end{lemma}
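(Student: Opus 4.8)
The plan is to derive injectivity from the mass-preservation identity and to establish surjectivity by constructing the inverse map explicitly, the whole argument hinging on the locality of metric currents.

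\smallskip

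First I would dispose of the easy half. Since $\X$ is complete, $\iota(\X)$ is closed in $\Y$ and $\iota\colon\X\to\iota(\X)$ is a bijective isometry with $1$-Lipschitz inverse $\iota^{-1}\colon\iota(\X)\to\X$. By \eqref{eq:spt_pushfrwd} we have $\|\iota_\# T\|=\iota_\#\|T\|$, hence ${\sf M}(\iota_\# T)={\sf M}(T)$ for every $T\in\mathscr M_k(\X)$; moreover ${\rm spt}(\iota_\# T)\subseteq\iota(\X)$ and, as recorded before the statement, $\iota_\#$ preserves normality, rectifiability and integer-rectifiability (the last two because the bi-Lipschitz map $\iota$ sends $\mathcal H^k$-rectifiable sets to $\mathcal H^k$-rectifiable sets and $\mathcal H^k$-null sets to $\mathcal H^k$-null sets). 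Thus $\iota_\#$ does map $\mathscr F_\X$ into $\big\{T'\in\mathscr F_\Y:{\rm spt}(T')\subseteq\iota(\X)\big\}$ in all four cases. Injectivity is then immediate: if $\iota_\# T=0$ then ${\sf M}(T)={\sf M}(\iota_\# T)=0$, so $T=0$ because ${\sf M}$ is a norm on $\mathscr M_k(\X)$; as the four classes are subsets of $\mathscr M_k(\X)$, this single argument covers them all.

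\smallskip

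The substantial point is surjectivity. Given $T'\in\mathscr F_\Y$ with ${\rm spt}(T')\subseteq\iota(\X)$, I would define a candidate $T\colon\mathcal D^k(\X)\to\R$ by
\[
T(f,\pi_1,\ldots,\pi_k):=T'(\bar f,\bar\pi_1,\ldots,\bar\pi_k),
\]
where $\bar f\in\LIP_b(\Y)$ and $\bar\pi_i\in\LIP(\Y)$ are McShane extensions to $\Y$ of $f\circ\iota^{-1}$ and $\pi_i\circ\iota^{-1}$, which are (bounded) Lipschitz on the closed set $\iota(\X)$ with the same Lipschitz constants. Setting $\mu:=(\iota^{-1})_\#\|T'\|$, the mass bound \eqref{eq:finite_mass} for $T'$, together with $\Lip(\bar\pi_i)=\Lip(\pi_i)$ and the fact that $\|T'\|$ is concentrated on $\iota(\X)$ where $\bar f=f\circ\iota^{-1}$, yields $|T(f,\pi)|\le\prod_i\Lip(\pi_i)\int|f|\,\d\mu$, which gives property (iii) and the finiteness of the mass. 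One then verifies $\iota_\# T=T'$: by definition $\iota_\# T(f,\pi)=T(f\circ\iota,\pi_1\circ\iota,\ldots,\pi_k\circ\iota)=T'\big(\overline{f\circ\iota},\overline{\pi_1\circ\iota},\ldots\big)$, and the extensions $\overline{f\circ\iota}$, $\overline{\pi_i\circ\iota}$ agree with $f$, $\pi_i$ on $\iota(\X)\supseteq{\rm spt}(T')$, so the value equals $T'(f,\pi)$.

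\smallskip

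The main obstacle, on which both the well-posedness of $T$ and the verification of properties (i)--(ii) rest, is the independence of the above formula from the chosen extensions. This is precisely the \emph{locality} property of metric currents: the number $T'(\omega)$ is unchanged when the components of $\omega$ are modified away from ${\rm spt}(T')$, i.e.\ it depends only on their restrictions to ${\rm spt}(T')$ --- a refinement of the defining property (ii), belonging to the Ambrosio--Kirchheim theory \cite{AK}. (For the first slot this already follows from the concentration of $\|T'\|$ on its support; for the $\pi_i$-slots it is the genuine locality statement.) Granting it, any two systems of extensions coinciding on $\iota(\X)\supseteq{\rm spt}(T')$ produce the same number, so $T$ is well defined and inherits (i)--(ii) from $T'$, whence $T\in\mathscr M_k(\X)$. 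Finally $T$ lies in the correct subclass: from $\iota_\# T=T'$ and $\iota_\#\partial T=\partial T'$ one gets ${\sf M}(\partial T)={\sf M}(\partial T')$, so $T$ is normal when $T'$ is; and since $\|T\|=(\iota^{-1})_\#\|T'\|$ is concentrated on the bi-Lipschitz image under $\iota^{-1}$ of a rectifiable set carrying $T'$, the current $T$ is rectifiable, respectively integer-rectifiable, whenever $T'$ is. This produces $T\in\mathscr F_\X$ with $\iota_\# T=T'$, completing the proof.
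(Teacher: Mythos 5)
Your proposal is correct and follows essentially the same route as the paper: surjectivity via McShane extensions of $f\circ\iota^{-1}$ and $\pi_i\circ\iota^{-1}$, with well-posedness resting on the locality property of metric currents from \cite{AK}. Your injectivity argument (from ${\sf M}(\iota_\#T)={\sf M}(T)$ and the fact that ${\sf M}$ is a norm) and your verification that $T$ lands in the correct subclass are slightly more explicit than the paper, which defers these points to ``similar'' and ``standard'' arguments, but the substance is identical.
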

\begin{proof}
Let us just check that \(\iota_\#\) is a bijection from \(\mathscr M_k(\X)\)
to \(\big\{T'\in\mathscr M_k(\Y)\,:\,{\rm spt}(T')\subseteq\iota(\X)\big\}\).
We will omit the proof of the remaining claims, which can be achieved by
standard arguments. Let \(T'\in\mathscr M_k(\Y)\) be a given current satisfying
\({\rm spt}(T')\subseteq\iota(\X)\). Given any
\((f,\pi_1,\ldots,\pi_k)\in\mathcal D^k(\X)\), by virtue of
McShane's Extension Theorem we can find a \((k+1)\)-tuple
\((\tilde f,\tilde\pi_1,\ldots,\tilde\pi_k)\in\mathcal D^k(\Y)\)
such that \(\tilde f|_{\iota(\X)}=f\circ\iota^{-1}\) and
\(\tilde\pi_i|_{\iota(\X)}=\pi_i\circ\iota^{-1}\) for every \(i=1,\ldots,k\).
Then, let us define
\begin{equation}\label{eq:inv_iota_sharp}
T(f,\pi_1,\ldots,\pi_k)\coloneqq T'(\tilde f,\tilde\pi_1,\ldots,\tilde\pi_k).
\end{equation}
The resulting operator \(T\colon\mathcal D^k(\X)\to\R\) is well-defined,
because the expression appearing in the right-hand side of
\eqref{eq:inv_iota_sharp} does not depend on the specific choice of
\((\tilde f,\tilde\pi_1,\ldots,\tilde\pi_k)\). Indeed, given another \((k+1)\)-tuple
\((\tilde f',\tilde\pi'_1,\ldots,\tilde\pi'_k)\in\mathcal D^k(\Y)\)
with the same properties,  we may estimate
\[\begin{split}
&\big|T'(\tilde f,\tilde\pi_1,\ldots,\tilde\pi_k)-
T'(\tilde f',\tilde\pi'_1,\ldots,\tilde\pi'_k)\big|\\
\leq\,&\big|T'(\tilde f-\tilde f',\tilde\pi_1,\ldots,\tilde\pi_k)\big|+
\sum_{i=1}^k\big|T'(\1_{\iota(\X)}\tilde f',\tilde\pi'_1,\ldots,
\tilde\pi'_{i-1},\tilde\pi_i-\tilde\pi'_i,\tilde\pi_{i+1},\ldots,
\tilde\pi_k)\big|\\
=\,&\big|T'(\tilde f-\tilde f',\tilde\pi_1,\ldots,\tilde\pi_k)\big|\leq\prod_{i=1}^k\Lip(\tilde\pi_i)\int|\tilde f-\tilde f'|\,\d\|T'\|=
\prod_{i=1}^k\Lip(\tilde\pi_i)\int_{\iota(\X)}|\tilde f-\tilde f'|\,\d\|T'\|=0,
\end{split}\]
where we employed the locality property of metric currents;
cf.\ \cite[Theorem 3.5(iii)]{AK}. Therefore, the definition
\eqref{eq:inv_iota_sharp} of \(T\) is well-posed. Moreover, one can readily
check that \(T\) is a current on \(\X\) and that \(\iota_\# T=T'\). Finally,
the injectivity of \(\iota_\#\) can be achieved by means of similar arguments.
\end{proof}
\begin{corollary}\label{cor:decomp_embedd}
Let \((\X,\sfd_\X)\), \((\Y,\sfd_\Y)\) be complete metric spaces and let
\(\iota\colon\X\hookrightarrow\Y\) be an isometry. Fix \(T\in\mathscr I_k(\X)\)
and suppose there exists a sequence \((T'_i)_{i\in\N}\subseteq\mathscr I_k(\Y)\)
such that \(\iota_\# T=\sum_{i\in\N}T'_i\) and
\({\sf N}(\iota_\# T)=\sum_{i\in\N}{\sf N}(T'_i)\). Then there exists a
sequence \((T_i)_{i\in\N}\subseteq\mathscr I_k(\X)\) such that
\begin{equation}\label{eq:decomp_embedd_claim}
T=\sum_{i\in\N}T_i,\qquad{\sf N}(T)=\sum_{i\in\N}{\sf N}(T_i).
\end{equation}
\end{corollary}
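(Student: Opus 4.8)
The plan is to transport the decomposition \((T'_i)_{i\in\N}\) back from \(\Y\) to \(\X\) by inverting the pushforward map, which is available through Lemma \ref{lem:iota_sharp_bij} as soon as we know that every \(T'_i\) is supported in \(\iota(\X)\). The whole argument is thus a matter of checking that the components of the decomposition do not charge mass outside \(\iota(\X)\), and then reading off the two identities in \eqref{eq:decomp_embedd_claim} from the isometric invariance of mass and normal mass.

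First I would apply Proposition \ref{prop:alt_sum_N} to the decomposition \(\iota_\# T=\sum_{i\in\N}T'_i\) in \(\Y\): the hypothesis \({\sf N}(\iota_\# T)=\sum_{i\in\N}{\sf N}(T'_i)\) yields, among other things, the measure identity \(\|\iota_\# T\|=\sum_{i\in\N}\|T'_i\|\). In particular \(\|T'_i\|\leq\|\iota_\# T\|\) for every \(i\), so that each measure \(\|T'_i\|\) is concentrated on \({\rm spt}(\iota_\# T)\), which is contained in \(\iota(\X)\) by \eqref{eq:spt_pushfrwd} and the discussion following it. Hence \({\rm spt}(T'_i)\subseteq\iota(\X)\) for every \(i\in\N\).

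Next, by Lemma \ref{lem:iota_sharp_bij} applied with \((\mathscr F_\X,\mathscr F_\Y)=\big(\mathscr I_k(\X),\mathscr I_k(\Y)\big)\), for each \(i\) there exists a unique \(T_i\in\mathscr I_k(\X)\) with \(\iota_\# T_i=T'_i\). Since \(\iota\) is an isometry we have \({\sf M}(T_i)={\sf M}(T'_i)\) and \({\sf N}(T_i)={\sf N}(T'_i)\); in particular \(\sum_{i\in\N}{\sf M}(T_i)=\sum_{i\in\N}{\sf M}(T'_i)\leq\sum_{i\in\N}{\sf N}(T'_i)={\sf N}(\iota_\# T)<\infty\), so the series \(\sum_{i\in\N}T_i\) converges weakly to a well-defined current in \(\mathscr M_k(\X)\). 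To obtain the first identity I would use that \(\iota_\#\) is weakly continuous: for partial sums \(S_N=\sum_{i=1}^N T_i\) one has \(\iota_\# S_N(f,\pi_1,\ldots,\pi_k)=S_N(f\circ\iota,\pi_1\circ\iota,\ldots,\pi_k\circ\iota)\), which passes to the weak limit, whence \(\iota_\#\big(\sum_{i\in\N}T_i\big)=\sum_{i\in\N}\iota_\# T_i=\sum_{i\in\N}T'_i=\iota_\# T\). The injectivity of \(\iota_\#\) from Lemma \ref{lem:iota_sharp_bij} then forces \(T=\sum_{i\in\N}T_i\). The second identity follows at once from the isometric invariance of the normal mass, namely \({\sf N}(T)={\sf N}(\iota_\# T)=\sum_{i\in\N}{\sf N}(T'_i)=\sum_{i\in\N}{\sf N}(T_i)\).

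The only genuinely delicate point is the support containment \({\rm spt}(T'_i)\subseteq\iota(\X)\): it is precisely here that the full strength of the hypothesis \({\sf N}(\iota_\# T)=\sum_{i\in\N}{\sf N}(T'_i)\) is needed, through Proposition \ref{prop:alt_sum_N}, since the bare weak identity \(\iota_\# T=\sum_{i\in\N}T'_i\) on its own would not rule out individual components placing mass off \(\iota(\X)\) (with cancellations in the sum). Once this containment is secured, everything else is routine bookkeeping through the isometric identification provided by Lemma \ref{lem:iota_sharp_bij}.
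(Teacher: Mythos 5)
Your proof is correct and follows essentially the same route as the paper's: invert the pushforward via Lemma \ref{lem:iota_sharp_bij} componentwise and use the isometric invariance of \({\sf M}\) and \({\sf N}\). The one point where you go beyond the paper is welcome rather than divergent: you explicitly verify \({\rm spt}(T'_i)\subseteq\iota(\X)\) through Proposition \ref{prop:alt_sum_N}, a hypothesis needed to apply Lemma \ref{lem:iota_sharp_bij} that the paper's proof leaves implicit, and your derivation of \(T=\sum_i T_i\) via weak continuity and injectivity of \(\iota_\#\) is just a repackaging of the paper's direct test on extended forms.
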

\begin{proof}
Given any \(i\in\N\), denote by \(T_i\in\mathscr I_k(\X)\) the unique
current such that \(\iota_\# T_i=T'_i\), whose existence stems from
Lemma \ref{lem:iota_sharp_bij}. By arguing as we did in the proof of
Lemma \ref{lem:iota_sharp_bij}, we can easily show that \(T=\sum_{i\in\N}T_i\).
Indeed, given any \((f,\pi_1,\ldots,\pi_k)\in\mathcal D^k(\X)\) and
\((\tilde f,\tilde\pi_1,\ldots,\tilde\pi_k)\in\mathcal D^k(\Y)\)
such that \(\tilde f|_{\iota(\X)}=f\circ\iota^{-1}\) and
\(\tilde\pi_i|_{\iota(\X)}=\pi_i\circ\iota^{-1}\) for every \(i=1,\ldots,k\),
it holds that
\[
T(f,\pi_1,\ldots,\pi_k)=\iota_\# T(\tilde f,\tilde\pi_1,\ldots,\tilde\pi_k)
=\sum_{i\in\N}T'_i(\tilde f,\tilde\pi_1,\ldots,\tilde\pi_k)
=\sum_{i\in\N}T_i(f,\pi_1,\ldots,\pi_k).
\]
Moreover, we have that \({\sf N}(T)={\sf N}(\iota_\# T)
=\sum_{i\in\N}{\sf N}(T'_i)=\sum_{i\in\N}{\sf N}(T_i)\),
whence \eqref{eq:decomp_embedd_claim} follows.
\end{proof}
\begin{remark}\label{rmk:indecomp_embedd}{\rm
Under the same assumptions of Lemma \ref{lem:iota_sharp_bij},
for any \(T\in\mathscr I_k(\X)\) it holds that
\[
T\text{ is indecomposable }\quad\Longleftrightarrow\quad
\text{ }\iota_\# T\text{ is indecomposable.}
\]
The validity of this claim is an immediate consequence of
Lemma \ref{lem:iota_sharp_bij}.
\fr}\end{remark}
\section{Decomposition of integral currents}\label{sec:decomp}
In this section we obtain our main Decomposition Theorem \ref{thm:decomp},
which states that every integral \(k\)-current \(T\) in an arbitrary
complete metric space can be written as an (at most countable) weak sum
\(\sum_i T_i\) of \emph{indecomposable} integral \(k\)-currents \(T_i\),
having the property that \({\sf N}(T)=\sum_i{\sf N}(T_i)\).
An alternative proof of this fact will be presented in Section
\ref{sec:alternative_decomp}. Before passing to Theorem \ref{thm:decomp},
we briefly recall a classical result concerning the embeddability
of metric spaces into Banach spaces.
\begin{remark}\label{rmk:embedd_ell_infty}{\rm
Any given metric space \((\X,\sfd)\) can be isometrically embedded into
a Banach space. For instance, this can be achieved via the
\textbf{Kuratowski embedding}, that we are going to recall.
Denote by \(C_b(\X)\) the space of all bounded, continuous, real-valued
functions defined on \(\X\), which is a Banach space if endowed with the usual
pointwise operations and the supremum norm
\[
\|f\|_{C_b(\X)}\coloneqq\sup_\X|f|,\quad\text{ for every }f\in C_b(\X).
\]
Given any point \(\bar x\in\X\), we define the map
\(\iota\colon\X\hookrightarrow C_b(\X)\) as
\[
\iota(x)\coloneqq\sfd(x,\cdot)-\sfd(\bar x,\cdot),\quad\text{ for every }x\in\X.
\]
Then \(\iota\) is an isometry. Note that \(\iota\) is highly non-canonical,
as it depends on the chosen \(\bar x\).
\fr}\end{remark}
\begin{theorem}[Decomposition of integral metric currents]\label{thm:decomp}
Let \((\X,\sfd)\) be a complete metric space. Let \(T\in\mathscr I_k(\X)\) be given. Then there exists an at most countable
family \(\{T_i\}_{i\in I}\subseteq\mathscr I_k(\X)\setminus\{0\}\) of
indecomposable integral \(k\)-currents such that
\[
T=\sum_{i\in I}T_i,\qquad{\sf N}(T)=\sum_{i\in I}{\sf N}(T_i).
\]
\end{theorem}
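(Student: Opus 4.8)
The plan is to reduce the statement to the case of a Banach space and then run a variational argument in the spirit of \cite{ACMM}. First I would use the Kuratowski embedding of Remark \ref{rmk:embedd_ell_infty} to isometrically embed \((\X,\sfd)\) into a Banach space \(\big(\Y,\|\cdot\|\big)\) via some \(\iota\). By Corollary \ref{cor:decomp_embedd}, any decomposition of \(\iota_\# T\) into integral currents with additive normal mass transports back to \(\X\), and by Remark \ref{rmk:indecomp_embedd} indecomposability is preserved under \(\iota_\#\); hence it suffices to decompose \(\iota_\# T\), and I may assume from the outset that \(\X\) itself is a Banach space. I would also record at this stage that, in any family \(\{T_i\}_i\) with \(T=\sum_i T_i\) and \({\sf N}(T)=\sum_i{\sf N}(T_i)\), Proposition \ref{prop:alt_sum_N} forces \(\|T_i\|\le\|T\|\), so that every piece is supported in \({\rm spt}(T)\), which is separable and \(\sigma\)-compact; this confines the whole construction to a fixed region and is what ultimately provides compactness.

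The core is a variational selection principle. Let \(\mathcal A\) be the collection (nonempty, since it contains \(\{T\}\)) of at most countable families \(\{T_i\}_i\subseteq\mathscr I_k(\X)\setminus\{0\}\) with \(T=\sum_i T_i\) and \({\sf N}(T)=\sum_i{\sf N}(T_i)\), and define
\[
G\big(\{T_i\}_i\big):=\sum_i{\sf N}(T_i)^2\in\big[0,{\sf N}(T)^2\big],\qquad \gamma:=\inf_{\mathcal A}G.
\]
The decisive monotonicity is that splitting a single piece as \(T_{i_0}=R+S\) with \({\sf N}(T_{i_0})={\sf N}(R)+{\sf N}(S)\) and \(R,S\neq 0\) replaces \({\sf N}(T_{i_0})^2\) by \({\sf N}(R)^2+{\sf N}(S)^2<{\sf N}(T_{i_0})^2\), both summands being strictly positive since \({\sf N}\ge{\sf M}>0\) on nonzero integral currents. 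Thus such a split strictly decreases \(G\) while staying inside \(\mathcal A\). Consequently, any family in \(\mathcal A\) attaining \(\gamma\) must consist of indecomposable currents: if one component were decomposable, splitting it would produce a competitor with \(G<\gamma\), a contradiction. The theorem therefore reduces to exhibiting a minimiser of \(G\).

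To produce a minimiser I would take a minimising sequence of decompositions \(\{T_i^{(n)}\}_i\) and pass to a limit componentwise. Since \(\sup_{i,n}{\sf N}(T_i^{(n)})\le{\sf N}(T)\) and all the \(T_i^{(n)}\) are supported in the \(\sigma\)-compact set \({\rm spt}(T)\), the Ambrosio--Kirchheim compactness theorem for integral currents (combined, in the Banach setting, with the flat-norm metrisation of Theorem \ref{thm:weak_conv_and_flat_norm}) yields, after a diagonal extraction over \(i\in\N\), integral currents \(T_i\) with \(T_i^{(n)}\rightharpoonup T_i\). Lower semicontinuity of mass together with Fatou's lemma for series then give \(G(\{T_i\}_i)\le\liminf_n G(\{T_i^{(n)}\}_i)=\gamma\) and \(\sum_i{\sf N}(T_i)\le\liminf_n\sum_i{\sf N}(T_i^{(n)})={\sf N}(T)\). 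After discarding the at most countably many null components, it remains to check that \(\{T_i\}_i\) genuinely lies in \(\mathcal A\): that \(T=\sum_i T_i\) and that the normal masses add up exactly.

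The main obstacle is precisely this last verification, which is an interchange of the limit \(n\to\infty\) with the infinite sum over \(i\). The uniform bound \(\sum_i{\sf M}(T_i^{(n)})\le{\sf N}(T)\) provides the equi-summability needed to pass \(T=\sum_i T_i^{(n)}\) to the limit and obtain \(T=\sum_i T_i\); once this identity is secured, the subadditivity estimates \eqref{eq:mass_subadd} and \eqref{eq:cont_bdry_series} give \({\sf N}(T)\le\sum_i{\sf N}(T_i)\), which together with the reverse inequality from lower semicontinuity forces equality, so that \(\{T_i\}_i\in\mathcal A\) and \(G(\{T_i\}_i)=\gamma\). The delicate points to get right are the tightness and diagonalisation controlling all components simultaneously (exhausting the \(\sigma\)-compact support \({\rm spt}(T)\) by compacta chosen so that the restrictions remain normal) and the fact that the normal-mass identity survives as an equality rather than degenerating to a strict inequality; here Proposition \ref{prop:alt_sum_N}, which recasts additivity of \({\sf N}\) as additivity of the measures \(\|T\|\) and \(\|\partial T\|\), together with lower semicontinuity of mass under weak convergence, are the crucial ingredients.
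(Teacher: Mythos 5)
Your overall architecture is the same as the paper's first proof: reduce to a Banach space via the Kuratowski embedding, set up a variational problem over the class of admissible decompositions, extract a componentwise weak limit from an optimising sequence, and use a strict convexity/concavity argument to show that any optimiser consists of indecomposable pieces. The functional is different --- you minimise \(\sum_i{\sf N}(T_i)^2\), whereas the paper maximises \(\sum_i{\sf F}(T_i)^{\nicefrac{1}{\alpha}}\) for \(\alpha\in\big(1,\frac{k+1}{k}\big)\) --- but your choice does the job for the indecomposability step: splitting a nonzero piece strictly decreases \(\sum_i{\sf N}(T_i)^2\), just as it strictly increases \(\sum_i{\sf F}(T_i)^{\nicefrac{1}{\alpha}}\).

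The gap is in the existence of a minimiser that is a genuine decomposition, specifically in passing \(T=\sum_i T_i^{(n)}\) to the limit in \(n\). The bound \(\sum_i{\sf M}(T_i^{(n)})\le{\sf N}(T)\) gives no uniform-in-\(n\) control on the tails \(\sum_{i\ge j}{\sf M}(T_i^{(n)})\): a decomposition into \(n\) pieces of comparable size has, for every fixed \(j\), a tail of total mass close to \({\sf M}(T)\) once \(n\) is large, while each individual piece converges weakly to \(0\); the componentwise limit family can then fail to sum to \(T\). Because you are \emph{minimising} and mass is only lower semicontinuous, this degeneration is not penalised by your variational principle and must be excluded by hand. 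The paper does this in Step~2 of its proof: order the pieces so that \({\sf N}(T_i^{(n)})\le{\sf N}(T)/i\) and apply the isoperimetric-type estimate \({\sf F}(S)\le D_k\,{\sf N}(S)^{\nicefrac{k+1}{k}}\) of Corollary \ref{cor:conseq_isoper}; the superlinear exponent is precisely what converts the merely summable normal masses into a tail bound \(\sum_{i\ge j}{\sf F}(T_i^{(n)})\lesssim j^{-\nicefrac{1}{k}}\) that is uniform in \(n\), in the flat norm, which is the topology metrising weak convergence of integral currents with bounded \({\sf N}\) (Theorem \ref{thm:weak_conv_and_flat_norm}). Your proposal never invokes the isoperimetric inequality, and without it (or an equivalent tail estimate) the identity \(T=\sum_i T_i\) for the limit family is unjustified. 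Once that tail estimate is imported, your minimisation of \(\sum_i{\sf N}(T_i)^2\) does go through and provides a workable alternative to the paper's maximised functional.
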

\begin{proof}
By taking Remark \ref{rmk:embedd_ell_infty}, Corollary \ref{cor:decomp_embedd},
and Remark \ref{rmk:indecomp_embedd} into account, we can assume without loss
of generality that \(\X\) is a Banach space. Fix any constant
\(\alpha\in\big(1,\frac{k+1}{k}\big)\) and let us denote
\(\theta\coloneqq(k+1)/(\alpha k)-1>0\). We also define the family
\(\mathcal P\) as
\[
\mathcal P\coloneqq\bigg\{\{T_i\}_{i\in\N}\subseteq\mathscr I_k(\X)\;\bigg|\;
T=\sum_{i\in\N}T_i,\;{\sf N}(T)=\sum_{i\in\N}{\sf N}(T_i)\bigg\}.
\]
Notice that \(\mathcal P\) is non-empty, as it contains \(\{T,0,0,\ldots\}\).
To achieve the claim, we aim to show that
\begin{equation}\label{eq:max_pb}
M\coloneqq\sup\bigg\{\sum_{i\in\N}{\sf F}(T_i)^{\nicefrac{1}{\alpha}}
\;\bigg|\;\{T_i\}_{i\in\N}\in\mathcal P\bigg\}
\end{equation}
is finite and attained.\\
{\color{blue}\textsc{Step 1:} Compactness argument and lower semicontinuity.}
Choose any \(\big(\{T^n_i\}_{i\in\N}\big)_{n\in\N}\subseteq\mathcal P\)
such that \(\lim_n\sum_{i\in\N}{\sf F}(T^n_i)^{\nicefrac{1}{\alpha}}=M\).
Without loss of generality, we can suppose that for any \(n\in\N\) the
sequence \(\big({\sf N}(T^n_i)\big)_{i\in\N}\) is non-increasing.
Observe that the very definition of \(\mathcal P\) gives
\begin{equation}\label{eq:sum_N}
{\sf N}(T^n_i)\leq\sum_{j\in\N}{\sf N}(T^n_j)\leq{\sf N}(T),
\quad\text{ for every }i,n\in\N.
\end{equation}
The measures \(\|T\|\) and \(\|\partial T\|\) are tight (recall that
the cardinality of \(\X\) is an Ulam number by assumption). Given that
\(\|T^n_i\|\leq\|T\|\) and \(\|\partial T^n_i\|\leq\|\partial T\|\)
for every \(i,n\in\N\) by Proposition \ref{prop:alt_sum_N}, we deduce
that for any \(i\in\N\) the sequences \(\big(\|T^n_i\|\big)_n\) and
\(\big(\|\partial T^n_i\|\big)_n\) are tight, thus the compactness
properties of \(\mathscr N_k(\X)\) and the closure of \(\mathscr I_k(\X)\) (cf.\ \cite[Theorem 5.2]{AK} and \cite[Theorem 8.5]{AK}) grant
the existence of a sequence \((T_i)_{i\in\N}\subseteq\mathscr I_k(\X)\)
such that (up to a not relabelled subsequence in \(n\)) it holds that
\(T^n_i\rightharpoonup T_i\) as \(n\to\infty\) for every \(i\in\N\).
By using \eqref{eq:cont_bdry}, we deduce that
\(\partial T^n_i\rightharpoonup\partial T_i\) as \(n\to\infty\),
thus the lower semicontinuity of \(\sf M\) yields
\[
{\sf N}(T_i)={\sf M}(T_i)+{\sf M}(\partial T_i)
\leq\limi_{n\to\infty}{\sf M}(T^n_i)+\limi_{n\to\infty}{\sf M}(\partial T^n_i)
\leq\limi_{n\to\infty}{\sf N}(T^n_i),\quad\text{ for every }i\in\N.
\]
In particular, by using Fatou's lemma we obtain that
\begin{equation}\label{eq:sum_N_Ti}
\sum_{i\in\N}{\sf N}(T_i)\leq\sum_{i\in\N}\limi_{n\to\infty}{\sf N}(T^n_i)
\leq\limi_{n\to\infty}\sum_{i\in\N}{\sf N}(T^n_i)\overset{\eqref{eq:sum_N}}\leq{\sf N}(T).
\end{equation}
Moreover, by applying Theorem \ref{thm:weak_conv_and_flat_norm} we deduce that
\(\lim_n{\sf F}(T^n_i)={\sf F}(T_i)\) for every \(i\in\N\).\\
{\color{blue}\textsc{Step 2:} Uniform estimates of tails.} We claim that
\begin{equation}\label{eq:estim_tail}
\lim_{j\to\infty}\lims_{n\to\infty}\sum_{i\geq j}{\sf F}(T^n_i)=
\lim_{j\to\infty}\lims_{n\to\infty}\sum_{i\geq j}{\sf F}(T^n_i)^{\nicefrac{1}{\alpha}}=0.
\end{equation}
Given that the sequence \(i\mapsto{\sf N}(T^n_i)\) is non-increasing for any \(n\in\N\),
one has that
\begin{equation}\label{eq:N_decreas}
j\,{\sf N}(T^n_j)\leq\sum_{i\leq j}{\sf N}(T^n_i)\overset{\eqref{eq:sum_N}}\leq
{\sf N}(T),\quad\text{ for every }j,n\in\N.
\end{equation}
Thanks to \eqref{eq:N_decreas}, we can choose \(j_0\in\N\) such that
\(D_k\,{\sf N}(T^n_j)^{\nicefrac{k+1}{k}}\leq 1\)
for every \(n\in\N\) and \(j\geq j_0\), with \(D_k\) as in Corollary
\ref{cor:conseq_isoper}. Then for any \(n\in\N\) and \(j\geq j_0\)
it holds \({\sf F}(T^n_j)\leq 1\) by \eqref{eq:conseq_isoper}, whence
\[\begin{split}
\sum_{i\geq j}{\sf F}(T^n_i)&\overset{\phantom{\eqref{eq:sum_N}}}\leq
\sum_{i\geq j}{\sf F}(T^n_i)^{\nicefrac{1}{\alpha}}
\overset{\eqref{eq:conseq_isoper}}\leq
D_k^{\nicefrac{1}{\alpha}}\sum_{i\geq j}{\sf N}(T^n_i)^{\theta+1}
\overset{\eqref{eq:N_decreas}}\leq\frac{D_k^{\nicefrac{1}{\alpha}}
{\sf N}(T)^\theta}{j^\theta}\sum_{i\geq j}{\sf N}(T^n_i)\\
&\overset{\eqref{eq:sum_N}}\leq\frac{D_k^{\nicefrac{1}{\alpha}}
{\sf N}(T)^{\theta+1}}{j^\theta}\eqqcolon\lambda_j.
\end{split}\]
Consequently, we have that \(\lims_n\sum_{i\geq j}{\sf F}(T^n_i)\leq
\lims_n\sum_{i\geq j}{\sf F}(T^n_i)^{\nicefrac{1}{\alpha}}\leq\lambda_j\).
Since \(\lim_j\lambda_j=0\), we conclude that \eqref{eq:estim_tail} is verified.
As a consequence, it holds that \(\sum_{i\geq j}{\sf F}(T_i)\leq\lambda_j\)
for all \(j\geq j_0\) and
\begin{equation}\label{eq:T_i_max}
\sum_{i\in\N}{\sf F}(T_i)^{\nicefrac{1}{\alpha}}
=\lim_{n\to\infty}\sum_{i\in\N}{\sf F}(T^n_i)^{\nicefrac{1}{\alpha}}=M<\infty.
\end{equation}
{\color{blue}\textsc{Step 3:} \(\{T_i\}_{i\in I}\) belongs to \(\mathcal P\).}
We aim to prove that \(T=\sum_{i\in\N}T_i\). Fix any \(j\geq j_0\) and \(n\in\N\).
Notice that
\begin{equation*}
{\sf F}\left(T-\sum_{i\in\N}T_i\right)={\sf F}\left(\sum_{i\in\N}T^n_i-T_i\right)
\leq\sum_{i<j}{\sf F}(T^n_i-T_i)+\sum_{i\geq j}{\sf F}(T^n_i)+
\sum_{i\geq j}{\sf F}(T_i)
\end{equation*}
for every \(n\in\N\), thus accordingly
\[
{\sf F}\bigg(T-\sum_{i\in\N}T_i\bigg)
\leq\lambda_j+\lims_{n\to\infty}\sum_{i<j}{\sf F}(T^n_i-T_i)
+\lims_{n\to\infty}\sum_{i\geq j}{\sf F}(T^n_i)
\leq 2\lambda_j+\sum_{i<j}\lims_{n\to\infty}{\sf F}(T^n_i-T_i)=2\lambda_j.
\]
By letting \(j\to\infty\) we conclude that \({\sf F}\big(T-\sum_{i\in\N}T_i\big)=0\),
which means that \(T=\sum_{i\in\N}T_i\). Observe that
\({\sf N}(T)=\sum_{i\in\N}{\sf N}(T_i)\), as one inequality is granted by
\eqref{eq:mass_subadd}, while the converse one by \eqref{eq:sum_N_Ti}.
Finally, we have that \(\sum_{i\in\N}{\sf F}(T_i)^{\nicefrac{1}{\alpha}}=M\)
as a consequence of \eqref{eq:T_i_max}.\\
{\color{blue}\textsc{Step 4:} Indecomposability of \(T_i\).}
We argue by contradiction: suppose that the current \(T_j\) is decomposable for
some index \(j\in\N\). Then we can find two non-zero currents \(R,S\in\mathscr I_k(\X)\)
such that \(T_j=R+S\) and \({\sf N}(T_j)={\sf N}(R)+{\sf N}(S)\), which guarantees
that \(\{T_i\}_{i\in\N\setminus\{j\}}\cup\{R,S\}\in\mathcal P\).
On the other hand, since \(\sf F\) is subadditive and
\((0,\infty)\ni t\mapsto t^{\nicefrac{1}{\alpha}}\) is strictly concave,
we conclude that
\[
{\sf F}(R)^{\nicefrac{1}{\alpha}}+{\sf F}(S)^{\nicefrac{1}{\alpha}}
+\sum_{i\in\N\setminus\{j\}}{\sf F}(T_i)^{\nicefrac{1}{\alpha}}
>{\sf F}(R+S)^{\nicefrac{1}{\alpha}}
+\sum_{i\in\N\setminus\{j\}}{\sf F}(T_i)^{\nicefrac{1}{\alpha}}
=\sum_{i\in\N}{\sf F}(T_i)^{\nicefrac{1}{\alpha}}=M,
\]
which contradicts the maximality of \(M\). Therefore, the statement
is eventually achieved.
\end{proof}

\begin{remark}{\rm Some comments are in order:
\begin{itemize}
\item[\( (\rm i)\)] The decomposition given by Theorem \ref{thm:decomp} is in general not unique, as can be seen considering the currents associated with two perpendicular long segments intersecting at their midpoints.  Moreover, there is no maximality property for the components similar to \cite[Theorem 1]{ACMM}.
\item[\( (\rm ii)\)]
Contrarily to the case of finite perimeter sets considered in \cite{ACMM}, corresponding essentially to $d$-dimensional currents in $\R^d$ (for which the mass norm and the flat norm coincide), it is not possible to replace the flat norm in \eqref{eq:max_pb} with the mass norm. As an example, we can consider a countable sum of loops of lengths $\ell_i$, where $\sum_i \ell_i<\infty$ but $\sum_i \ell_i^\beta=\infty$ for every $\beta<1$.\fr
\end{itemize}
}\end{remark}
\section{An alternative proof of the decomposition}\label{sec:alternative_decomp}
We present a second proof of Theorem \ref{thm:decomp}, which is based on a na\"{i}ve approach: if $T$ is decomposable then we keep splitting it in two pieces, until we can not split them anymore. To make sure that such a process ends we actually ensure that at every step we remove an indecomposable component with a significant mass, which is the content of the following lemma. 
\begin{lemma}[Existence of a big indecomposable component]\label{lem:big_component}
Let \((\X,\sfd)\) be a complete metric space. Let \(T\in \mathscr{I}_k(\X)\)
be given. Then there exists \(T_1\in  \mathscr I_k(\X)\) such that \({\sf N}(T)={\sf N}(T_1)+ {\sf N}(T-T_1)\), \(T_1\) is indecomposable, and \({\sf N}(T_1)\geq D_k^{-k} \left(\tfrac{{\sf F}(T)}{{\sf N}(T)}\right)^{k}\).
\end{lemma}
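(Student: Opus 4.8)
The plan is to reduce to the case where \(\X\) is a Banach space — exactly as in the proof of Theorem \ref{thm:decomp}, via the Kuratowski embedding (Remark \ref{rmk:embedd_ell_infty}) together with Lemma \ref{lem:iota_sharp_bij}, Corollary \ref{cor:decomp_embedd} and Remark \ref{rmk:indecomp_embedd}, which transport \(T\) and the sought component back and forth while preserving \({\sf N}\), integrality and indecomposability — so that the isoperimetric consequence Corollary \ref{cor:conseq_isoper} and the flat-norm metrization Theorem \ref{thm:weak_conv_and_flat_norm} become available. We may assume \(T\neq 0\). If \(T\) is indecomposable we simply take \(T_1=T\): then \({\sf N}(T)={\sf N}(T_1)+{\sf N}(0)\), and the required estimate \({\sf N}(T)\geq D_k^{-k}\big({\sf F}(T)/{\sf N}(T)\big)^k\) is nothing but Corollary \ref{cor:conseq_isoper} rewritten, since \({\sf F}(T)\leq D_k{\sf N}(T)^{\nicefrac{k+1}{k}}\) gives \(D_k^{-k}({\sf F}(T)/{\sf N}(T))^k\leq{\sf N}(T)\). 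In general, calling \(R\) a \emph{component} of \(T\) whenever \({\sf N}(T)={\sf N}(R)+{\sf N}(T-R)\), I set
\[
m\coloneqq\sup\big\{{\sf N}(R)\;:\;R\text{ is an indecomposable component of }T\big\},
\]
and I aim to produce an indecomposable component \(T_1\) with \({\sf N}(T_1)=m\) together with \(m\geq D_k^{-k}\big({\sf F}(T)/{\sf N}(T)\big)^k\); since a component automatically satisfies the normal-mass splitting, this yields all three assertions at once.

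The key estimate is equivalent to \({\sf F}(T)\leq D_k\,m^{\nicefrac{1}{k}}\,{\sf N}(T)\). Conceptually it comes from decomposing \(T=\sum_i U_i\) into indecomposable components with \(\sum_i{\sf N}(U_i)={\sf N}(T)\): each \({\sf N}(U_i)\leq m\), so Corollary \ref{cor:conseq_isoper} gives \({\sf F}(U_i)\leq D_k{\sf N}(U_i)^{\nicefrac{k+1}{k}}\leq D_k m^{\nicefrac{1}{k}}{\sf N}(U_i)\), and summing with subadditivity of \({\sf F}\) produces \({\sf F}(T)\leq\sum_i{\sf F}(U_i)\leq D_k m^{\nicefrac{1}{k}}{\sf N}(T)\) (and \(m\) is attained by the heaviest \(U_i\), as \(\sum_i{\sf N}(U_i)<\infty\)). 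This shortcut invokes Theorem \ref{thm:decomp}; to keep the section self-contained I would instead argue through the auxiliary functional \(g(R)\coloneqq{\sf F}(R)-D_k m^{\nicefrac{1}{k}}{\sf N}(R)\), which is \(\leq 0\) on every indecomposable component (isoperimetric inequality plus \({\sf N}\leq m\)) and satisfies \(g(A)+g(B)\geq g(A+B)\) whenever \(A+B\) splits normal mass (subadditivity of \({\sf F}\), additivity of \({\sf N}\)). If \(g(T)>0\), then a violation \(g(R)>0\) descends to a strictly lighter component at each splitting, so analysing a minimal-mass violating component — which is necessarily decomposable, hence splits off an even lighter violating piece — yields a contradiction, giving \(g(T)\leq 0\).

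The delicate point, and the one I expect to be the main obstacle, is attainment of \(m\) by an \emph{indecomposable} component (this genuinely matters, since if \(m\) equalled \(D_k^{-k}({\sf F}(T)/{\sf N}(T))^k\) without being attained there would be no admissible \(T_1\)). Along a maximizing sequence \((R_n)\) of indecomposable components one has \(\|R_n\|\leq\|T\|\) and \(\|\partial R_n\|\leq\|\partial T\|\) (Proposition \ref{prop:alt_sum_N}), so tightness together with the compactness and closure theorems for integral currents yields \(R_n\rightharpoonup R_\infty\in\mathscr I_k(\X)\) up to subsequences; the set of components is weakly closed, because lower semicontinuity of \({\sf N}\) against the automatic inequality \({\sf N}(T)\leq{\sf N}(R_\infty)+{\sf N}(T-R_\infty)\) forces equality, so \(R_\infty\) is again a component. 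The two difficulties are that the normal mass may drop in the limit, so that \({\sf N}(R_\infty)<m\), and that indecomposability need not survive weak convergence. Both are attacked through the flat norm, for which \(R_n\rightharpoonup R_\infty\) means \({\sf F}(R_n-R_\infty)\to 0\) on sets of bounded normal mass (Theorem \ref{thm:weak_conv_and_flat_norm}): should \(R_\infty\) be decomposable or deficient in mass, one splits off an indecomposable piece of \(R_\infty\) and compares it with the \(R_n\), which ultimately forces an indecomposable component of normal mass exactly \(m\). Once such a \(T_1\) is secured, the three conclusions follow as indicated in the first paragraph.
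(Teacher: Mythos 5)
Your route is genuinely different from the paper's, and the two points you yourself flag as delicate are genuine gaps rather than technicalities. The paper does not maximise \({\sf N}\) over indecomposable components: it \emph{minimises}, over ``good partitions'' \(T=\sum_n T_n\) with \({\sf N}(T)=\sum_n{\sf N}(T_n)\) and \({\sf N}(T_n)\) non-increasing, the normal mass of the \emph{largest} element. With that choice the key estimate is a one-liner valid for \emph{every} good partition, namely \({\sf F}(T)\leq\sum_n{\sf F}(T_n)\leq D_k\,{\sf N}(T_1)^{1/k}\sum_n{\sf N}(T_n)=D_k\,{\sf N}(T_1)^{1/k}{\sf N}(T)\), so the infimum inherits the bound with no descent argument. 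Your self-contained substitute via \(g(R)={\sf F}(R)-D_k m^{1/k}{\sf N}(R)\) does not close: the descent only shows that every violating component strictly contains a lighter violating component, and an infinite strictly decreasing sequence of normal masses is not a contradiction. The ``minimal-mass violating component'' you analyse is not known to exist, and compactness will not produce it: along a minimising sequence of violating components \({\sf F}\) passes to the limit (Theorem \ref{thm:weak_conv_and_flat_norm}) while \({\sf N}\) is only lower semicontinuous, and these combine to give merely \(g(R_\infty)\geq 0\), so the limit may fail to violate. The same degeneration (\(g>0\) is an open condition that closes up to \(g\geq 0\)) blocks the obvious repairs.

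The second gap is the attainment of \(m\). You correctly observe that if \(m\) equals the threshold \(D_k^{-k}({\sf F}(T)/{\sf N}(T))^k\) without being attained the lemma is not proved, and you correctly name the two obstructions (normal mass can drop in the weak limit, and indecomposability is not weakly closed), but ``one splits off an indecomposable piece of \(R_\infty\) and compares it with the \(R_n\)'' is not an argument --- producing an indecomposable piece of controlled size is precisely the content of the lemma. The paper's extremal problem is designed to avoid exactly this: after grouping components of normal mass below \(m/2\), every competitor has at most \(2{\sf N}(T)/m+1\) elements, so compactness applies componentwise, the infimum is attained, and the heaviest element of the minimiser must be indecomposable, since otherwise splitting all the elements of maximal normal mass would strictly lower the maximum. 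Finally, your ``shortcut'' --- decompose \(T=\sum_i U_i\) via Theorem \ref{thm:decomp} and take the heaviest \(U_i\), which exists because \(\sum_i{\sf N}(U_i)<\infty\) and satisfies \({\sf F}(T)\leq D_k\max_i{\sf N}(U_i)^{1/k}{\sf N}(T)\) --- is in fact a complete proof of the lemma, relying on the first (Section \ref{sec:decomp}) proof of Theorem \ref{thm:decomp}; but it cannot be used here, since this lemma is the engine of the \emph{independent} second proof of that very theorem.
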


\begin{proof} As in the Proof of Theorem \ref{thm:decomp}, we assume without loss of generality that $\X$ is a Banach space. 

{\color{blue}\textsc{Step 1:} Lower bound on the biggest component.} Let \(
m:=\inf\left\{ {\sf N}(T_1): (T_n)_{n} \in \G(T)\right\}
\),
where \(\G(T)\) denotes the family of good partitions of \(T\), that is those finite or countable sequences \((T_n)_{n=1}^N\) (possibly with \(N=\infty\)) such that 
\[
T=\sum_n T_n,\qquad {\sf N}(T)=\sum_n {\sf N}(T_n),
\]
and such that \({\sf N}(T_n)\) is non-increasing in $n$. We claim that \(m\geq D_k^{-k}\left(\tfrac{F(T)}{N(T)}\right)^{k}\). Indeed by Corollary \ref{cor:conseq_isoper}, for every good partition \((T_n)_n\) we have
\begin{equation}\label{eq:lower_bound_m}
{\sf F}(T)\leq \sum_n {\sf F}(T_n)\leq \sum_n D_k {\sf N}(T_n)^\frac{k+1}{k}\leq D_k {\sf N}(T_1)^\frac{1}{k}\sum_n {\sf N }(T_n)= D_k {\sf N}(T_1)^\frac{1}{k} {\sf N}(T).
\end{equation}

{\color{blue}\textsc{Step 2:} Grouping together small components.} Let \(\F(T)\) be the family of those good partitions \((T_n)_{n=1}^N\) such that \({\sf N}(T_n)\geq \tfrac{m}{2}\) for every \(n=1,\ldots, N\), except for at most one index. Then \(\F(T)\) is nonempty since \((T)\) belongs to \(\F(T)\), and 
\begin{equation}\label{eq:second_bullet}
(N-1)\frac{m}{2}\leq \sum_{n=1}^N {\sf N}(T_n)={\sf N}(T).
\end{equation}  
Therefore the number of components of a good partition in \(\F(T)\) is equibounded once we fix \(T\). 
Moreover, from every good partition in \(\mathcal{G}(T)\) we can obtain a good partition in \(\F(T)\) just grouping different currents together if their norm is below \(\tfrac{m}{2}\), and we can make it so that the biggest norm among all the elements remains the same. Indeed, given \((T_n)_n\in \G(T)\), let \(a_n:={\sf N}(T_n)\), so that \((a_n)_n\) is non-increasing. By assumption \(\sum_n a_n<\infty\). From elementary considerations we can find indices \(1\leq n_1< \ldots < n_p\leq\infty\) such that $a_n\geq m$ for every $1\leq n \leq n_1$ and 
    \begin{align*}
    & \frac{m}{2}\leq \sum_{n=n_j+1}^{n_{j+1}} a_n\leq m,\quad\text{ for every \(j=1,\ldots, p-1\)},\\
    & \sum_{n>n_p} a_n\leq \frac{m}{2}.
    \end{align*}
    Accordingly, we define 
    \begin{align*}
    & \tilde T_j:=\sum_{n=n_j+1}^{n_{j+1}} T_n,\quad\text{ for every \(j=1,\ldots, p-1\)},\\
    & \tilde T_p:=\sum_{n>n_p} T_n.
    \end{align*}
    Then \((T_n)_{n=1}^{n_1}\cup(\tilde T_j)_{j=1}^p\) defines (up to reordering) a good partition in \(\mathcal{F}(T)\).
    In particular,
    \begin{equation}\label{eq:inf_very_good}
    \inf\left\{ {\sf N}(T_1): (T_n)\in \F(T)\right\}
    \end{equation}
    also coincides with \(m\).

{\color{blue}\textsc{Step 3:} Existence of a minimum for \eqref{eq:inf_very_good}.} Let us consider a minimising sequence for \eqref{eq:inf_very_good}, indexed by \(j\in\N\), of good partitions in \(\F(T)\), denoted \((T_n^{j})_{n=1}^N\). We can assume that, without loss of generality, the cardinality of each partition is the same number \(N\). This is possible, up to extracting a subsequence, thanks to \eqref{eq:second_bullet}. By compactness of integral currents (cf.\ \cite[Theorem 5.2]{AK} and \cite[Theorem 8.5]{AK}; notice that every element of the partition is a subcurrent of \(T\), and thus the equitightness of the mass measures is satisfied) we obtain a subsequence (not relabelled) such that \(T_n^j\rightharpoonup T_n^\infty\) as \(j\to \infty\). We infer that \(T=\sum_{n=1}^N T_n^\infty\), hence
\[
{\sf N}(T)\leq \sum_{n=1}^N {\sf N}(T_n^\infty) \leq \sum_{n=1}^N \liminf_{j\to\infty} {\sf N}(T_n^j)\leq \liminf_{j\to\infty} \sum_{n=1}^N {\sf N}(T_n^j)={\sf N}(T).
\]
Therefore all inequalities are equalities, and in particular \((T_n^\infty)_{n=1}^N\) is a good partition of \(T\). By lower semicontinuity the infimum in \eqref{eq:inf_very_good} is thus achieved.

{\color{blue}\textsc{Step 4:} Finding a big indecomposable component.} We claim that at least one among the currents \(T_n^\infty\) with normal mass \(m\) is indecomposable. If not, we would obtain a good partition having a strictly lower energy than the infimum in \eqref{eq:inf_very_good}, which is impossible. This concludes the proof.
\end{proof}
\begin{proof}[Alternative proof of Theorem \ref{thm:decomp}]
We set \(S_0:=T\), and inductively define \(T_n\) and \(S_n\), \(n\geq 0\), as follows: we take \(T_n\) as an indecomposable component of \(S_n\) with maximal norm, and then we set \(S_{n+1}=S_n-T_n\). In particular by Lemma \ref{lem:big_component}
\begin{equation}\label{eq:lower_bound_N_component}
    {\sf N}(T_n)\geq D_k^{-k} \left(\frac{{\sf F}(S_n)}{{\sf N}(S_n)}\right)^{k}\geq D_k^{-k} \left(\frac{{\sf F}(S_n)}{{\sf N}(T)}\right)^{k}.
\end{equation}
Either \(S_n\) is indecomposable for some \(n\in\N\), and we stop, in which case we immediately obtain the desired decomposition; or we keep going for every \(n\in \N\). In the latter case, we observe that \({\sf F}(S_n)\to 0\) as \(n\to \infty\). 
This follows directly from \eqref{eq:lower_bound_N_component}, because \(\sum_n {\sf N}(T_n)\leq {\sf N}(T)<\infty\), and therefore \({\sf N}(T_n)\to 0\) as \(n\to\infty\).
Thus
\(T=\sum_{n=0}^\infty T_n\)
and for every \(p\in \N\)
\[
{\sf N}(T)\geq {\sf N}(S_p) +\sum_{n=0}^p {\sf N}(T_n)\geq \sum_{n=0}^p {\sf N}(T_n).
\]
Passing to the limit as \(p\to \infty\) we obtain additivity of the normal mass, and thus \(T=\sum_n T_n\) is the desired decomposition.
\end{proof}
\begin{remark}{\rm
As in the first proof of Theorem \ref{thm:decomp}, the components obtained above are in general not uniquely determined, even up to reordering.
\fr}\end{remark}
\section{Characterisation of indecomposable integral \texorpdfstring{\(1\)}{1}-currents}
\label{sec:one_indec}%
In this section, we focus our attention on integral metric \(1\)-currents.
With the aid of the Decomposition Theorem \ref{thm:decomp}, we can provide
a full characterisation of the indecomposable integral \(1\)-currents in
an arbitrary complete metric space. More specifically, we show that they
are exactly those currents induced by a Lipschitz curve which is either
injective or an injective loop.
\begin{lemma}[Indecomposable and non-cancelling implies injective]\label{lem:indec_implies_simple}
Let \((\X,\sfd)\) be a complete metric space and $\gamma:[0,1]\to \X$ a Lipschitz curve. Suppose that $[\![\gamma]\!]$ is indecomposable and that $\M([\![\gamma]\!])=\ell(\gamma)$. Then $\gamma$ is either injective or an injective loop.
\end{lemma}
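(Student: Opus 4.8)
The plan is to argue by contraposition at the level of the current: if $\gamma$ is neither injective nor an injective loop, I will split $[\![\gamma]\!]$ into two non-zero integral pieces whose normal masses add up, contradicting indecomposability. A preliminary reduction is convenient. Assuming $\ell(\gamma)>0$ (otherwise $[\![\gamma]\!]=0$ and the case is degenerate), I replace $\gamma$ by a constant-speed reparametrisation on $[0,1]$: this changes neither $[\![\gamma]\!]$ (the currentification is invariant under monotone reparametrisation, by a change of variables in its defining integral) nor $\M([\![\gamma]\!])$ nor $\ell(\gamma)$, so all the hypotheses are preserved. Hence I may assume $\gamma$ has constant speed, which ensures $\ell(\gamma|_{[c,d]})>0$ for all $c<d$; this positivity is exactly what will prevent the pieces below from vanishing.

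So suppose $\gamma$ is not injective and fix $0\le a<b\le 1$ with $\gamma(a)=\gamma(b)$. Using additivity of currentification under concatenation (immediate from $\int_0^1=\int_0^a+\int_a^b+\int_b^1$), I write $[\![\gamma]\!]=R+S$ with $R:=[\![\gamma|_{[a,b]}]\!]$ and $S:=[\![\gamma|_{[0,a]}]\!]+[\![\gamma|_{[b,1]}]\!]$; both lie in $\mathscr I_1(\X)$ since it is a $\mathbb Z$-module. As $\gamma(a)=\gamma(b)$, a direct computation of boundaries gives $\partial R=\delta_{\gamma(b)}-\delta_{\gamma(a)}=0$ and $\partial S=\delta_{\gamma(1)}-\delta_{\gamma(0)}=\partial[\![\gamma]\!]$, so $\M(\partial R)+\M(\partial S)=\M(\partial[\![\gamma]\!])$ already, without any cancellation.

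The heart of the proof is to upgrade this to additivity of the full normal mass, and this is where $\M([\![\gamma]\!])=\ell(\gamma)$ enters. From the general bounds $\M(R)\le\ell(\gamma|_{[a,b]})$ and $\M(S)\le\ell(\gamma|_{[0,a]})+\ell(\gamma|_{[b,1]})$, the subadditivity $\M([\![\gamma]\!])\le\M(R)+\M(S)$, and additivity of length over subintervals, I obtain
\[
\ell(\gamma)=\M([\![\gamma]\!])\le\M(R)+\M(S)\le\ell(\gamma|_{[0,a]})+\ell(\gamma|_{[a,b]})+\ell(\gamma|_{[b,1]})=\ell(\gamma),
\]
forcing equality everywhere. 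In particular $\M(R)+\M(S)=\M([\![\gamma]\!])$, whence $\N(R)+\N(S)=\N([\![\gamma]\!])$ by the boundary computation, and moreover $\M(R)=\ell(\gamma|_{[a,b]})$ and $\M(S)=\ell(\gamma|_{[0,a]})+\ell(\gamma|_{[b,1]})$.

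It remains to extract the dichotomy. By constant speed $\M(R)=\ell(\gamma|_{[a,b]})>0$, so $R\ne0$. If the coincidence is not the pair $(a,b)=(0,1)$, then $\ell(\gamma|_{[0,a]})+\ell(\gamma|_{[b,1]})>0$, hence $S\ne0$ and $[\![\gamma]\!]=R+S$ is a genuine decomposition with additive normal mass, contradicting indecomposability. Therefore the only pair $a<b$ with $\gamma(a)=\gamma(b)$ is $(0,1)$; equivalently $\gamma(0)=\gamma(1)$ and $\gamma|_{[0,1)}$ is injective, i.e.\ $\gamma$ is an injective loop. Thus $\gamma$ is injective or an injective loop. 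I expect the main obstacle to be precisely the non-vanishing of the two pieces: a \emph{trivial} coincidence (e.g.\ one arising from an interval of constancy) would make one of them zero and destroy the argument, which is why the reduction to constant speed is the crucial preliminary; the analytic core is then the equality-in-subadditivity step, which converts the scalar hypothesis $\M([\![\gamma]\!])=\ell(\gamma)$ into the structural identity $\N(R)+\N(S)=\N([\![\gamma]\!])$.
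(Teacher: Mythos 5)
Your proof is correct and follows essentially the same route as the paper's: reduce to constant speed, split $[\![\gamma]\!]$ at a self-intersection into two sub-curve currents, and use the hypothesis $\M([\![\gamma]\!])=\ell(\gamma)$ together with equality in the subadditivity of mass to get ${\sf N}$-additivity, contradicting indecomposability. The only differences are cosmetic: you treat all coincidences $\gamma(a)=\gamma(b)$ with $(a,b)\neq(0,1)$ by one uniform splitting where the paper distinguishes two cases, and you spell out the equality-in-subadditivity step that the paper leaves implicit.
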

\begin{proof}
Without loss of generality, we assume $\gamma$ is a constant-speed Lipschitz curve. Let us consider separately two cases. 
\begin{itemize}
	\item[\( (\rm i)\)]Suppose \(\gamma(s)=\gamma(t)\) for some \(s,t\in(0,1)\), with \(s<t\). Consider the currents
\(R\coloneqq[\![\gamma|_{[0,s]}]\!]+[\![\gamma|_{[t,1]}]\!]\)
and \(S\coloneqq[\![\gamma|_{[s,t]}]\!]\). Notice that $R,S$ are both
non-trivial, since $0<s<t<1$ and $\gamma$ is parametrized by constant-speed.
Since \([\![\gamma]\!]=R+S\) and \({\sf N}([\![\gamma]\!])={\sf N}(R)+{\sf N}(S)\),
we deduce that \([\![\gamma]\!]\) is decomposable.
\item[\( (\rm ii)\)] Suppose there exists \(t\in(0,1)\) such that
\(\gamma(t)\in\big\{\gamma(0),\gamma(1)\big\}\). Similarly to the previous case, consider the non-null
currents \(R\coloneqq[\![\gamma|_{[0,t]}]\!]\) and
\(S\coloneqq[\![\gamma|_{[t,1]}]\!]\). Since \([\![\gamma]\!]=R+S\)
and \({\sf N}([\![\gamma]\!])={\sf N}(R)+{\sf N}(S)\),
we deduce that \([\![\gamma]\!]\) is decomposable. \qedhere 
\end{itemize}
\end{proof}

Before passing to the main decomposition result of this section
(Theorem \ref{thm:opt_repr_1-curr}), let us recall its suboptimal
version obtained by Ambrosio--Wenger \cite{AW11} (see also
\cite{Wenger1} for the boundaryless case).
\begin{lemma}[Almost optimal representation of integral \(1\)-currents
\cite{AW11}]\label{lem:almost_opt_repr_1-curr}
Let \((\X,\sfd)\) be a complete, length metric space. Fix any
\(T\in\mathscr I_1(\X)\) and \(\varepsilon>0\). Then there exist finitely many
\(1\)-Lipschitz curves \(\gamma_i\colon[0,a_i]\to\X\), \(i=1,\ldots,n\),
such that \(\partial T=\sum_{i=1}^n\partial[\![\gamma_i]\!]\),
\({\sf M}(\partial T)=\sum_{i=1}^n{\sf M}(\partial[\![\gamma_i]\!])\), and
\[
{\sf M}\big(T-{\textstyle\sum_{i=1}^n}[\![\gamma_i]\!]\big)\leq
\varepsilon\,{\sf M}(T),\qquad{\textstyle\sum_{i=1}^n}a_i\leq
(1+\varepsilon)\,{\sf M}(T).
\]
\end{lemma}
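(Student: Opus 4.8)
Looking at the statement of Lemma~\ref{lem:almost_opt_repr_1-curr}, I need to prove an almost-optimal representation result for integral $1$-currents in complete length metric spaces, but this is explicitly attributed to Ambrosio--Wenger \cite{AW11}. My task is therefore to reconstruct how such a result would be proved, citing the source.

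\begin{proof}[Proof sketch]
The plan is to reduce the statement to the deformation-type results already available in the literature, and then to organise the resulting curves carefully so as to control the boundary exactly. First I would recall the structure of the boundary: since $T\in\mathscr I_1(\X)$ is integral, $\partial T$ is an integral $0$-current, hence a finite integer combination of Dirac masses $\partial T=\sum_{\ell} m_\ell\,\delta_{x_\ell}$ with $\sum_\ell m_\ell=0$ (the latter because $\partial(\partial T)=0$). The number $2\,{\sf M}(\partial T)^{-1}\cdot{\sf M}(\partial T)$ of ``endpoints'' to be matched is thus $\sum_\ell|m_\ell|={\sf M}(\partial T)$, and the first task is to pair these up into source/sink couples so that the curves $\gamma_i$ we build will have $\partial[\![\gamma_i]\!]=\delta_{y_i}-\delta_{z_i}$ realising exactly this boundary with no cancellation, giving ${\sf M}(\partial T)=\sum_i{\sf M}(\partial[\![\gamma_i]\!])$.

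The heart of the argument is the approximation in mass. Here I would invoke the construction of Ambrosio--Wenger \cite{AW11} (and, in the boundaryless case, Wenger \cite{wenger_inv}), which in a complete length space produces a finite family of Lipschitz curves approximating $T$ in the flat/mass sense with good length control. Concretely, because $\X$ is a length space one can connect any two points by an almost-geodesic, so after an isometric embedding into a Banach space (Remark~\ref{rmk:embedd_ell_infty}) one has the isoperimetric inequality of Theorem~\ref{thm:isoper} available; this is exactly the tool that drives the polyhedral-type approximation of integral $1$-currents by sums of curves. The output is finitely many curves $\gamma_i$, which after reparametrisation I take to be $1$-Lipschitz on intervals $[0,a_i]$, with $\sum_i[\![\gamma_i]\!]$ close to $T$ in mass and with total length $\sum_i a_i=\sum_i{\sf M}([\![\gamma_i]\!])$ only slightly larger than ${\sf M}(T)$. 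The length bound $\sum_i a_i\le(1+\varepsilon){\sf M}(T)$ comes from the near-optimality of the approximating family together with the lower semicontinuity of mass (each $a_i=\ell(\gamma_i)\ge{\sf M}([\![\gamma_i]\!])$, with equality when the curve is injective).

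The main obstacle is matching the two competing requirements simultaneously: one wants the mass defect ${\sf M}(T-\sum_i[\![\gamma_i]\!])$ and the length excess $\sum_i a_i-{\sf M}(T)$ both controlled by $\varepsilon\,{\sf M}(T)$, while insisting that the boundary be reproduced \emph{exactly} (not merely approximately). The exact boundary condition is delicate because the approximation scheme naturally only controls $\partial T-\sum_i\partial[\![\gamma_i]\!]$ in flat norm; to upgrade this to an exact identity one uses that $\partial T$ is a finite atomic measure with integer weights, so that a flat-norm error below the minimal atom spacing forces the approximating boundary to coincide with $\partial T$ and to exhibit no cancellation. Once this exact matching of endpoints is secured, the remaining estimates are the length and mass bounds, which follow from the near-optimality built into the construction of \cite{AW11}. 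I would present these steps in the order: (1) reduce to a Banach space and record the structure of $\partial T$; (2) apply the Ambrosio--Wenger approximation to get a finite curve family with small flat error and near-optimal length; (3) correct the boundary to an exact match using the discreteness of $\partial T$; (4) collect the mass and length estimates to conclude.
\end{proof}
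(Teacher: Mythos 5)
The paper does not actually prove this lemma: it is stated as a quoted result from \cite{AW11} (Lemma 4.4 there), with no argument supplied. Your decision to defer the substance to the cited source is therefore consistent with how the paper treats the statement, and your outline (structure of $\partial T$ as a finite integer combination of Dirac masses, reduction to a Banach space, a curve-approximation scheme with length control) is in the right spirit.

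That said, two points in your sketch would not survive being turned into an actual proof. First, the quantity ``$2\,{\sf M}(\partial T)^{-1}\cdot{\sf M}(\partial T)$'' is just $2$; presumably you meant that $\sum_\ell|m_\ell|={\sf M}(\partial T)$ endpoints must be paired, but as written the sentence is garbled. Second, and more substantively, your step (3) --- producing curves whose boundary only matches $\partial T$ up to a small flat-norm error and then ``correcting the boundary to an exact match using the discreteness of $\partial T$'' --- is where the real work lies and cannot be dismissed in one line. Discreteness of $\partial T$ only tells you that a small flat boundary error can be filled by curves of small total length; appending those filling curves to your family changes both $\sum_i a_i$ and ${\sf M}\big(T-\sum_i[\![\gamma_i]\!]\big)$, and nothing in your sketch shows that the corrected family still satisfies the two quantitative bounds. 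Moreover the no-cancellation identity ${\sf M}(\partial T)=\sum_{i}{\sf M}(\partial[\![\gamma_i]\!])$ requires that none of the corrected curves become loops or acquire coinciding endpoints, which a generic correction does not guarantee. In \cite{AW11} the boundary is matched exactly by construction (the curves are extracted from a decomposition of $T$ whose boundary atoms are prescribed from the outset), not repaired a posteriori. Finally, note the circularity issue the paper itself flags after Corollary \ref{cor:charact_indecom_one-curr}: the proof of \cite[Lemma 4.4]{AW11} already relies on Federer's Euclidean decomposition claim (without injectivity), so any honest reconstruction must take that result --- not the isoperimetric inequality alone --- as its real engine.
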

By combining Theorem \ref{thm:decomp} and Lemma \ref{lem:almost_opt_repr_1-curr}
with a compactness argument (based upon the Arzel\`{a}--Ascoli-type result
stated in Lemma \ref{lem:AA}), we can obtain the following optimal
representation theorem for integral metric \(1\)-currents.
\begin{theorem}[Optimal representation of integral \(1\)-currents]
\label{thm:opt_repr_1-curr}
Let \((\X,\sfd)\) be a complete metric space. Fix any \(T\in\mathscr I_1(\X)\).
Then there exists a sequence \((\gamma_i)_i\) of injective Lipschitz curves or injective Lipschitz loops in \(\X\) such that
\[
T=\sum_{i\in\N}[\![\gamma_i]\!],\qquad{\sf N}(T)=
\sum_{i\in\N}{\sf N}([\![\gamma_i]\!]).
\]
\end{theorem}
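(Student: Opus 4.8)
The plan is to reduce first to a Banach space, then (via Theorem \ref{thm:decomp}) to a single indecomposable current, and finally to show that an indecomposable integral $1$-current is carried by one injective Lipschitz curve or injective loop. By Remark \ref{rmk:embedd_ell_infty} we may isometrically embed $(\X,\sfd)$ into a Banach space $\Y$ via some $\iota$, and Banach spaces are length (indeed geodesic) spaces, so that Lemma \ref{lem:almost_opt_repr_1-curr} becomes available for $\iota_\# T$. If I produce a representation $\iota_\# T=\sum_i[\![\sigma_i]\!]$ by injective curves or injective loops with ${\sf N}(\iota_\# T)=\sum_i{\sf N}([\![\sigma_i]\!])$, then Proposition \ref{prop:alt_sum_N} forces each $\|[\![\sigma_i]\!]\|=\H^1|_{\sigma_i([0,1])}$ to be concentrated on the closed set $\iota(\X)\supseteq{\rm spt}(\iota_\# T)$, whence by injectivity $\sigma_i([0,1])\subseteq\iota(\X)$ and $\gamma_i:=\iota^{-1}\circ\sigma_i$ is an admissible curve in $\X$ with $\iota_\#[\![\gamma_i]\!]=[\![\sigma_i]\!]$; Corollary \ref{cor:decomp_embedd} and Lemma \ref{lem:iota_sharp_bij} then transport the whole representation back to $\X$. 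I therefore assume $\X$ is a Banach space, apply Theorem \ref{thm:decomp} to write $T=\sum_m S_m$ with $S_m$ indecomposable and ${\sf N}(T)=\sum_m{\sf N}(S_m)$, and reduce to treating a single indecomposable $S\in\mathscr I_1(\X)$.

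The heart of the matter is to upgrade the $\varepsilon$-approximation of Lemma \ref{lem:almost_opt_repr_1-curr} to an exact representation
\[
S=\sum_l[\![\sigma_l]\!],\qquad {\sf M}(S)=\sum_l\ell(\sigma_l),\qquad {\sf M}(\partial S)=\sum_l{\sf M}(\partial[\![\sigma_l]\!]),
\]
valid for any $S\in\mathscr I_1(\X)$. I fix $\varepsilon_j\downarrow 0$ and let $\{\gamma^j_i\}_{i=1}^{n_j}$ be the $1$-Lipschitz curves from Lemma \ref{lem:almost_opt_repr_1-curr}, so that $S_j:=\sum_i[\![\gamma^j_i]\!]$ satisfies $\partial S_j=\partial S$, $\sum_i{\sf M}(\partial[\![\gamma^j_i]\!])={\sf M}(\partial S)$, ${\sf M}(S-S_j)\le\varepsilon_j{\sf M}(S)$ and $\sum_i\ell(\gamma^j_i)\le(1+\varepsilon_j){\sf M}(S)$. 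After a constant-speed reparametrisation on $[0,1]$ each curve has $\Lip(\gamma^j_i)=\ell(\gamma^j_i)\le 2{\sf M}(S)$, so I may invoke Lemma \ref{lem:AA} once tightness is checked: since ${\sf M}(S-S_j)\to 0$, subadditivity of the mass measure gives $|\,\|S\|(A)-\|S_j\|(A)\,|\le{\sf M}(S-S_j)$ uniformly in $A$, so $\|S_j\|\to\|S\|$ in total variation; hence the $\|S_j\|$, and then the dominating length measures $\mu_j:=\sum_i(\gamma^j_i)_\#(|\dot\gamma^j_i|\,\mathcal L^1)$ (whose total mass tends to ${\sf M}(S)$), are equi-tight, using that $\|S\|$ is tight by the Ulam assumption. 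Along a subsequence each $\gamma^j_i\to\sigma_i$ uniformly, and $[\![\gamma^j_i]\!]\rightharpoonup[\![\sigma_i]\!]$ by \eqref{eq:continuity_currentif}.

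The decisive point for controlling the tail is that the curves split into two kinds: the \emph{non-loops}, of which there are exactly ${\sf M}(\partial S)/2$ — a fixed finite number, since $\sum_i{\sf M}(\partial[\![\gamma^j_i]\!])={\sf M}(\partial S)$ and each summand is $0$ or $2$ — and the \emph{loops}, which are boundaryless. The finitely many non-loops, and the finitely many longest loops, are handled directly by Lemma \ref{lem:AA}. For the loops, ordered by decreasing length $b^j_1\ge b^j_2\ge\cdots$ with $\sum_k b^j_k\le 2{\sf M}(S)$, one has $b^j_{p+1}\le 2{\sf M}(S)/(p+1)$, and since each loop is boundaryless the isoperimetric Corollary \ref{cor:conseq_isoper} bounds the flat norm of the tail by $\sum_{k>p}{\sf F}([\![\delta^j_k]\!])\le D_1\sum_{k>p}(b^j_k)^2\le 2D_1{\sf M}(S)\,b^j_{p+1}$, which is small uniformly in $j$ for $p$ large. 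Combining this with the weak (equivalently, by Theorem \ref{thm:weak_conv_and_flat_norm}, flat) convergence of the finitely many leading pieces yields $S=\sum_l[\![\sigma_l]\!]$. The mass identity then follows by a squeeze: $\sum_l\ell(\sigma_l)\le\liminf_j\sum_i\ell(\gamma^j_i)\le{\sf M}(S)$ by lower semicontinuity of length and Fatou, while ${\sf M}(S)\le\sum_l{\sf M}([\![\sigma_l]\!])\le\sum_l\ell(\sigma_l)$ by \eqref{eq:mass_subadd}, forcing equalities throughout and ${\sf M}([\![\sigma_l]\!])=\ell(\sigma_l)$ for each $l$; the boundary identity follows analogously, using $\sum_i{\sf M}(\partial[\![\gamma^j_i]\!])={\sf M}(\partial S)$ for every $j$ together with subadditivity, Fatou and lower semicontinuity.

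Finally, with $S$ indecomposable and $S=\sum_l[\![\sigma_l]\!]$, ${\sf N}(S)=\sum_l{\sf N}([\![\sigma_l]\!])$ in hand, Proposition \ref{prop:alt_sum_N} shows that splitting the summands into any two non-zero integral groups would be ${\sf N}$-additive, contradicting indecomposability; hence exactly one $\sigma_l$ survives, giving $S=[\![\sigma]\!]$ with ${\sf M}([\![\sigma]\!])=\ell(\sigma)$. Lemma \ref{lem:indec_implies_simple} then forces $\sigma$ to be injective or an injective loop, which is the claim for indecomposable currents; reassembling over $m$ and transporting back to $\X$ completes the proof. I expect the genuine obstacle to be exactly the uniform tail control in the compactness step — ruling out that mass or length leaks into infinitely many ever-shorter pieces — which is precisely what the fixed count of non-loops together with the quadratic isoperimetric bound on loops is designed to overcome.
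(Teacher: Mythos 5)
Your proposal is correct and follows essentially the same route as the paper: embedding into a Banach space, the Ambrosio--Wenger \(\varepsilon\)-approximation, the Arzel\`{a}--Ascoli compactness step with equi-tightness of the length measures, the quadratic isoperimetric bound to kill the tail of short loops in flat norm, the squeeze argument for mass additivity, and Lemma \ref{lem:indec_implies_simple} for injectivity. The only organisational difference is that you apply Theorem \ref{thm:decomp} \emph{before} the curve representation (so that each indecomposable piece is represented by a single curve), whereas the paper runs the curve representation first and invokes Theorem \ref{thm:decomp} inside \textsc{Step 6}; this reordering is precisely the alternative the authors themselves sketch in the paragraph following Corollary \ref{cor:charact_indecom_one-curr}, so it does not constitute a genuinely different argument.
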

\begin{proof}
 By taking Remark \ref{rmk:embedd_ell_infty}
and Corollary \ref{cor:decomp_embedd} into account, we can assume without
loss of generality that \(\X\) is a Banach space. We subdivide the proof into several steps:\\
{\color{blue}\textsc{Step 1:} Monotone rearrangement and bounds on the length of curves.} Fix any
\((\varepsilon_j)_{j\in\N}\subseteq(0,1)\) such that \(\varepsilon_j\searrow 0\).
For any \(j\in\N\), by using Lemma \ref{lem:almost_opt_repr_1-curr} we can
find indices \(m_j,n_j\in\N\) with \(m_j\leq n_j\) and constant-speed Lipschitz curves
\(\gamma^j_i\colon[0,1]\to\X\), \(i=1,\ldots,n_j\), such that
\(\gamma^j_i(0)\neq\gamma^j_i(1)\) for every \(i=1,\ldots,m_j\),
\(\gamma^j_i(0)=\gamma^j_i(1)\) for every \(i=m_j+1,\ldots,n_j\),
\(\partial T=\sum_{i=1}^{m_j}\partial[\![\gamma^j_i]\!]\),
\({\sf M}(\partial T)=\sum_{i=1}^{m_j}{\sf M}\big(\partial[\![\gamma^j_i]\!]\big)\), and
\begin{equation}\label{eq:choice_gamma_ij}
{\sf M}\big(T-{\textstyle\sum_{i=1}^{n_j}}[\![\gamma^j_i]\!]\big)\leq
\varepsilon_j,\qquad{\textstyle\sum_{i=1}^{n_j}}\ell(\gamma^j_i)
\leq{\sf M}(T)+\varepsilon_j.
\end{equation}
Since \({\sf M}(\partial[\![\gamma^j_i]\!])=2\) for every \(i=1,\ldots,m_j\),
we deduce that \(m_j\leq{\sf M}(\partial T)/2\), thus (up to taking a not
relabelled subsequence in \(j\)) we may assume that \(m\coloneqq m_1=m_j\)
for every \(j\in\N\). Moreover, up to relabelling the curves
\((\gamma^j_i)_{i=m+1}^{n_j}\), we may assume that
\(\big(\ell(\gamma^j_i)\big)_{i=m+1}^{n_j}\) is non-increasing.
Fix any point \(\bar x\in{\rm spt}(T)\) and set \(\gamma^j_i(t)\coloneqq\bar x\)
for every \(j\in\N\), \(i>n_j\), and \(t\in[0,1]\). Given that
\eqref{eq:choice_gamma_ij} yields \(\ell(\gamma^j_i)\leq{\sf M}(T)+1\)
for every \(i,j\in\N\), up to taking a not relabelled subsequence in \(j\) we
may assume that there exists \((\lambda_i)_{i\in\N}\subseteq[0,{\sf M}(T)+1]\)
such that \(\lim_j\ell(\gamma^j_i)=\lambda_i\) for every \(i\in\N\). Observe
that \(\lambda_i>0\) for all \(i\leq m\) and  \((\lambda_i)_{i>m}\) is
non-increasing, thus there exists a unique \(i_0\in\N\cup\{\infty\}\)
with \(i_0>m\) such that \(\lambda_i>0\) for all \(i<i_0\) and \(\lambda_i=0\)
for all \(i\geq i_0\).\\
{\color{blue}\textsc{Step 2:} Compactness of curves with non-infinitesimal length.} Given \(i<i_0\), we aim to apply Lemma
\ref{lem:AA} to the sequence \((\gamma^j_i)_j\). For any \(\varepsilon>0\),
there exist \(j_0\in\N\) and a compact set \(K\subseteq\X\)
containing \(\bar x\) such that \(\varepsilon_j\leq\varepsilon\),
\(\ell(\gamma^j_i)\geq\lambda_i/2\) for all \(j\geq j_0\),
and \(\|T\|(\X\setminus K)\leq\varepsilon\).
Using the first equation in \eqref{eq:choice_gamma_ij} and restricting to the set $K$, we obtain 
\[
\M\big(T\llcorner K-\textstyle\sum_{i'=1}^\infty [\![\gamma_{i'}^j]\!]\llcorner K\big)\leq \varepsilon_j.
\]
From the choice of the set \(K\) we also have that \(\M(T\llcorner K)\geq \M(T)-\eps\). From this, and by triangle inequality, we obtain
\begin{equation}\label{eq:mass_restr_ineq}
\M(T)-\varepsilon-\varepsilon_j\leq \M(T\llcorner K)-\varepsilon_j\leq \M\big(\textstyle\sum_{i'=1}^\infty [\![\gamma_{i'}^j]\!]\llcorner K\big)\leq \sum_{i'=1}^\infty \M\big([\![\gamma_{i'}^j]\!]\llcorner K\big).
\end{equation}
Let us define, for every curve \(\gamma_{i'}^j\), the set of bad points \(B_{i'}^j:=\big\{t\in[0,1]:\gamma_{i'}^j(t)\not\in K\big\}\).
Then \([\![\gamma_{i'}^j]\!]\llcorner K=(\gamma_{i'}^j)_\# [\![(B_{i'}^j)^c,e_1,1]\!]\) and as a consequence 
\[
\M\big([\![\gamma_{i'}^j]\!]\llcorner K\big)\leq \Lip(\gamma_{i'}^j)\big(1-\mathcal L^1(B_{i'}^j)\big)=\ell(\gamma_{i'}^j)\big(1-\mathcal L^1(B_{i'}^j)\big).
\]
Putting this together with \eqref{eq:mass_restr_ineq} and the second inequality in \eqref{eq:choice_gamma_ij}, we obtain
\[
\M(T)-2\varepsilon\leq\sum_{i'=1}^\infty \M\big([\![\gamma_{i'}^j]\!]\llcorner K\big)\leq \sum_{i'=1}^\infty \ell(\gamma_{i'}^j)\big(1-\mathcal L^1(B_{i'}^j)\big)\leq \M(T)+\varepsilon-\sum_{i'=1}^\infty \ell(\gamma_{i'}^j)\mathcal L^1(B_{i'}^j)
\]
for every $j\geq j_0$, which implies in particular that for every $i<i_0$ and every $j\geq j_0$
\[
\frac{\lambda_i}{2}\mathcal L^1(B_i^j)\leq\ell(\gamma_i^j)\mathcal L^1(B_i^j)\leq \sum_{i'=1}^\infty\ell(\gamma_{i'}^j)\mathcal L^1(B_{i'}^j)\leq 3\varepsilon,
\]
whence accordingly
\[
\mathcal L^1\big(\big\{t\in[0,1]\,:\,\gamma^j_i(t)\notin K\big\}\big)
\leq\frac{6\varepsilon}{\lambda_i},\quad\text{ for every }j\geq j_0.
\]
Also, we have that \(\gamma^j_i\) is \(\ell(\gamma^j_i)\)-Lipschitz,
thus in particular \(({\sf M}(T)+1)\)-Lipschitz, for every \(j\in\N\).
Therefore, an application of Lemma \ref{lem:AA} yields the existence of a
family \((\gamma_i)_{i<i_0}\) of Lipschitz curves \(\gamma_i\colon[0,1]\to\X\)
with the property that, up to taking a further subsequence in \(j\), it holds
\(\gamma^j_i\rightrightarrows\gamma_i\) uniformly as \(j\to\infty\) for every
\(i<i_0\). Thanks to \eqref{eq:continuity_currentif}, we infer that
\begin{equation}\label{eq:diag_conv_currentif}
[\![\gamma^j_i]\!]\rightharpoonup[\![\gamma_i]\!]\;\;\;\text{as }j\to\infty,
\quad\text{ for every }i\in\N\text{ such that }i<i_0.
\end{equation}
{\color{blue}\textsc{Step 3:} Convergence of the non-infinitesimal part.}
In the case $i_0 \in \N$, it is an immediate consequence of \eqref{eq:diag_conv_currentif} that the finite sums $\sum_{i<i_0}[\![\gamma^j_i]\!]$ converge (weakly in the sense of currents) to $\sum_{i<i_0}[\![\gamma_i]\!]$. 
We now aim to prove that the same conclusion remains valid when $i_0 = +\infty$: in this case, note first that 
\begin{equation*}
(i-m)\cdot\ell(\gamma^j_i)\leq\sum_{i'=m+1}^i
\ell(\gamma^j_{i'})\leq{\sf M}(T)+1
\end{equation*} 
for every \(i,j\in\N\) with \(m<i\).
Furthermore, since the length functional \(\ell\) is lower semicontinuous with
respect to the uniform convergence, we see that \((i-m)\cdot\ell(\gamma_i)
\leq(i-m)\limi_j\ell(\gamma^j_i)\leq{\sf M}(T)+1\) for every
\(i\in\N\) with \(m<i\). Notice also that the flat norm \(\sf F\) is
continuous along weakly converging sequences in \(\mathscr I_1(\X)\)
having bounded \(\sf N\)-norm (by Theorem \ref{thm:weak_conv_and_flat_norm}).
Consequently, we may estimate
\[\begin{split}
{\sf F}\big({\textstyle\sum_{i<i'}}[\![\gamma^j_{i'}]\!]
-[\![\gamma_{i'}]\!]\big)
&\overset{\phantom{\eqref{eq:conseq_isoper}}}\leq
{\textstyle\sum_{i<i'}}{\sf F}\big([\![\gamma^j_{i'}]\!]
-[\![\gamma_{i'}]\!]\big)\leq{\textstyle\sum_{i<i'}}
{\sf F}\big([\![\gamma^j_{i'}]\!]\big)+{\textstyle\sum_{i<i'}}
{\sf F}\big([\![\gamma_{i'}]\!]\big)\\
&\overset{\eqref{eq:conseq_isoper}}\leq
D_1{\textstyle\sum_{i<i'}}{\sf N}\big([\![\gamma^j_{i'}]\!]\big)^2
+D_1{\textstyle\sum_{i<i'}}{\sf N}\big([\![\gamma_{i'}]\!]\big)^2\\
&\overset{\phantom{\eqref{eq:conseq_isoper}}}\leq
D_1{\textstyle\sum_{i<i'}}\ell(\gamma^j_{i'})^2
+D_1{\textstyle\sum_{i<i'}}\ell(\gamma_{i'})^2\\
&\overset{\phantom{\eqref{eq:conseq_isoper}}}\leq
 D_1\frac{{\sf M}(T)+1}{i-m}{\textstyle\sum_{i<i'}}
\left(\ell(\gamma^j_{i'})+\ell(\gamma_{i'})\right)\leq 2 D_1\frac{({\sf M}(T)+1)^2}{i-m},
\end{split}\]
for every \(i,j\in\N\) with \(m<i\). This guarantees that,
given any \(\varepsilon>0\), there exists \(i\in\N\) sufficiently large such that
\(m<i\) and \({\sf F}\big(\sum_{i<i'}[\![\gamma^j_{i'}]\!]
-[\![\gamma_{i'}]\!]\big)\leq\varepsilon\) for every \(j\in\N\).
As a consequence of \eqref{eq:diag_conv_currentif} and Theorem
\ref{thm:weak_conv_and_flat_norm}, there exists \(j_0\in\N\) such that
\({\sf F}\big([\![\gamma^j_{i'}]\!]-[\![\gamma_{i'}]\!]\big)\leq\varepsilon/i\)
for every \(j\geq j_0\) and \(i'\leq i\). Hence, we have
\({\sf F}\big(\sum_{i'<i_0}[\![\gamma^j_{i'}]\!]-[\![\gamma_{i'}]\!]\big)
\leq 2\varepsilon\) for every \(j\geq j_0\), thus using again
Theorem \ref{thm:weak_conv_and_flat_norm} we get
\begin{equation}\label{eq:weak_conv_curves}
\sum_{i}[\![\gamma^j_i]\!]\rightharpoonup\sum_{i}[\![\gamma_i]\!],
\quad\text{ as }j\to\infty.
\end{equation}
{\color{blue}\textsc{Step 4:} The infinitesimal part converges to zero.} Since \(\lambda_i=0\) for all
\(i\geq i_0\) and \(\big(\ell(\gamma^j_i)\big)_{i\geq i_0}\) is
non-increasing for all \(j\in\N\), for any \(\varepsilon>0\) there exists
\(j_0\in\N\) such that \(\ell(\gamma^j_i)\leq\varepsilon\) for every
\(i\geq i_0\) and \(j\geq j_0\). Hence, Corollary \ref{cor:conseq_isoper} gives
\[\begin{split}
{\sf F}\big({\textstyle\sum_{i\geq i_0}}[\![\gamma^j_i]\!]\big)&
\leq\sum_{i\geq i_0}{\sf F}\big([\![\gamma^j_i]\!]\big)\leq
D_1\sum_{i\geq i_0}{\sf N}\big([\![\gamma^j_i]\!]\big)^2=
D_1\sum_{i\geq i_0}\ell(\gamma^j_i)^2
\leq\varepsilon D_1\sum_{i\geq i_0}\ell(\gamma^j_i)\\
&\leq\varepsilon D_1({\sf M}(T)+1),
\end{split}\]
for every \(j\geq j_0\). This implies that
\(\lim_j{\sf F}\big(\sum_{i\geq i_0}[\![\gamma^j_i]\!]\big)=0\), thus
accordingly one has \(\sum_{i\geq i_0}[\![\gamma^j_i]\!]\rightharpoonup 0\)
as \(j\to\infty\) by Theorem \ref{thm:weak_conv_and_flat_norm}.\\
{\color{blue}\textsc{Step 5:} Conclusion (decomposition).} By recalling
\eqref{eq:weak_conv_curves}, we obtain that \(\sum_{i=1}^{n_j}
[\![\gamma^j_i]\!]=\sum_{i\in\N}[\![\gamma^j_i]\!]\rightharpoonup
\sum_{i<i_0}[\![\gamma_i]\!]\) as \(j\to\infty\). The first
identity in \eqref{eq:choice_gamma_ij} yields
\({\sf M}\big(T-\sum_{i<i_0}[\![\gamma_i]\!]\big)\leq
\limi_j{\sf M}\big(T-{\textstyle\sum_{i=1}^{n_j}}
[\![\gamma^j_i]\!]\big)=0\), so that \(T=\sum_{i<i_0}[\![\gamma_i]\!]\) and
in particular \({\sf M}(T)\leq\sum_{i<i_0}{\sf M}\big([\![\gamma_i]\!]\big)\).
On the other hand, by exploiting the inequality $\M([\![\gamma_i]\!]) \le \ell(\gamma_i)$ for every $i\ \in \N$, the second identity in
\eqref{eq:choice_gamma_ij}, the uniform convergence \(\gamma^j_i
\rightrightarrows\gamma_i\), and Fatou's lemma, we get
\[
\sum_{i<i_0}{\sf M}\big([\![\gamma_i]\!]\big)\le\sum_{i<i_0}\ell(\gamma_i)
\leq\sum_{i<i_0}\limi_{j\to\infty}\ell(\gamma^j_i)\leq
\limi_{j\to\infty}\sum_{i<i_0}\ell(\gamma^j_i)\leq{\sf M}(T).
\]
All in all, we have shown that
\begin{equation}\label{eq:non_canceling_in_decomposition}
{\sf M}(T)=\sum_{i<i_0}{\sf M}\big([\![\gamma_i]\!]\big) = \sum_{i<i_0} \ell(\gamma_i).
\end{equation}
Finally, up to passing to a further subsequence in \(j\), we can additionally assume
that \(x_i\coloneqq\gamma^1_i(0)=\gamma^j_i(0)\) and
\(y_i\coloneqq\gamma^1_i(1)=\gamma^j_i(1)\) for every \(j\in\N\) and
\(i=1,\ldots,m\), which implies that \(\gamma_i(0)=x_i\) and \(\gamma_i(1)=y_i\)
for every \(i=1,\ldots,m\). In particular, one has
\({\sf M}(T)=\sum_{i=1}^m{\sf M}\big(\partial[\![\gamma_i]\!]\big)=
\sum_{i<i_0}{\sf M}\big(\partial[\![\gamma_i]\!]\big)\) and thus
\({\sf N}(T)=\sum_{i<i_0}{\sf N}\big([\![\gamma_i]\!]\big)\).\\
{\color{blue}\textsc{Step 6:} Conclusion (injectivity).} It remains to show that the Lipschitz curves $(\gamma_i)_{i}$ can be taken to be injective or injective loops. Observe that it is not restrictive at this point to assume that $[\![\gamma_i]\!]$ is indecomposable for every $i \in I$: indeed, applying Theorem \ref{thm:decomp} to each current $[\![\gamma_i]\!]$ (given by \textsc{Steps 1-5} above) we find a family of indecomposable currents $(S_i^j)_{j\in J} \subseteq \mathscr I_1(\X)$ such that  
\[
[\![\gamma_i]\!] =\sum_{j\in J}S_i^j,\qquad{\sf N}([\![\gamma_i]\!])=\sum_{j\in J}{\sf N}(S_i^j).
\]
Applying now \textsc{Steps 1-5} above to the family $(S_i^j)_{j\in J}$ and exploiting their indecomposability, we conclude that we can assume $[\![\gamma_i]\!]$ indecomposable for every $i$. On the other hand, from \eqref{eq:non_canceling_in_decomposition} it follows that for every $i \in \N$ the curve $\gamma_i$ satisfies the non-cancelling property $\M([\![\gamma_i]\!])=\ell(\gamma_i)$.
A straightforward application of Lemma \ref{lem:indec_implies_simple} yields the desired conclusion.
\end{proof}
\begin{lemma}[Injective implies indecomposable]\label{lem:simple_implies_indec}
Let \((\X,\sfd)\) be a complete metric space and let $\gamma:[0,1]\to \X$
be a Lipschitz curve, either injective or an injective loop.
Then $\M([\![\gamma]\!])=\ell(\gamma)$ and $[\![\gamma]\!]$ is indecomposable.
\end{lemma}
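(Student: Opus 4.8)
The plan is to establish the two assertions separately, reducing indecomposability to a constancy statement for integral $1$-cycles supported on a simple arc or a simple closed curve. Throughout I may assume, as in the proof of Lemma~\ref{lem:indec_implies_simple}, that $\gamma$ is parametrised by constant speed, since currentification is invariant under monotone Lipschitz reparametrisations. For the mass identity, the injective case is precisely the fact recalled in Section~\ref{sec:reminder}, namely $\|[\![\gamma]\!]\|=\H^1|_{\gamma([0,1])}$ and $\M([\![\gamma]\!])=\ell(\gamma)$. For an injective loop I would split $\gamma$ at an interior point, writing $[\![\gamma]\!]=[\![\gamma|_{[0,1/2]}]\!]+[\![\gamma|_{[1/2,1]}]\!]$ with both restrictions injective; subadditivity of the mass measure gives $\M([\![\gamma]\!])\le\ell(\gamma)$, while testing the restrictions $[\![\gamma]\!]\llcorner B$ on small balls $B$ meeting only the relative interior of a single arc shows $\|[\![\gamma]\!]\|\ge\H^1|_{\gamma([0,1])\setminus\{\gamma(0)\}}=\H^1|_{\gamma([0,1])}$, the overlap of the two arcs being the $\H^1$-null set $\{\gamma(0),\gamma(1/2)\}$. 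Hence $\M([\![\gamma]\!])=\H^1(\gamma([0,1]))=\ell(\gamma)$ in both cases, and the density of $[\![\gamma]\!]$ equals $1$ a.e.

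For indecomposability I argue by contradiction: suppose $[\![\gamma]\!]=R+S$ with $R,S\in\mathscr I_1(\X)\setminus\{0\}$ and ${\sf N}([\![\gamma]\!])={\sf N}(R)+{\sf N}(S)$. By Proposition~\ref{prop:alt_sum_N} this forces $\|[\![\gamma]\!]\|=\|R\|+\|S\|$ and $\|\partial[\![\gamma]\!]\|=\|\partial R\|+\|\partial S\|$. Writing $\Gamma\coloneqq\gamma([0,1])$, the measure $\|[\![\gamma]\!]\|=\H^1|_\Gamma$ has multiplicity $1$, so $R$ and $S$ are integer-rectifiable currents supported in $\Gamma$ whose integer densities $\theta_R,\theta_S$ (with respect to the fixed orientation $\dot\gamma/|\dot\gamma|$) satisfy, $\H^1$-a.e.\ on $\Gamma$, both $\theta_R+\theta_S=1$ and $|\theta_R|+|\theta_S|=1$. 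The only integer solutions are $(\theta_R,\theta_S)\in\{(1,0),(0,1)\}$, so, pulling back through the injective map $\gamma$, there is a Borel partition $[0,1]=A\sqcup B$ with $R=[\![\gamma]\!]\llcorner\gamma(A)$ and $S=[\![\gamma]\!]\llcorner\gamma(B)$; equivalently $R(f,\pi)=\int_0^1\1_A\,f(\gamma)\,(\pi\circ\gamma)'\,\d t$ and similarly for $S$.

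Next I would analyse the boundaries. Testing $\partial R(g)=R(1,g)=\int_0^1\1_A\,(g\circ\gamma)'\,\d t$ with $g\equiv 1$ gives $\partial R(1)=0$, so the integral $0$-current $\partial R$ has integer coefficients summing to zero, whence $\M(\partial R)\in\{0,2,4,\dots\}$; the same holds for $\partial S$. Since $\M(\partial[\![\gamma]\!])=2$ for an arc and $\M(\partial[\![\gamma]\!])=0$ for a loop, the no-cancellation identity $\M(\partial R)+\M(\partial S)=\M(\partial[\![\gamma]\!])$ forces at least one summand (both, in the loop case) to be a cycle. Thus some nonzero component, say $S=[\![\gamma]\!]\llcorner\gamma(B)$, satisfies $\partial S=0$.

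The heart of the matter, and the step I expect to be the main obstacle, is the constancy statement: an integral $1$-cycle of the form $[\![\gamma]\!]\llcorner\gamma(B)$ supported in the simple arc or curve $\Gamma$ must be trivial, i.e.\ $\1_B$ is a.e.\ constant. The condition $\partial S=0$ reads $\int_0^1\1_B\,(g\circ\gamma)'\,\d t=0$ for every $g\in\LIP(\X)$, and the difficulty is that $\{g\circ\gamma:g\in\LIP(\X)\}$ need not exhaust $\LIP([0,1])$, since $\gamma^{-1}$ may fail to be Lipschitz when the curve nearly self-approaches; uniform approximation of test functions does not control their derivatives, which is what the boundary sees. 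I would resolve this by first showing that $\1_B\in\BV(0,1)$, so that the distributional derivative $D\1_B$ is a finite measure on $[0,1]$, and then using that the pushforward of measures under the injective map $\gamma$ is injective: from $\gamma_\#(D\1_B)=-\partial S=0$ one deduces $D\1_B=0$, whence $\1_B$ is constant on $[0,1]$ (on the circle $\R/\Z$ in the loop case). An alternative route for this step is to re-metrise $\Gamma$ by its arc-length metric, in which $\gamma$ becomes bi-Lipschitz and the classical one-dimensional constancy theorem applies directly. Constancy forces $B=\emptyset$ or $B=[0,1]$: the former yields $S=0$, while the latter gives $S=[\![\gamma]\!]$, which is incompatible with $\partial S=0$ in the arc case and gives $R=0$ in the loop case. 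In every case this contradicts that both $R$ and $S$ are nonzero, proving that $[\![\gamma]\!]$ is indecomposable.
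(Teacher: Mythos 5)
Your first part (the mass identity) is fine, and your overall strategy for indecomposability is genuinely different from the paper's: the paper first proves the optimal representation Theorem \ref{thm:opt_repr_1-curr}, uses it to replace an arbitrary decomposition $[\![\gamma]\!]=R+S$ by a decomposition into injective curves and loops, and then concludes by a purely topological argument ($\gamma$ is a homeomorphism onto its image, so $\gamma^{-1}\circ\gamma_i$ is a continuous injection of $[0,1]$ or $\mathbb S^1$ into itself, hence surjective). You instead reduce, via the integer-density representation of rectifiable currents and Proposition \ref{prop:alt_sum_N}, to the statement that an integral $1$-cycle of the form $[\![\gamma]\!]\llcorner\gamma(B)$ on a simple arc or loop is trivial. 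That reduction is reasonable (modulo citing the representation theory for rectifiable metric currents to justify that the densities exist, are integers relative to the orientation of $\gamma$, and add), and you have correctly located the crux of the matter.

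The problem is that the crux is left unproven, and neither of your two proposed resolutions closes it. The hypothesis $\partial S=0$ only gives $\int_0^1\1_B\,(g\circ\gamma)'\,\d t=0$ for $g\in\LIP(\X)$, i.e.\ it tests the distributional derivative $D\1_B$ only against the subalgebra $\{g\circ\gamma:g\in\LIP(\X)\}\subseteq\LIP([0,1])$. To conclude $D\1_B=0$ via pushforward you first assert $\1_B\in\BV(0,1)$, but this is exactly what cannot be extracted from the available information: finiteness of $|D\1_B|([0,1])$ is equivalent to the \emph{extended} functional $h\mapsto\int_0^1\1_B h'\,\d t$, $h\in\LIP([0,1])$, having finite mass, and nothing in the hypotheses bounds it on test functions $h$ that are Lipschitz in arc-length but not of the form $g\circ\gamma$ (such $h$ are unavoidable precisely when $\gamma^{-1}$ fails to be Lipschitz). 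The alternative route fails for the same reason: the identity map $(\gamma([0,1]),\sfd)\to(\gamma([0,1]),\sfd_{al})$ is not Lipschitz, so $S$ cannot be pushed forward to the arc-length re-metrisation, and extending $S$ to the larger test class $\LIP(\gamma([0,1]),\sfd_{al})$ with vanishing boundary is again the unproven claim. Note that the constancy statement you need is essentially Proposition \ref{prop:ABC} (the metric version of the Alberti--Bianchini--Crippa lemma), which in this paper is \emph{deduced from} Theorem \ref{thm:opt_repr_1-curr} and the present lemma; so as it stands you have reduced the lemma to a statement at least as strong, and some genuinely new input (e.g.\ the Ambrosio--Wenger approximation underlying Theorem \ref{thm:opt_repr_1-curr}) is still required to finish.
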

\begin{proof}
The non-cancelling property $\M([\![\gamma]\!])=\ell(\gamma)$ follows immediately from the injectivity of $\gamma$ on $(0,1)$, so it suffices to prove indecomposability. 

\begin{itemize}
	\item[\( (\rm i)\)] Suppose first that $\gamma$ is an injective Lipschitz curve: as a consequence of Theorem \ref{thm:opt_repr_1-curr}, to prove that
\([\![\gamma]\!]\) is indecomposable amounts to showing that if
\([\![\gamma]\!]=\sum_{i\in I}[\![\gamma_i]\!]\) and
\({\sf N}([\![\gamma]\!])=\sum_{i\in I}{\sf N}([\![\gamma_i]\!])\)
with \(\gamma_i\) simple, Lipschitz curves, then \(I\) must be a singleton.
To this aim, notice that \({\sf M}(\partial [\![\gamma]\!])=2\), thus there
exists a unique \(i\in I\) such that the curve \(\gamma_i\) is not a loop.
We have \(\partial[\![\gamma]\!]=\partial[\![\gamma_i]\!]\), so that
\(\gamma_i(0)=\gamma(0)\) and \(\gamma_i(1)=\gamma(1)\). Since
\(\|[\![\gamma_i]\!]\|\leq\|[\![\gamma]\!]\|\) by Proposition \ref{prop:alt_sum_N}, we deduce that \(\gamma_i\big([0,1]\big)\subseteq
\gamma\big([0,1]\big)\): were this false, we would find by continuity an open interval
$U \subseteq  [0,1]\setminus(\gamma_i^{-1}\circ 
\gamma)\big([0,1]\big)$. In turn, this would imply $0<\|[\![\gamma_i]\!]\|(\gamma_i(U))\leq\|[\![\gamma]\!] \|(\gamma_i(U))=0$, which is a contradiction: hence \(\gamma_i\big([0,1]\big)\subseteq
\gamma\big([0,1]\big)\). We now claim that also the reverse inclusion holds true, i.e.\ \(\gamma_i\big([0,1]\big)\supseteq
\gamma\big([0,1]\big)\). To prove this, we first note that $\gamma$ is a continuous map from a compact to a Hausdorff space, hence closed, therefore a homeomorphism between $[0,1]$ and $\gamma([0,1])$. This implies that the map $\gamma^{-1} \circ \gamma_i \colon [0,1] \to [0,1]$ is continuous; since $\gamma^{-1}(\gamma_i(0))=0$ and $\gamma^{-1}(\gamma_i(1))=1$, we conclude that $\gamma^{-1} \circ \gamma_i$ is surjective on $[0,1]$ and this yields the sought inclusion. We have thus shown that $\gamma$ and $\gamma_i$ are injective Lipschitz curves, with the same initial and final points and \(\gamma_i\big([0,1]\big)=
\gamma\big([0,1]\big)\). We can conclude that $\ell(\gamma) = \ell(\gamma_i)$ and thus, by the non-cancelling property, $\M([\![\gamma]\!])=\M([\![\gamma_i]\!])$: this implies that \(I=\{i\}\), as desired, and this concludes the proof in this case.
\item[\((\rm ii)\)] If $\gamma$ is an injective loop, we argue in a similar way: again, as a consequence of Theorem \ref{thm:opt_repr_1-curr}, to prove that
\([\![\gamma]\!]\) is indecomposable amounts to showing that if
\([\![\gamma]\!]=\sum_{i\in I}[\![\gamma_i]\!]\) and
\({\sf N}([\![\gamma]\!])=\sum_{i\in I}{\sf N}([\![\gamma_i]\!])\)
with \(\gamma_i\) simple, Lipschitz curves, then \(I\) must be a singleton.
Notice that \({\sf M}(\partial [\![\gamma]\!])=0\), thus for every \(i\in I\) the curve \(\gamma_i\) is  a loop. Fix any $i \in I$: since
\(\|[\![\gamma_i]\!]\|\leq\|[\![\gamma]\!]\|\) by Proposition \ref{prop:alt_sum_N}, we deduce that \(\gamma_i\big([0,1]\big)\subseteq
\gamma\big([0,1]\big)\). As above, a short topological argument yields also the reverse inclusion, i.e.\ \(\gamma_i\big([0,1]\big)\supseteq
\gamma\big([0,1]\big)\). Being a loop, $\gamma$ can be identified with a continuous map $\gamma \colon \mathbb S^1 \to\X$: as above, $\gamma$ is then a closed map, hence a homeomorphism between $\mathbb S^1$ and $\gamma([0,1])$. This implies that the map $\gamma^{-1} \circ \gamma_i \colon \mathbb S^1 \to \mathbb S^1$ is continuous and injective, thus (again by standard facts in topology) it must be surjective as well. We have thus shown that $\gamma$ and $\gamma_i$ are injective loops, with \(\gamma_i\big([0,1]\big)=
\gamma\big([0,1]\big)\). We can thus deduce that $\ell(\gamma) = \ell(\gamma_i)$ and the conclusion follows as in the previous case, as a consequence of the non-cancelling property.\qedhere 
\end{itemize}
\end{proof}
\begin{remark}{\rm
The following equivalence is satisfied: if \((\X,\sf d)\) is a complete metric
space and \(\gamma:[0,1]\to \X\) is a Lipschitz curve, then 
\[
\gamma\text{ is injective or an injective loop } \quad\iff\quad \M([\![\gamma]\!])=\ell(\gamma)\ \text{ and }\ [\![\gamma]\!]\text{ is indecomposable}.
\]
This can be achieved by combining Lemma \ref{lem:indec_implies_simple}
with Lemma \ref{lem:simple_implies_indec}.
\fr}\end{remark}
As an immediate consequence of Theorem \ref{thm:opt_repr_1-curr}, Lemma \ref{lem:indec_implies_simple}, and Lemma \ref{lem:simple_implies_indec},
we deduce the following characterisation of indecomposable $1$-currents:  

\begin{corollary}[Characterisation of indecomposable integral \(1\)-currents]
\label{cor:charact_indecom_one-curr}
Let \((\X,\sfd)\) be a complete metric space. Then the indecomposable integral
\(1\)-currents \(T\in\mathscr I_1(\X)\) are exactly those of the form
\(T=[\![\gamma]\!]\), where the Lipschitz curve \(\gamma\colon[0,1]\to\X\)
is either injective or an injective loop.
\end{corollary}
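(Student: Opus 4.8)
The plan is to prove the two implications separately, relying on the three results quoted just above the statement. The sufficiency direction---that \([\![\gamma]\!]\) is indecomposable whenever \(\gamma\) is injective or an injective loop---is precisely the content of Lemma \ref{lem:simple_implies_indec}, so no further work is needed there.

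For necessity I would start from an arbitrary indecomposable \(T\in\mathscr I_1(\X)\) and apply the optimal representation Theorem \ref{thm:opt_repr_1-curr} to write \(T=\sum_{i\in I}[\![\gamma_i]\!]\) with \({\sf N}(T)=\sum_{i\in I}{\sf N}([\![\gamma_i]\!])\), where each \(\gamma_i\) is injective or an injective loop and \(I\) is at most countable. The goal is to show that \(I\) reduces to a single index, so that \(T=[\![\gamma]\!]\) with \(\gamma\) of the required form. Arguing by contradiction, suppose at least two of the currents \([\![\gamma_i]\!]\) are non-zero; fix one such index \(i_0\) and set \(R\coloneqq[\![\gamma_{i_0}]\!]\) and \(S\coloneqq\sum_{i\in I\setminus\{i_0\}}[\![\gamma_i]\!]\), both of which are then non-zero integral \(1\)-currents with \(T=R+S\).

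The crux is to verify that this splitting is additive for the normal mass, i.e.\ \({\sf N}(T)={\sf N}(R)+{\sf N}(S)\), which contradicts indecomposability. To this end I would invoke Proposition \ref{prop:alt_sum_N}, upgrading the identity \({\sf N}(T)=\sum_i{\sf N}([\![\gamma_i]\!])\) to the measure equalities \(\|T\|=\sum_i\|[\![\gamma_i]\!]\|\) and \(\|\partial T\|=\sum_i\|\partial[\![\gamma_i]\!]\|\). Grouping the tail, the subadditivity \eqref{eq:mass_subadd} combined with these equalities gives \({\sf N}(R)+{\sf N}(S)\le\sum_{i\in I}{\sf N}([\![\gamma_i]\!])={\sf N}(T)\), while the reverse inequality \({\sf N}(T)\le{\sf N}(R)+{\sf N}(S)\) follows at once from subadditivity of \({\sf N}\) applied to \(T=R+S\). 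Hence \({\sf N}(T)={\sf N}(R)+{\sf N}(S)\) and \(T\) is decomposable, the desired contradiction; therefore \(I\) is a singleton and \(T=[\![\gamma]\!]\).

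I expect the only genuinely delicate point to be the bookkeeping around degenerate or null components: one must make sure the curves supplied by Theorem \ref{thm:opt_repr_1-curr} are non-constant, so that each \([\![\gamma_i]\!]\neq 0\), and that both \(R\) and \(S\) are indeed non-zero before invoking the definition of decomposability. The measure equality \(\|T\|=\sum_i\|[\![\gamma_i]\!]\|\) is again useful here, since it shows that a vanishing tail would force all but one of the components to be trivial. Once this is settled, the stated equivalence follows by combining the two directions, with Lemma \ref{lem:indec_implies_simple} available to certify, if needed, that the surviving curve is injective or an injective loop.
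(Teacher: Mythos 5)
Your proposal is correct and follows exactly the route the paper intends: the paper states the corollary as an immediate consequence of Theorem \ref{thm:opt_repr_1-curr} and Lemmas \ref{lem:indec_implies_simple}--\ref{lem:simple_implies_indec} without writing out the details, and your argument (sufficiency from Lemma \ref{lem:simple_implies_indec}; necessity by splitting off one non-trivial component of the optimal representation and checking additivity of \({\sf N}\) via Proposition \ref{prop:alt_sum_N} and subadditivity) is precisely the natural filling-in of that derivation. The bookkeeping points you flag (non-triviality of \(R\) and \(S\)) are handled correctly by the measure identity \(\|T\|=\sum_i\|[\![\gamma_i]\!]\|\).
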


In particular, in the Euclidean setting, we obtain the characterisation of indecomposable $1$-currents hinted in Federer's book, injectivity included.

The careful reader might have noticed that the result we build upon to prove Theorem \ref{thm:opt_repr_1-curr} and Corollary \ref{cor:charact_indecom_one-curr}, namely \cite[Lemma 4.4]{AW11}, already makes use of Federer's claim. This opens up the possibility of a circular reasoning. However this is not the case, and we present two ways out. First we observe that Ambrosio and Wenger do not make use of the full result, namely they do not need the injectivity of the curves. Therefore one can first prove the decomposition in Euclidean spaces without the injectivity claim (as done in \cite{CGM} and \cite{marchese}) and use that result to prove Ambrosio--Wenger, and then our result. As a second way out, we can also prove the result without injectivity, prove the decomposition given by Theorem \ref{thm:decomp} (which is unrelated to Federer's injectivity claim) and then put together the two things similarly to what we did in \textsc{Step 6} of the proof of Theorem \ref{thm:opt_repr_1-curr} to obtain  injectivity.

\section{Applications}\label{sec:applications}

In this final section, we present some consequences of the decomposition and of the characterisation of indecomposable 1-currents. 

\subsection{Integral currents with support contained in a given curve}

We commence with the following generalisation of \cite[Lemma 2.14]{ABC2}. 

\begin{proposition}\label{prop:ABC} Let $(\X,\sf d)$ be a complete metric space and let $\gamma \colon [0,a] \to\X$ be a Lipschitz curve, parametrised by arc-length. Suppose that $\gamma$ is either injective or an injective loop.
Consider a $1$-dimensional integral $T \in \mathscr I_1(\X)\setminus\{0\}$, with ${\rm spt}(T) \subseteq \gamma([0,a])$. 
	\begin{itemize}
	\item[\( (\rm i)\)]If $\partial T=0$, then $\gamma$ is an injective Lipschitz loop and $T=k [\![\gamma]\!]$, for some $k \in \mathbb Z$.
	\item[\( (\rm ii)\)] If $\gamma$ is injective and $\partial T = \partial [\![\gamma]\!] = \delta_{\gamma(a)} - \delta_{\gamma(0)}$, then $T=[\![\gamma]\!]$. 
\end{itemize}
\end{proposition}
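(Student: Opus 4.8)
The plan is to reduce everything to the optimal representation of $T$ furnished by Theorem \ref{thm:opt_repr_1-curr} and then to read off the conclusion from the topology of $\Gamma\coloneqq\gamma([0,a])$. First I would write $T=\sum_{i\in I}[\![\gamma_i]\!]$ with each $\gamma_i$ an injective Lipschitz curve or an injective loop and ${\sf N}(T)=\sum_i{\sf N}([\![\gamma_i]\!])$; by Proposition \ref{prop:alt_sum_N} this upgrades to the two \emph{separate} mass identities $\|T\|=\sum_i\|[\![\gamma_i]\!]\|$ and $\|\partial T\|=\sum_i\|\partial[\![\gamma_i]\!]\|$. The first one forces $\|[\![\gamma_i]\!]\|\le\|T\|$, whence ${\rm spt}([\![\gamma_i]\!])=\gamma_i([0,1])\subseteq{\rm spt}(T)\subseteq\Gamma$ for every $i$. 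Thus every component is an injective curve or injective loop whose image lies in the compact set $\Gamma$, and the whole analysis becomes topological, governed by whether $\Gamma$ is an arc (when $\gamma$ is injective) or a circle (when $\gamma$ is an injective loop). Two elementary facts drive the argument: an arc contains no subset homeomorphic to $\mathbb S^1$, and the only subset of $\mathbb S^1$ homeomorphic to $\mathbb S^1$ is $\mathbb S^1$ itself, connected proper subsets of a circle being arcs or points.

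For (i), if $\partial T=0$ then $\sum_i\|\partial[\![\gamma_i]\!]\|=0$, so every $\gamma_i$ is a loop. Were $\gamma$ injective, $\Gamma$ would be an arc containing the image of some loop $\gamma_i$ (there is at least one, as $T\neq 0$), contradicting the first fact; hence $\gamma$ is an injective loop and $\Gamma\cong\mathbb S^1$. By the second fact $\gamma_i([0,1])=\Gamma$ for each $i$, so $\gamma_i$ and $\gamma$ are injective loops with the same image and therefore $[\![\gamma_i]\!]=\pm[\![\gamma]\!]$ by reparametrisation invariance of the currentification. Since each such component has mass $\H^1(\Gamma)=a>0$ and $\sum_i\|[\![\gamma_i]\!]\|=\|T\|<\infty$, the index set $I$ is finite, and summing the signs gives $T=k[\![\gamma]\!]$ with $k\in\mathbb Z$.

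For (ii), $\gamma$ is injective, so $\Gamma$ is an arc and, by the first fact, none of the $\gamma_i$ can be a loop; thus every $\|\partial[\![\gamma_i]\!]\|$ has total mass $2$. Summing yields $2=\|\partial T\|(\X)=\sum_i\|\partial[\![\gamma_i]\!]\|(\X)=2\,\#I$, so $I$ is a singleton and $T=[\![\gamma_1]\!]$ with $\gamma_1$ injective, $\gamma_1([0,1])\subseteq\Gamma$, and $\partial[\![\gamma_1]\!]=\partial T$, i.e.\ $\gamma_1(0)=\gamma(0)$ and $\gamma_1(1)=\gamma(a)$. Since $\gamma\colon[0,a]\to\Gamma$ is a homeomorphism (continuous injection from a compact space into a Hausdorff one), the map $\gamma^{-1}\circ\gamma_1\colon[0,1]\to[0,a]$ is continuous and sends $0\mapsto 0$, $1\mapsto a$, hence is onto by the intermediate value theorem; therefore $\gamma_1([0,1])=\Gamma$. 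So $\gamma_1$ and $\gamma$ are injective Lipschitz parametrisations of $\Gamma$ with the same endpoints, and reparametrisation invariance gives $T=[\![\gamma_1]\!]=[\![\gamma]\!]$.

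The main obstacle I anticipate is the passage from ``same oriented image'' to ``equality of the associated currents''. This rests on the reparametrisation invariance of $\gamma\mapsto[\![\gamma]\!]$, which I would justify through the monotone homeomorphism $\psi=\gamma^{-1}\circ\gamma_1$ relating the two parametrisations (its circle analogue in case (i)), together with the structural fact recalled in Section \ref{sec:reminder} that $\|[\![\gamma]\!]\|=\H^1|_{\gamma([0,a])}$ for injective curves: concretely, at $\H^1$-a.e.\ point of $\Gamma$ the induced unit tangents of $\gamma$ and $\gamma_1$ point in the same direction precisely because $\psi$ is increasing, so the two currents carry the same rectifiable set with the same multiplicity $1$ and the same orientation. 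A secondary point requiring care, though entirely routine given the earlier results, is the legitimacy of splitting ${\sf N}(T)=\sum_i{\sf N}([\![\gamma_i]\!])$ into the separate mass and boundary-mass series, which is exactly Proposition \ref{prop:alt_sum_N} and is what makes the counting of components by boundary mass rigorous.
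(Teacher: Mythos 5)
Your proof is correct and follows essentially the same route as the paper: decompose $T$ via Theorem \ref{thm:opt_repr_1-curr}, use Proposition \ref{prop:alt_sum_N} and the support constraint to trap each component's image in $\gamma([0,a])$, rule out the wrong topological type (loop in an arc, proper subcircle of a circle), and conclude by reparametrisation invariance together with the mass/boundary-mass counting. The only cosmetic difference is in the final identification $[\![\gamma_i]\!]=\pm[\![\gamma]\!]$, which the paper handles by passing to arc-length parametrisations and isometries of $\mathbb S^1$ while you argue directly via the orientation of the common rectifiable set; both are adequate.
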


\begin{proof} Let us split the proof in some steps. 

	{\color{blue}\textsc{Step 1:} Decomposition.} Consider the current $T \ne 0$ and assume $\partial T=0$.  Applying Theorem \ref{thm:opt_repr_1-curr}, we can write 
	\begin{equation}\label{eq:dec}
	T = \sum_{i\in I}[\![\sigma_i]\!], \qquad \text{ with } \,
	\, \M(T) = \sum_{i\in I}\ell(\sigma_i) = \sum_{i\in I}\M( [\![\sigma_i]\!])
	\end{equation}
	for at most countably many injective Lipschitz loops $\sigma_i \colon \mathbb S^1 \to\X$, \(i\in I\), which we assume to be parametrised with constant speed.

	{\color{blue}\textsc{Step 2:} $\gamma$ is a loop.}
	Fix any $i \in I$ such that $\sigma_i$ is non-trivial: since ${\rm spt}(T) \subseteq C=\gamma([0,a])$ we have $\sigma_i(\mathbb S^1) \subseteq \gamma([0,a])$ (because $\Vert [\![\sigma_i]\!] \Vert \le \Vert[\![\gamma]\!] \Vert$ as measures by Proposition \ref{prop:alt_sum_N}) and this forces $\gamma$ to be a loop as well: if not, we would find a homeomorphism between $ \mathbb S^1\simeq \sigma_i(\mathbb S^1) $ (recall $\sigma_i$ is injective, hence a homeomorphism onto its image) and a closed subinterval of $[0,a]$, a contradiction.
	
	{\color{blue}\textsc{Step 3:} Homeomorphism.} 
	Since $\gamma$ is injective, it is a homeomorphism between $[0,a]^* \simeq  \mathbb S^1$ and $C$. The map $\gamma^{-1} \circ \sigma_i \colon \mathbb S^1 \to \mathbb S^1$ is thus a continuous,  injective map and hence it is also surjective. We infer that $C = \sigma_i(\mathbb S^1)$.  
	
	{\color{blue}\textsc{Step 4:} Isometries.} Let us now equip $C=\gamma([0,a])=\sigma_i(\mathbb S^1)$ with the arc-length distance  $\sfd_{al}$ (notice this depends only on the support $C$, not on the parametrisation). The map $\gamma$ is an isometric homeomorphism between $(\mathbb S^1,a\sfd_\theta/2\pi)$ and $(C, \sfd_{al})$, where $\sfd_\theta$ denotes the distance on $\mathbb S^1$. Similarly, since $\sigma_i$ is injective, it is also an isometric homeomorphism between $(\mathbb S^1,a\sfd_\theta/2\pi)$ and $(C,\sfd_{al})$. The map $\gamma^{-1} \circ \sigma_i \colon(\mathbb S^1,a\sfd_\theta/2\pi)\to(\mathbb S^1,a\sfd_\theta/2\pi)$ is thus an isometry and hence
	$[\![\gamma]\!]= [\![\gamma \circ ( \gamma^{-1} \circ \sigma_i ) ]\!]= [\![\sigma_i]\!]$. 
		
	{\color{blue}\textsc{Step 5:} Conclusion in the boundaryless case.} From \eqref{eq:dec}, we deduce that
	\begin{equation*}
	\M(T) = \sum_{i\in I}\M([\![\sigma_i]\!]) = \# I\cdot\M([\![\gamma]\!]),
	\end{equation*}
	hence the set of indices \(I\) is finite and this concludes the proof in the case $\partial T=0$.
	
	{\color{blue}\textsc{Step 6:} The case with boundary.} The proof of Point \( (\rm ii)\) can be done exploiting a similar strategy. First, one decomposes $T$ as  
	\begin{equation}\label{eq:additivity_of_mass_step6} 
	T = \sum_{i=1}^{k}  [\![\beta_i]\!] + \sum_{i \in I}  [\![\sigma_i]\!] , \qquad \text{ with } \,\, \M(T) = \sum_{i=1}^{k} \M( [\![\beta_i]\!]) + \sum_{i \in I} \M( [\![\sigma_i]\!]),
	\end{equation}
	where $\beta_i \colon [0,1] \to\X$ are finitely many injective Lipschitz curves, parametrised with constant speed, while $\sigma_i$ are at most countably many injective Lipschitz loops. It is readily seen that ${\rm spt}(T) \subseteq \gamma([0,a])$ implies that 
	the loops $\sigma_i$ are all trivial (proceed as in \textsc{Step 2}: the map $\gamma^{-1} \circ \sigma_i$ would be a homeomorphism between $\mathbb S^1$ and a closed subinterval of $[0,a]$); as for the curves $\beta_i$, from $\partial T = \partial [\![\gamma]\!]$ we infer that there exists one and only one $i_0\in \{1, \ldots, k\}$ such that $\beta_{i_0}$ is non-trivial. Such $\beta_{i_0}$ is thus an injective, constant-speed Lipschitz curve whose image is contained in $\gamma([0,a])$ and $\beta_{i_0}(1)=\gamma(a)$ and $\beta_{i_0}(0)=\gamma(0)$, hence (arguing similarly as above) we conclude $\beta_{i_0}(t)= \gamma(at)$ for all \(t\in[0,1]\), in particular $[\![\beta_{i_0}]\!]=[\![\gamma]\!]$ and the proof is completed also in this case.
\end{proof}

\subsection{On the structure of simple sets in \texorpdfstring{$\R^2$}{R2}}

In this paragraph, we show how it is possible to derive from the decomposition some consequences on the structure of particular sets of finite perimeter in $\R^2$. The material presented here should be compared with \cite{ACMM}, to which we refer the reader for the notation and for a more detailed analysis. 
\medskip

For any measurable set $A \subseteq \R^d$ we denote by $T_A$ the canonical current associated with $A$ (with unit density and orientation induced by $\R^d$); 
observe that if $\mathcal{L}^d(A)<+\infty$ then $T_A \in \mathscr R_d(\R^d)$ and if $A$, in addition, has also finite perimeter, then $T_A \in \mathscr I_d(\R^d)$. In this case, the current $\partial T_A$ is the natural current associated with the reduced boundary $\partial^*A$ with orientation compatible with Stokes' Theorem (we refer the reader to \cite{AFP} for a comprehensive treatment of the theory of sets of finite perimeter).
A set $A$ of finite perimeter is said to be \textbf{decomposable} if there exist two measurable sets $B,C$ with $A=B \cup C$, with $\mathcal L^d(B \cap C)=0$,  $\mathcal L^d(B)>0$, $\mathcal L^d(C)>0$, and  
$\text{Per}(A) = \text{Per}(B) +\text{Per}(C)$. A set $A$ of finite perimeter is said to be \textbf{indecomposable} if it is not decomposable. It is easily seen that $A$ is indecomposable if and only if $T_A$ is indecomposable.
Further, we recall the following notion: 

\begin{definition}[{\cite[Definition 3]{ACMM} and \cite[Proposition 2.17]{BG}}]
	A set $A\subseteq \R^d$ of finite perimeter is said to be \textbf{simple} if
	\begin{itemize}
		\item[\( (\rm i)\)] either $A=\R^d$;
		\item[\( (\rm ii)\)] or $\mathcal L^d(A)<+\infty$ and both $A,A^c$ are indecomposable.
	\end{itemize}
\end{definition}

We look for a criterion for simple sets in terms of the decomposability of its associated current. Before stating our result, we present the following observation. 

\begin{remark}[Indecomposability of a current and of its boundary]\label{rem:_indec_boundary_current} {\rm If $T \in \mathscr I_d(\R^d)$, the indecomposability of $\partial T $ implies the indecomposability of $T$. Indeed, if $T=U+V$, with $\mathsf N(T)=\mathsf N(U)+\mathsf N(V)$, then we would have $\partial T = \partial U + \partial V$, with $\mathsf N(\partial T)=\mathsf N(\partial U)+\mathsf N(\partial V)$ and this necessarily implies $\partial U = 0$ or $\partial V = 0$. By the Constancy Lemma and finiteness of the mass, either $U = 0$ or $V=0$, whence the indecomposability of $T$ follows. 

\noindent Observe that, except for the particular case mentioned above, there is no other implication between the indecomposability of $T$ and the indecomposability of $\partial T$. In one direction, it is enough to consider the $1$-integral current associated with two loops, which is decomposable but whose boundary is zero (hence indecomposable). Conversely, the $2$-current associated with an annulus in $\R^2$ is indecomposable but its boundary is not. \fr}\end{remark}

We are finally ready to state and prove the following criterion: 

\begin{proposition}\label{prop:simple_iff}
	Let $E \subseteq \R^d$ be a set of finite perimeter and of finite Lebesgue measure. The set $E$ is simple if, and only if, the current $\partial T_E$ is indecomposable.
\end{proposition}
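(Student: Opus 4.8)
The plan is to prove the two implications separately, after the preliminary observation that, since $\partial(\partial T_E)=0$, any decomposition $\partial T_E=R+S$ in the sense of Definition \ref{def:indec_current} automatically consists of \emph{cycles}: indeed Proposition \ref{prop:alt_sum_N} gives $\|\partial R\|+\|\partial S\|=\|\partial(\partial T_E)\|=0$, whence $\partial R=\partial S=0$ and ${\sf N}(R)=\M(R)$, ${\sf N}(S)=\M(S)$. Thus decomposability of $\partial T_E$ means exactly that one can write $\partial T_E=R+S$ with $R,S\in\mathscr I_{d-1}(\R^d)\setminus\{0\}$ cycles and $\M(\partial T_E)=\M(R)+\M(S)$. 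Recalling that $\M(\partial T_A)=\text{Per}(A)$ and that a set and its complement share the same reduced boundary (so that $\partial T_E=-\partial T_{E^c}$), the whole statement reduces to the equivalence: \emph{$E$ or $E^c$ is a decomposable set $\iff$ $\partial T_E$ is a decomposable current.}

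For the implication ``$\partial T_E$ indecomposable $\Rightarrow E$ simple'' I would argue by contraposition. If $E=E_1\sqcup E_2$ with $\text{Per}(E)=\text{Per}(E_1)+\text{Per}(E_2)$ and $\mathcal L^d(E_i)>0$, then $D\1_E=D\1_{E_1}+D\1_{E_2}$ with $|D\1_E|=|D\1_{E_1}|+|D\1_{E_2}|$, which at the level of currents reads $\partial T_E=\partial T_{E_1}+\partial T_{E_2}$ with $\M(\partial T_E)=\M(\partial T_{E_1})+\M(\partial T_{E_2})$ and both summands nonzero (as $E_i\neq\emptyset,\R^d$); hence $\partial T_E$ is decomposable. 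If instead $E^c=F\sqcup G$ is a nontrivial perimeter-additive splitting, the same computation on $\1_{E^c}=\1_F+\1_G$ gives $\partial T_{E^c}=\partial T_F+\partial T_G$ mass-additively, and using $\partial T_E=-\partial T_{E^c}$ we again decompose $\partial T_E$. The only point needing care here is that $F,G$ may have infinite measure, so that $T_F,T_G$ are not finite-mass currents; nonetheless their Gauss--Green boundary currents $\partial T_F,\partial T_G$ (associated with the reduced boundaries $\partial^*F,\partial^*G$, which carry finite $\H^{d-1}$-measure) are well-defined elements of $\mathscr I_{d-1}(\R^d)$, and the additivity of the total variations is exactly the hypothesis $\text{Per}(E^c)=\text{Per}(F)+\text{Per}(G)$.

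For the reverse implication ``$E$ simple $\Rightarrow\partial T_E$ indecomposable'', suppose towards a contradiction that $\partial T_E=R+S$ with $R,S$ nonzero cycles and $\M(\partial T_E)=\M(R)+\M(S)$. Since $\R^d$ is a Banach space, the isoperimetric inequality (Theorem \ref{thm:isoper}) provides a filling of the cycle $R$, and the constancy theorem identifies integral $d$-currents in $\R^d$ with integer-valued $\BV$ functions; moreover the filling is unique, because a finite-mass integral $d$-cycle in $\R^d$ must vanish. Hence there is a unique $v\in\BV(\R^d;\mathbb Z)\cap L^1(\R^d)$ with $\partial T_v=R$, and setting $v':=\1_E-v$ we obtain $\partial T_{v'}=S$, $v+v'=\1_E$, and the mass-additivity becomes $|Dv|(\R^d)+|Dv'|(\R^d)=|D\1_E|(\R^d)=\text{Per}(E)$.

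The heart of the argument, and the step I expect to be the main obstacle, is to turn this ``no cancellation'' relation into rigid information on $v$. Since $D\1_E=Dv+Dv'$ while the total variations add up, one deduces the equality of measures $|Dv|+|Dv'|=|D\1_E|=\H^{d-1}\llcorner\partial^*E$; in particular $|Dv|$ is concentrated on $\partial^*E$ and charges neither the measure-theoretic interior $E^{(1)}$ nor the exterior $E^{(0)}$. By the coarea formula this forces each superlevel set $\{v\geq k\}$ to split $E$ (respectively $E^c$) without creating interface, i.e.\ $\text{Per}(E)=\text{Per}(E\cap\{v\geq k\})+\text{Per}(E\setminus\{v\geq k\})$; the indecomposability of $E$ then makes $v$ a.e.\ constant on $E$, and the indecomposability of $E^c$ makes it a.e.\ constant on $E^c$. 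Since $v\in L^1$ and $\mathcal L^d(E^c)=\infty$, the constant on $E^c$ must vanish, so $v=c\,\1_E$ for some $c\in\mathbb Z$. Finally $R=c\,\partial T_E$ and $S=(1-c)\,\partial T_E$, and the mass-additivity $\text{Per}(E)=(|c|+|1-c|)\,\text{Per}(E)$ with $\text{Per}(E)>0$ forces $c\in\{0,1\}$, i.e.\ $R=0$ or $S=0$, contradicting their nontriviality. The technical work therefore lies in the careful handling of integer $\BV$ functions, the identification of integral $d$-currents with their associated sets, and the measure-theoretic ``constancy on indecomposable components'' argument; these are precisely the places where both halves of the hypothesis ``$E$ and $E^c$ indecomposable'' enter.
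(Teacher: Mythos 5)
Your proposal is correct and follows essentially the same route as the paper: the forward implication is the paper's argument run in contrapositive form (including the key device of passing to the complement to handle the infinite-measure component of a splitting of $E^c$), and the reverse implication is exactly the paper's argument via the isoperimetric filling of the cycles, the Constancy Lemma identification with integer-valued $\BV$ functions, the measure identity $|D\vartheta|+|D\psi|=\H^{d-1}\llcorner\partial^*E$, and constancy on the indecomposable sets $E$ and $E^c$. The only cosmetic difference is that you invoke the coarea formula where the paper cites \cite[Remark 2]{ACMM} for the ``constant on an indecomposable set'' step; the content is the same.
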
 

\begin{proof}
Assume $\partial T_E$ is indecomposable. By Remark \ref{rem:_indec_boundary_current}, the current $T_E$ is indecomposable, hence $E$ is indecomposable. It remains to show that $E^c$ is indecomposable. Let us consider two sets $A,B$ such that $E^c = A \cup B$ with $\text{Per}(E^c) = \text{Per}(A) +\text{Per}(B)$ and $\mathcal L^d(A \cap B)=0$. In view of \cite[Remark 1]{ACMM}, we can assume that $\mathcal L^d(A)<\infty$ and $\mathcal L^d(B)=+\infty$. It can be readily checked that $B^c = E \cup A$ with $\mathcal L^d(E \cap A)=0$, so we can consider the currents $T_{A}, T_{B^c}$ which satisfy $T_{B^c} = T_E + T_A$, whence $\partial T_{E} = \partial T_{B^c} - \partial T_A$.  Since $\text{Per}(E) = \text{Per}(E^c) = \text{Per}(A) +\text{Per}(B) =\text{Per}(A) +\text{Per}(B^c)$, by indecomposability of $\partial T_E$ we deduce either $\partial T_{B^c}=0$ or $\partial T_A=0$ and both cases force $T_A=0$, hence $\mathcal L^d(A)=0$ and the proof is complete. 

Conversely, let us suppose that $E$ is simple and let $\partial T_E = U + V$, for some $U,V \in \mathscr I_{d-1}(\R^d)$ with $\partial U,\partial V=0$ and with 
\begin{equation}\label{eq:additivity}
\M(\partial T_E) = \M(U) + \M(V). 
\end{equation}
By the Isoperimetric Inequality (see Theorem \ref{thm:isoper}) and the Constancy Lemma, one can uniquely determine $X,Y \in \mathscr I_d(\R^d)$ \emph{of finite mass} such that $\partial X = U$ and $\partial Y = V$. By standard facts, $X =  [\![\R^d, e_1 \wedge \ldots \wedge e_d, \vartheta]\!]$ and $Y =  [\![\R^d, e_1 \wedge \ldots \wedge e_d, \psi]\!]$ for some functions $\vartheta, \psi \in \BV(\R^d;\mathbb Z)$. The goal is to show that $D\vartheta = 0$ or $D\psi = 0$. In view of \eqref{eq:additivity}, we can write 
\begin{equation}\label{eq:additivity_as_measures}
\H^{d-1}|_{\partial^*E} = |D\vartheta| + |D\psi|
\end{equation}
as measures on $\R^d$; testing \eqref{eq:additivity_as_measures} on the set $E^{(1)}$ (the set of density points of $E$) we deduce  $|D\vartheta|(E^{(1)}) + |D\psi|(E^{(1)})=0$ and hence, in view of \cite[Remark 2]{ACMM} and of the indecomposability of $E$, we infer $\vartheta$ and $\psi$ are constant in $E$. Arguing similarly on $(E^c)^{(1)}$, we deduce that $\vartheta$ and $\psi$ vanish in $E^c$ (recall that $\mathcal L^d(E)<\infty$ and $\vartheta, \psi \in L^1(\R^d)$). The conclusion is thus $\vartheta = \alpha \1_{E}$ and $\psi = \beta \1_{E}$, for some $\alpha,\beta \in \mathbb Z$. By \eqref{eq:additivity} we must have $|\alpha| + |\beta| = 1$, which forces either $\alpha =0$ or $\beta =0$, which readily implies either $D\vartheta = 0$ or $D\psi = 0$, and the proof is thus completed.
\end{proof}
As a consequence, we deduce the following result about the structure of simple sets in $\R^2$:
\begin{corollary}[{\cite[Theorem 7]{ACMM}}]\label{cor:simple_planar} Let $E \subseteq \R^2$ be a set of finite perimeter and finite measure. Then $E$ is simple if, and only if, its reduced boundary is equal (up to an $\H^1$-negligible subset) to the image of an injective Lipschitz loop.\end{corollary}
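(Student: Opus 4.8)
The plan is to reduce everything to Proposition \ref{prop:simple_iff}, which identifies simplicity of $E$ with indecomposability of the boundary current $\partial T_E$, and then to feed this into the characterisation of indecomposable $1$-currents. Throughout I use that $T_E\in\mathscr I_2(\R^2)$, so that $\partial T_E\in\mathscr I_1(\R^2)$ is a boundaryless integral $1$-current (since $\partial\partial=0$), and that its mass measure is $\|\partial T_E\|=\H^1\llcorner\partial^*E$, with $\M(\partial T_E)=\mathrm{Per}(E)=\H^1(\partial^*E)$. By Proposition \ref{prop:simple_iff} it thus suffices to prove the equivalence: $\partial T_E$ is indecomposable if and only if $\partial^*E$ coincides, up to an $\H^1$-negligible set, with the image of an injective Lipschitz loop.

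For the forward implication, assume $\partial T_E$ is indecomposable (in particular nonzero). By Corollary \ref{cor:charact_indecom_one-curr} we may write $\partial T_E=[\![\gamma]\!]$ for a Lipschitz curve $\gamma$ that is either injective or an injective loop. Since $\partial T_E$ is boundaryless we have $\M(\partial[\![\gamma]\!])=0$, which by the computation of $\partial[\![\gamma]\!]$ recalled in Section \ref{sec:reminder} forces $\gamma$ to be a loop; hence $\gamma$ is an injective loop. Comparing mass measures and using that $\|[\![\gamma]\!]\|=\H^1\llcorner\gamma(\mathbb S^1)$ for injective $\gamma$, we obtain $\H^1\llcorner\partial^*E=\|\partial T_E\|=\H^1\llcorner\gamma(\mathbb S^1)$, which is precisely the statement that $\partial^*E$ equals the image of the injective Lipschitz loop $\gamma$ up to an $\H^1$-negligible set.

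The converse is the more delicate direction, and it is where Proposition \ref{prop:ABC} does the real work. Suppose $\partial^*E=\Gamma$ up to $\H^1$-negligible sets, where $\Gamma=\gamma(\mathbb S^1)$ is the image of an injective Lipschitz loop $\gamma$; reparametrising (which alters neither the current $[\![\gamma]\!]$ nor the image $\Gamma$), we may assume $\gamma$ is parametrised by arc-length. Then $\|\partial T_E\|=\H^1\llcorner\Gamma$ is concentrated on the compact, hence closed, set $\Gamma$, so that $\mathrm{spt}(\partial T_E)\subseteq\Gamma$. Since $\partial T_E$ is boundaryless and supported in the image of the injective loop $\gamma$, Proposition \ref{prop:ABC}(i) yields $\partial T_E=k[\![\gamma]\!]$ for some $k\in\mathbb Z$. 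Comparing masses, the identity $\H^1\llcorner\Gamma=\|\partial T_E\|=|k|\,\H^1\llcorner\Gamma$ forces $|k|=1$, so $\partial T_E=\pm[\![\gamma]\!]$. By Lemma \ref{lem:simple_implies_indec} the current $[\![\gamma]\!]$ is indecomposable, and indecomposability is manifestly invariant under a change of sign; hence $\partial T_E$ is indecomposable, and Proposition \ref{prop:simple_iff} gives that $E$ is simple.

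The main obstacle is exactly the passage, in the converse direction, from the measure-theoretic hypothesis (an $\H^1$-almost-everywhere identification of $\partial^*E$ with a loop image) to a genuine current identity $\partial T_E=\pm[\![\gamma]\!]$: a priori $\partial T_E$ could carry a different integer multiplicity or orientation along $\Gamma$. This gap is closed by first upgrading the $\H^1$-a.e.\ statement to the support inclusion $\mathrm{spt}(\partial T_E)\subseteq\Gamma$ and then invoking the constancy-type Proposition \ref{prop:ABC}(i), after which the mass comparison pins down $|k|=1$. The only minor bookkeeping is the degenerate case $\partial T_E=0$ (equivalently $\mathrm{Per}(E)=0$), which is excluded on the indecomposable side by the convention that indecomposable currents are nonzero.
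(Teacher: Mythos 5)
Your proof is correct and follows exactly the route the paper intends (the paper leaves the argument implicit, stating the corollary as a direct consequence of Proposition \ref{prop:simple_iff} together with the characterisation of indecomposable $1$-currents): you reduce to indecomposability of $\partial T_E$ via Proposition \ref{prop:simple_iff}, use Corollary \ref{cor:charact_indecom_one-curr} for the forward direction, and Proposition \ref{prop:ABC}(i) with a mass comparison plus Lemma \ref{lem:simple_implies_indec} for the converse. The upgrade from the $\H^1$-a.e.\ identification of $\partial^*E$ to the support inclusion and then to $\partial T_E=\pm[\![\gamma]\!]$ is handled correctly.
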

\end{document}